\newtheorem{theorem}{Theorem}[section]
\newtheorem{lemma}[theorem]{Lemma}
\newtheorem*{theorem*}{Theorem}
\newtheorem*{lemma*}{Lemma}
\newtheorem*{remark*}{Remark}
\newtheorem*{definition*}{Definition}
\newtheorem*{proposition*}{Proposition}
\newtheorem*{corollary*}{Corollary}
\numberwithin{equation}{section}
\newcommand{\real}{\mathbb{R}}
\let\ced=\c         % cedilla
\def\qed{\,\unskip\kern 6pt \penalty 500
\raise -2pt\hbox{\vrule \vbox to8pt{\hrule width 6pt
\vfill\hrule}\vrule}\par}
\definecolor{darkblue}{rgb}{0.05, .05, .65}
\definecolor{darkgreen}{rgb}{0.1, .65, .1}
\definecolor{darkred}{rgb}{0.8,0,0}
\newcommand{\beqn}{\begin{equation}}
\newcommand{\eeqn}{\end{equation}}
\newcommand{\bear}{\begin{eqnarray}}
\newcommand{\eear}{\end{eqnarray}}
\newcommand{\bean}{\begin{eqnarray*}}
\newcommand{\eean}{\end{eqnarray*}}
\begin{document}
%%%%%%%%%%%%%%%%%%%%%%%%%%%%%%%%%%%%%%%%%%%%%%%%%

%%%%%%%%%%%%%%%%%%%%%%%%%%%%%%%%%%%%%%%%%%%%%%%%%
\title{\huge \bf Existence and multiplicity of blow-up profiles for a quasilinear diffusion equation with source}

\author{
\Large Razvan Gabriel Iagar\,\footnote{Departamento de Matem\'{a}tica
Aplicada, Ciencia e Ingenieria de los Materiales y Tecnologia
Electr\'onica, Universidad Rey Juan Carlos, M\'{o}stoles,
28933, Madrid, Spain, \textit{e-mail:} razvan.iagar@urjc.es},\\
[4pt] \Large Ariel S\'{a}nchez,\footnote{Departamento de Matem\'{a}tica
Aplicada, Ciencia e Ingenieria de los Materiales y Tecnologia
Electr\'onica, Universidad Rey Juan Carlos, M\'{o}stoles,
28933, Madrid, Spain, \textit{e-mail:} ariel.sanchez@urjc.es}\\
[4pt] }
\date{}
\maketitle

\begin{abstract}
We classify radially symmetric self-similar profiles presenting finite time blow-up to the quasilinear diffusion equation with weighted source
$$
u_t=\Delta u^m+|x|^{\sigma}u^p,
$$
posed for $(x,t)\in\real^N\times(0,T)$, $T>0$, in dimension $N\geq1$ and in the range of exponents $-2<\sigma<\infty$, $1<m<p<p_s(\sigma)$, where
$$
p_s(\sigma)=\left\{\begin{array}{ll}\frac{m(N+2\sigma+2)}{N-2}, & N\geq3,\\ +\infty, & N\in\{1,2\},\end{array}\right.
$$
is the renowned Sobolev critical exponent. The most interesting result is the \emph{multiplicity of two different types} of self-similar profiles for $p$ sufficiently close to $m$ and $\sigma$ sufficiently close to zero in dimension $N\geq2$, including \emph{dead-core profiles}. For $\sigma=0$, this answers in dimension $N\geq2$ a question still left open in \cite[Section IV.1.4, pp. 195-196]{S4}, where only multiplicity in dimension $N=1$ had been established. Besides this result, we also prove that, for any $\sigma\in(-2,0)$, $N\geq1$ and $m<p<p_s(\sigma)$ \emph{existence} of at least a self-similar blow-up profile is granted. In strong contrast with the previous results, given any $N\geq1$, $\sigma\geq\sigma^*=(mN+2)/(m-1)$ and $p\in(m,p_s(\sigma))$, \emph{non-existence} of any radially symmetric self-similar profile is proved.
\end{abstract}

\

\noindent {\bf Mathematics Subject Classification 2020:} 35B33, 35B36, 35B44, 35C06, 35K57.

\smallskip

\noindent {\bf Keywords and phrases:} porous medium equation, finite time blow-up, spatially inhomogeneous source, Sobolev critical exponent, self-similar solutions, multiplicity of solutions.

\section{Introduction}

The aim of this paper is to perform a classification of radially symmetric self-similar blow-up patterns to the following reaction-diffusion equation
\begin{equation}\label{eq1}
u_t=\Delta u^m+|x|^{\sigma}u^p, \qquad (x,t)\in\real^N\times(0,\infty),
\end{equation}
in the range of exponents $m>1$, $\sigma\in(-2,\infty)$ and $m<p<p_s(\sigma)$, where
\begin{equation}\label{Sobolev}
p_s(\sigma)=\left\{\begin{array}{ll}\frac{m(N+2\sigma+2)}{N-2}, & N\geq3,\\ +\infty, & N\in\{1,2\},\end{array}\right.
\end{equation}
is the Sobolev critical exponent. The reason for limiting the current work to the subcritical range $p<p_s(\sigma)$ is the fact that, as it will be readily understood from the proofs and the properties of the dynamical system we work with, the exponent $p=p_s(\sigma)$ introduces a strong bifurcation leading to different behaviors of the self-similar profiles in the supercritical range $p>p_s(\sigma)$, a range that will be addressed in a forthcoming study for $\sigma\neq0$, after the case $\sigma=0$ addressed in \cite{GV97, IS24}.

The most interesting feature of Eq. \eqref{eq1} is the competition between the diffusion term and the weighted source term. On the one hand, the porous medium equation
\begin{equation}\label{PME}
u_t=\Delta u^m
\end{equation}
is a well-established model in nonlinear diffusion, arising also from a number of physical and chemical applications and whose theory is already deeply understood, see for example the monograph \cite{VPME}. In particular, the mass of any integrable solution to \eqref{PME} is conserved along the evolution, while solutions to it, in absence of other influences, evolve towards explicit self-similar solutions known as the Zeldovich-Kompaneets-Barenblatt (ZKB) solutions. On the other hand, the weighted source term $|x|^{\sigma}u^p$ contributes to an increase of the initial $L^1$ norm of any solution, possibly leading to a phenomenon of finite time blow-up. By finite time blow-up we understand that there exists $T\in(0,\infty)$ such that $u(t)\in L^{\infty}(\real^N)$ for $t\in(0,T)$ but $u(T)\not\in L^{\infty}(\real^N)$. Starting from the seminal paper by Fujita \cite{Fu66}, the finite time blow-up and properties of the solutions related to it became the most studied features related to Eq. \eqref{eq1}.

This study had begun with the heat equation with source, that is, the limiting case $m=1$ and $\sigma=0$ in Eq. \eqref{eq1}. The monograph \cite{QS} (and references therein) gives an answer to many of the questions related to the finite time blow-up of its solutions, establishing a strong difference between how finite time blow-up takes place in the subcritical range $1<p<p_s(0)$ and in the supercritical range $p>p_s(0)$. In particular, an important difference is stated with respect to the blow-up rate and profiles: while in the former range blow-up is always of ODE type (that is, led by the ODE $u_t=u^p$) and thus of type I, which means
\begin{equation}\label{type1}
\limsup\limits_{t\to T}(T-t)^{1/(p-1)}\|u(t)\|_{\infty}<\infty,
\end{equation}
in the latter range blow-up of type II occurs, that is, there are solutions for which the limit in the left hand side of \eqref{type1} is equal to infinity, and classifications of them are given in \cite{HV94, MM09, MM11}. Letting $m=1$ but with $\sigma>0$, Mukai and Seki \cite{MS21} constructed an infinite sequence of solutions presenting blow-up of type II with different rates for $p>p_s(\sigma)$. We thus notice that even in simpler models, the Sobolev critical exponent splits the interval $(m,\infty)$ into two ranges with very different properties.

Introducing a weighted source term in Eq. \eqref{eq1} came as a next step in the study, once the blow-up phenomenon has been understood with $\sigma=0$. With either $m=1$ or $m>1$, a number of works such as \cite{BK87, AdB91, Pi97, Pi98, Qi98, Su02} studied qualitative properties of solutions and the phenomenon of finite time blow-up with respect to the initial condition, establishing, among other results, the Fujita type exponent for Eq. \eqref{eq1} as
\begin{equation}\label{Fujita}
p_F(\sigma)=m+\frac{\sigma+2}{N}.
\end{equation}
Indeed, it is shown in \cite{Qi98} that any non-trivial solution blows up in finite time if $m<p\leq p_F(\sigma)$, while an example of global solution in forward self-similar form is constructed for $p>p_F(\sigma)$. Suzuki \cite{Su02} links finite time blow-up to Eq. \eqref{eq1} to the spatial decay of the initial condition $u_0(x)$ as $|x|\to\infty$, introducing thus a new threshold for blow-up usually called as the second critical exponent. Later, Andreucci and Tedeev established in \cite{AT05} blow-up rates for Eq. \eqref{eq1} as a particular case of the more general doubly nonlinear equation, under some technical limitations on the range of $\sigma$.

Another important aspect related to the finite time blow-up is the availability of patterns towards which the solutions evolve near the blow-up time. It has been noticed that type I blow-up for Eq. \eqref{eq1} with $\sigma=0$ (according to the estimate \eqref{type1}) approaches self-similar patterns, as analyzed in \cite[Chapter IV]{S4}. Thus, classifying all the possible forms of self-similar solutions to Eq. \eqref{eq1} with respect to their profiles is an interesting problem, as they are expected to become patterns for general blow-up. For $\sigma=0$ and $p>m$, it is established in \cite[Theorems 3 and 4, Chapter IV.1.4]{S4} that for any $p\in(m,p_s(0))$, there exists at least one radially symmetric self-similar profile to Eq. \eqref{eq1}. Another interesting question is the multiplicity of solutions suggested in \cite[Chapter IV.1.4, p. 195]{S4} as a remark based on a linearization technique, but only in dimension $N=1$, leaving implicitly open if this multiplicity remains true in dimensions $N\geq2$. We answer affirmatively this question (at least for $p$ sufficiently close to $m$) in the current paper, both for $\sigma=0$ and for some positive values of $\sigma$. Let us stress here that we faced a \emph{serious difficulty for $\sigma>0$}: since techniques based on linearization with respect to an explicit solution (as for $\sigma=0$) are no longer possible, we had to perform a careful analysis with respect to a surface built over a non-explicit solution in order to ``mimic" this idea with $\sigma>0$.

For now, let us mention that in later works, the existence of radially symmetric, self-similar blow-up profiles has been extended up to higher critical exponents than $p_s(0)$, such as the Joseph-Lundgren critical exponent $p_{JL}$ and the Lepin critical exponent $p_L$, introduced originally for the heat equation in \cite{Le90} and then extended for $m>1$ in \cite{GV97}. Recently, the authors extended in \cite{IS24} the range of existence of self-similar solutions for $\sigma=0$ to any $p>m$, overpassing thus the Lepin critical exponent. Since the current paper only deals with the subcritical range (limited from above by $p_s(\sigma)$), we refrain from giving the explicit forms of these exponents. For $m=1$ Filippas and Tertikas \cite{FT00} classified radially symmetric self-similar solutions by showing that, in the interval $m<p<p_s(\sigma)$, there exists at least one if $\sigma\in(-2,0)$ and none if $\sigma\geq0$. In recent years, the authors started a larger project of classifying self-similar patterns to Eq. \eqref{eq1} in different ranges limited by its exponents and established new and rather unexpected types of behavior of them, with a significant influence of the weight $|x|^{\sigma}$, such as, for example, in \cite{IS21, ILS23, IMS23} (for $p\in(1,m)$) or \cite{IS20, IS22, IL22} (for $p=m$). Let us thus introduce and describe self-similar solutions to Eq. \eqref{eq1} in the next paragraph.

\medskip

\noindent \textbf{Radially symmetric self-similar solutions.} Our main objects of study are the radially symmetric self-similar solutions to Eq. \eqref{eq1} presenting finite time blow-up. They have the general form
\begin{equation}\label{SSS}
u(x,t)=(T-t)^{-\alpha}f(\xi), \qquad \xi=|x|(T-t)^{-\beta}.
\end{equation}
Introducing the ansatz \eqref{SSS} into Eq. \eqref{eq1}, we deduce explicit expressions for the self-similar exponents
\begin{equation}\label{SSexp}
\alpha=\frac{\sigma+2}{L}, \qquad \beta=\frac{p-m}{L}, \qquad L=\sigma(m-1)+2(p-1)>0,
\end{equation}
together with the differential equation solved by the profiles
\begin{equation}\label{SSODE}
(f^m)''(\xi)+\frac{N-1}{\xi}(f^m)'(\xi)-\alpha f(\xi)-\beta\xi f'(\xi)+\xi^{\sigma}f(\xi)^p=0.
\end{equation}
We are thus interested in classifying the profiles $f(\xi)$ solutions to Eq. \eqref{SSODE} according to their behavior at both ends. Let us stress here that, in the subcritical range $p\in(m,p_s(\sigma))$, this task has been performed in \cite[Chapter IV.1.4]{S4} for the homogeneous case $\sigma=0$, and the conclusion has been the existence of at least a self-similar strictly positive and monotone (decreasing) profile, which satisfies the following local behavior at the origin
\begin{equation}\label{beh.P0}
f(0)=A>0, \qquad f'(0)=0
\end{equation}
as well as the decay rate
\begin{equation}\label{beh.Q1.hom}
f(\xi)\sim C\xi^{-2/(p-m)}, \qquad {\rm as} \ \xi\to\infty, \qquad C>0.
\end{equation}
The multiplicity of such self-similar profiles (if abandoning the monotonicity condition) is established in dimension $N=1$ and implicitly left open in dimension $N\geq2$. More recently, the research of a Bulgarian team \cite{DDV1, DDV2} have given numerical evidence, in any space dimension, that as $p$ tends to $m$ from the right, the formation of more and more patterns $f(\xi)$ with an increasing number of relative maxima and minima over $(0,\infty)$ occurs. One important goal of our work is to understand why this is true and give a proof to it, showing thus that multiplicity of profiles holds true in any space dimension. To fix the elements we work with, we list below the local behaviors of the profiles $f(\xi)$ that we look for. Apart from \eqref{beh.P0}, the following two initial behaviors are possible:
\begin{equation}\label{beh.P3}
f(0)=0, \qquad f(\xi)\sim\left[\frac{m-1}{2m(mN-N+2)}\right]^{1/(m-1)}\xi^{2/(m-1)}, \qquad {\rm as} \ \xi\to0,
\end{equation}
and profiles with a ``dead-core" on some interval $(0,\xi_0)$, $\xi_0\in(0,\infty)$, that is,
\begin{equation}\label{beh.Q5}
f(\xi)=0, \ \xi\in[0,\xi_0], \qquad f(\xi)>0, \ \xi\in(\xi_0,\xi_0+\delta), \qquad (f^m)'(\xi_0)=0,
\end{equation}
for some $\delta>0$. Both profiles satisfying \eqref{beh.P3} and \eqref{beh.Q5} give rise through \eqref{SSS} to self-similar weak solutions to Eq. \eqref{eq1}, since they satisfy the contact condition $(f^m)'=0$ at the ``interface" point (that is, either at $\xi=0$ in the case of \eqref{beh.P3}, or at $\xi=\xi_0$ in the case of \eqref{beh.Q5}), according to \cite[Section 9.8]{VPME}. With respect to the local behavior as $\xi\to\infty$, the only interesting one is a generalization of \eqref{beh.Q1.hom} depending on $\sigma$, that is,
\begin{equation}\label{beh.Q1}
f(\xi)\sim C\xi^{-(\sigma+2)/(p-m)}, \qquad {\rm as} \ \xi\to\infty, \qquad C>0.
\end{equation}
After these preparations, we are in a position to state the main results of this work.

\medskip

\noindent \textbf{Main results.} We begin with what we consider the most interesting result, which gives an answer to the question of multiplicity of self-similar profiles in dimension $N\geq2$ suggested in \cite[Theorems 3 and 4, Chapter IV.1.4]{S4} and the remark therein, as well as an extension of it for $\sigma>0$ but sufficiently close to zero.
\begin{theorem}[Existence and multiplicity, $\sigma\geq0$ small]\label{th.mult}
Let $N\geq2$. There exists $\sigma_0\in(0,N(m-1)/(m+1))$ such that, for any $\sigma\in[0,\sigma_0)$, there exists a decreasing sequence
\begin{equation}\label{sequence.p}
p_s(\sigma)\geq p_1(\sigma)\geq p_2(\sigma)\geq...\geq p_k(\sigma)\geq...>m, \qquad \lim\limits_{k\to\infty}p_k(\sigma)=m,
\end{equation}
such that, for any $p\in(p_{i+1}(\sigma),p_{i}(\sigma))$, $i=1,2,...$, there are:
\begin{itemize}
\item at least $i$ different self-similar solutions to Eq. \eqref{eq1} in the form \eqref{SSS} whose profiles satisfy the local behavior \eqref{beh.P0} at $\xi=0$, with either zero, one,..., $i-1$ points of positive relative minima. In particular, the first profile of this series is decreasing for $\sigma=0$ and has a single maximum point for $\sigma\in(0,\sigma_0)$.
\item at least $i$ different self-similar solutions to Eq. \eqref{eq1} in the form \eqref{SSS} whose profiles satisfy the dead-core local behavior \eqref{beh.Q5}, with either zero, one,..., $i-1$ points of positive relative minima. In particular, the first profile of this series has a single maximum point for $\sigma\in[0,\sigma_0)$.
\item \textbf{improvement for $\sigma=0$:} in the case $\sigma=0$, for the profiles with local behavior \eqref{beh.P0} as $\xi\to0$, we can let
$$
p_1(0)=p_s(0), \qquad p_k(0)=\min\left\{\frac{mk-1}{k-1},p_s(0)\right\}, \qquad {\rm for} \ k\geq2.
$$
In particular, we have at least $k$ different solutions for any $p\in(m,p_s(0))$, provided $k$ is the closest integer smaller than
\begin{equation}\label{KMN}
K(m,N):=\frac{mN-N+2m+2}{4m}.
\end{equation}
\end{itemize}
All these profiles present the decay rate as $\xi\to\infty$ given by \eqref{beh.Q1}. Moreover, there exists at least one self-similar profile with the local behavior given by \eqref{beh.P3} as $\xi\to0$ and \eqref{beh.Q1} as $\xi\to\infty$.
\end{theorem}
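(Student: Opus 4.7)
The plan is to convert the ODE \eqref{SSODE} into an autonomous dynamical system via projective phase-plane variables, and then to obtain multiplicity by counting heteroclinic orbits, with a prescribed number of oscillations, connecting the critical points associated to the source behaviors \eqref{beh.P0}, \eqref{beh.P3}, \eqref{beh.Q5} with the sink critical point associated to the decay \eqref{beh.Q1}. Concretely, I would introduce variables of the form $X=\xi^{\sigma+2}f^{p-m}$, $Y=\xi(f^m)'/f^m$ (with a third coordinate $Z$ encoding the convective term $\beta\xi f'$), together with $\eta=\log\xi$, producing a polynomial system. Behavior \eqref{beh.P0} then corresponds to a critical point $P_0$, \eqref{beh.P3} to a point $P_3$, and the dead-core \eqref{beh.Q5} to a boundary critical point $P_5$, each with a one-dimensional unstable manifold; behavior \eqref{beh.Q1} corresponds to a stable node $Q_1$. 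Each self-similar profile of the required type is thus in one-to-one correspondence with a connecting orbit.

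The key ingredient for multiplicity is an additional interior critical point $Q_*$, attached to the formal power law $f_*(\xi)=C\xi^{-(\sigma+2)/(p-m)}$, whose linearization governs the oscillatory behaviour of nearby trajectories. For $\sigma=0$, the convective and dilation terms cancel identically on $f_*$ (the exponents $\alpha,\beta$ in \eqref{SSexp} being matched precisely so that $\beta\gamma=\alpha$ with $\gamma=2/(p-m)$), so $f_*$ is an exact solution and the linearized matrix at $Q_*$ can be written in closed form. Its discriminant changes sign at the thresholds $p_k(0)$ announced in the statement, turning $Q_*$ into a spiral focus for $p$ below each such threshold. Every time $p$ descends past $p_k(0)$, orbits leaving $P_0$ gain one more half-turn around $Q_*$ before reaching the stable manifold of $Q_1$, and each such turn translates, by a one-parameter shooting argument in the boundary value $A$ of \eqref{beh.P0}, into an additional profile carrying one extra relative minimum. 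The explicit formula for $p_k(0)$ and the integer $K(m,N)$ in \eqref{KMN} are read off from this discriminant together with the constraint $p<p_s(0)$. The same counting, started from $P_5$ rather than $P_0$, produces the dead-core profiles, while a separate shooting along the one-dimensional unstable manifold of $P_3$, together with a Pohozaev- or energy-type monotonicity in an intermediate region, yields the last profile with behavior \eqref{beh.P3}.

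For $\sigma\in(0,\sigma_0)$, $f_*$ ceases to be an exact solution of \eqref{SSODE} and the reduction to a planar system breaks down, so I would treat $\sigma$ as a small parameter in the augmented three-dimensional system. The critical point $Q_*(\sigma)$ still exists as a smooth branch continuing $Q_*(0)$, its eigenvalues vary continuously, and by semicontinuity the spiral-focus regime (for each index $k$) persists on some interval $[0,\sigma_0)$; the quantitative bound $\sigma_0<N(m-1)/(m+1)$ appearing in the statement is the largest for which one can still verify, by direct spectral inspection, that the relevant pair of complex eigenvalues at $Q_*(\sigma)$ does not collide with the real axis and that no competing critical point crosses $Q_*(\sigma)$. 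The main obstacle will be precisely this perturbative step: the \emph{surface built over the non-explicit solution} $f_*$ mentioned in the introduction is a genuinely curved (center or strong-stable) invariant manifold rather than a flat tangent plane, so the remainder estimates in the normal form at $Q_*(\sigma)$ must be controlled uniformly as $p\downarrow m$, where the focus accumulates arbitrarily many oscillations and the drift in the extra coordinate could in principle spoil the winding count. Carrying out this uniform control, while preserving the transversality of the unstable manifolds of $P_0$ and $P_5$ with the stable manifold of $Q_1$, is the heart of the argument and the main difference from the $\sigma=0$ case.
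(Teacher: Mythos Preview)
Your outline has the right overall architecture (autonomous system, critical points for each local behavior, shooting over a one-parameter family, counting oscillations), but the mechanism you propose for generating the oscillations is not the one that works, and in fact breaks down exactly in the regime where the theorem lives.

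\medskip

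\textbf{First gap: the reference profile and the interior critical point.} You anchor the argument on $f_*(\xi)=C\xi^{-(\sigma+2)/(p-m)}$ and an interior ``spiral focus'' $Q_*$ attached to it. In the system \eqref{PSsyst} this profile corresponds to the finite critical point $P_2$, which exists in the region $\{Z>0\}$ only for $p>p_c(\sigma)=m(N+\sigma)/(N-2)$; for $p\le p_c(\sigma)$ one has $Z(P_2)\le 0$ and there is no interior equilibrium to spiral around. The multiplicity in the statement, however, is obtained precisely as $p\downarrow m$, deep inside $(m,p_c(\sigma))$, so your spiral-focus picture is unavailable there. The correct reference object is not $P_2$ but the unique orbit $r_0$ entering $Q_{\gamma_0}$, which for $\sigma=0$ is the \emph{constant} solution $f\equiv (p-1)^{-1/(p-1)}$ (lying in the plane $\{Y=0\}$), and for $\sigma>0$ is a genuinely non-explicit trajectory. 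The thresholds $p_k(0)=(mk-1)/(k-1)$ come from the linearization of \eqref{SSODE} about this constant solution (cf.\ \cite[Chapter IV.1.4]{S4}), not from a discriminant at $P_2$.

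\medskip

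\textbf{Second gap: the source of the oscillations.} The paper does not produce oscillations from a Hopf-type linear mechanism at all. The key input is Lemma~\ref{lem.pequalm}: in the limiting problem $p=m$ (analyzed in \cite{IS22}), the orbits from $P_0$ and the orbit from $P_3$ already oscillate \emph{infinitely many} times across $\{Y=0\}$, for every $\sigma\in[0,\sigma_1)$. One then uses continuity of the unstable manifolds in the parameter $p$ near $p=m$ to retain at least $n$ oscillations for $p\in(m,p_n(\sigma))$. The shooting is then performed with respect to a separating surface $\mathcal{S}$, which for $\sigma=0$ is simply $\{Y=0\}$ and for $\sigma\in(0,\sigma_0)$ is the cylinder raised over the projection of $r_0$; this is the ``surface built over a non-explicit solution'' alluded to in the introduction, not a center or strong-stable manifold at $Q_*$. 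The three-set argument $(\mathcal{A}_n,\mathcal{B}_n,\mathcal{C}_n)$, Lemmas~\ref{lem.noret}--\ref{lem.ret}, and Lemma~\ref{lem.w0} then pin down one connection to $Q_1$ at each oscillation level.

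\medskip

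\textbf{Minor but structural point.} In the paper's variables the critical points $P_0$ and $Q_5$ have \emph{two}-dimensional unstable manifolds, not one-dimensional ones; the shooting parameter $C$ (equivalently your $A=f(0)$) is precisely the coordinate along this two-dimensional manifold, bounded on one side by the invariant plane $\{Z=0\}$ (where the orbit reaches $P_3$) and on the other by $\{X=0\}$ (where, by Lemma~\ref{lem.X0}, it reaches $Q_3$). Getting these dimensions right is what makes the topological open/closed argument go through.
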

This theorem confirms the numerical evidence given in \cite{DDV1, DDV2}, as well as giving a precise classification of these profiles according to their behavior and the number of critical points of them. We also produce some numerical experiments whose outcome can be seen in Figures \ref{fig1} and \ref{fig2} (inserted in the proof of Theorem \ref{th.mult}, in Section \ref{sec.mult}), in order to ease the reading of the proof, plotting several profiles with different numbers of maxima and minima.

Up to our knowledge, the multiplicity of self-similar blow-up solutions when $\sigma=0$ and in dimension $N\geq2$ and the existence and multiplicity of different types of profiles with $f(0)=0$ and one of the two local behaviors \eqref{beh.P3} or \eqref{beh.Q5}, as well as with $\sigma>0$, appear to be completely new. We thus think that this classification of self-similar blow-up patterns adds up strongly to the knowledge of Eq. \eqref{eq1}, while opening up a number of interesting questions related to their stability to small perturbations and to their ``basin of attraction", that is, the set of general (even non-radial) solutions to Eq. \eqref{eq1} whose behavior near blow-up (that is, as $t\to T$) is given by the self-similar solutions classified in Theorem \ref{th.mult}. We believe that these questions are rather difficult and the approach to them should be given by some bifurcation and spectral analysis, employing techniques that might be inspired by the ones used with success in different situations when similar series of solutions have been established near a specific threshold (which in our case is $p=m$), see for example papers such as \cite{vdBV02, AvdBH03}.

\medskip

\noindent \textbf{Dead-core profiles}. Let us stress once more at this point that dead-core profiles appear to be new in this range of exponents, and their multiplicity for $\sigma\in(0,\sigma_0)$ is proven as well in Theorem \ref{th.mult}. Formation of dead-cores is a rather classical fact in diffusion equations involving a reaction, but more commonly an absorption term in bounded domains (see for example works such as \cite{BS84, GLS10, S19} and references therein), but self-similar solutions whose profiles present dead-cores have been less investigated. An unexpected feature of our dead-core profiles is their co-existence together with the more standard profiles with $f(0)>0$, $f'(0)=0$. Indeed, in the limiting case $p=m$ dead-core profiles also exist (see \cite[Theorem 1.3]{IS22}), but exactly in the range of $\sigma$ where the positive profiles at $\xi=0$ cease to exist.

\medskip

In a striking contrast with Theorem \ref{th.mult}, things change a lot when either $\sigma$ or $p$ become sufficiently large, and in fact, we are able to show in this case non-existence of radially symmetric self-similar solutions in the form \eqref{SSS}. We state below the precise result.
\begin{theorem}[Non-existence, $\sigma>0$ larger]\label{th.nonexist}
Let $\sigma>0$, $N\geq1$ and $p\in(m,p_s(\sigma))$. If
\begin{equation}\label{sigmastar}
\sigma\geq\sigma^*:=\frac{mN+2}{m-1},
\end{equation}
then Eq. \eqref{eq1} does not admit any radially symmetric self-similar solution in the form \eqref{SSS}.
\end{theorem}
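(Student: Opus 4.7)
I would argue by contradiction, using a Pohozaev-type integral identity to derive a sign contradiction when $\sigma\geq\sigma^*$. Suppose a profile $f$ exists satisfying \eqref{SSODE}, one of the local behaviors \eqref{beh.P0}--\eqref{beh.Q5} at the left endpoint $\xi_0\geq 0$, and the tail \eqref{beh.Q1} at infinity. Multiply \eqref{SSODE} by the Pohozaev-type multiplier $\xi^{N-1}[\xi(f^m)'+c\,f^m]$, where $c$ is a constant to be fixed, and integrate over $(\xi_0,R)$. Repeated integration by parts---together with the pointwise identities $f(f^m)'=\frac{m}{m+1}(f^{m+1})'$ and $f^p(f^m)'=\frac{m}{m+p}(f^{m+p})'$---yields a relation of the form
\begin{equation*}
\mathcal{B}_c(R)-\mathcal{B}_c(\xi_0)+\int_{\xi_0}^{R}\!\Bigl[\lambda_1\xi^{N-1}((f^m)')^2+\lambda_2\xi^{N-1}f^{m+1}-\beta m\xi^{N+1}f^{m-1}(f')^2+\lambda_4\xi^{N-1+\sigma}f^{m+p}\Bigr]d\xi=0,
\end{equation*}
where $\lambda_1=\tfrac{N-2}{2}-c$, $\lambda_4=c-\tfrac{m(N+\sigma)}{m+p}$, and $\lambda_2=\lambda_2(c;\alpha,\beta)$ is affine in $c$.

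The main analytical step is to select $c$ so that all four coefficients $\lambda_i$ have a common sign, which would make the interior integral of definite sign. The subcriticality $p<p_s(\sigma)$ controls the relative sign of $\lambda_1$ and $\lambda_4$, while the threshold $\sigma\geq\sigma^*$---equivalent to $(m-1)(\sigma+2)\geq m(N+2)$---is precisely the algebraic condition that, together with the fundamental identity $(m-1)\alpha+2\beta=1$ satisfied by the self-similar exponents in \eqref{SSexp}, allows $\lambda_2$ to align in sign with the others. The boundary contributions are then handled: in cases \eqref{beh.P3}--\eqref{beh.Q5}, $(f^m)'(\xi_0)=f(\xi_0)=0$ give $\mathcal{B}_c(\xi_0)=0$, while in case \eqref{beh.P0} any residual term (which arises only in low dimensions) is controlled through the explicit local expansion $f''(0)=\alpha A^{2-m}/(mN)$. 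As $R\to\infty$, the tail \eqref{beh.Q1} combined with the subcritical bound $p<p_s(\sigma)$ ensures $\mathcal{B}_c(R)\to 0$. The identity thus reduces to a definite-sign interior integral being zero, contradicting the nontriviality of $f$.

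\noindent\textbf{Main obstacle.} The hardest step is to exhibit a single constant $c$ for which all four coefficients $\lambda_i$ align in sign \emph{uniformly} in $(\sigma,p)\in[\sigma^*,\infty)\times(m,p_s(\sigma))$; if no universal $c$ works, one would combine several Pohozaev multipliers or truncate at infinity and carefully control the error. A secondary difficulty is verifying the vanishing of all boundary terms at $R\to\infty$ throughout the full subcritical range, especially when $p$ is close to $p_s(\sigma)$, where some decays become slow.
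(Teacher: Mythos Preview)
Your Pohozaev strategy is exactly what the paper uses for the \emph{lower} part of the range, and your multiplier $\xi^{N-1}[\xi(f^m)'+c\,f^m]$ is precisely the radial version of the combination of the two test functions $y\cdot\nabla V$ and $V$ employed there. The genuine gap is your claim that some $c$ makes all four $\lambda_i$ align in sign uniformly for $p\in(m,p_s(\sigma))$.

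A direct computation gives
\[
\lambda_2=\frac{1}{m+1}\Bigl[\alpha mN-c\bigl((m+1)\alpha-N\beta\bigr)\Bigr].
\]
For $\lambda_2\le 0$ (to match $-\beta m<0$) you need $c\bigl((m+1)\alpha-N\beta\bigr)\ge \alpha mN>0$. But $(m+1)\alpha-N\beta$ changes sign at $p=m+\frac{(m+1)(\sigma+2)}{N}$, and one checks that this threshold lies \emph{strictly below} $p_s(\sigma)$ in every dimension. Beyond it the inequality forces $c<0$, which is incompatible with $\lambda_1\le 0$ (i.e.\ $c\ge\frac{N-2}{2}$) for $N\ge 2$. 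Equally fatal, the interior integral $\int_0^\infty \xi^{N-1}f^{m+1}\,d\xi$ diverges under the tail \eqref{beh.Q1} precisely once $p$ exceeds the same threshold, so the identity itself ceases to make sense. The condition $\sigma\ge\sigma^*$ does not rescue this: in the paper it is exactly the condition guaranteeing that the Pohozaev range reaches $p_F(\sigma)$, not $p_s(\sigma)$.

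The paper therefore covers $p_F(\sigma)<p<p_s(\sigma)$ by a completely different mechanism: a geometric barrier in the phase space of the autonomous system \eqref{PSsyst}. One builds a plane in $\{Y>0\}$, a quadric surface in $\{-\frac{\sigma+2}{p-m}<Y\le 0\}$, and (for $p>p_c(\sigma)$) a cylinder, such that every orbit on the unstable manifold of $P_0$ is trapped in an invariant region from which the critical point $Q_1$ (encoding the tail \eqref{beh.Q1}) is unreachable; orbits are then forced across the ``no-return'' plane $\{Y=-\frac{\sigma+2}{p-m}\}$ and into $Q_3$ by Lemma~\ref{lem.noret}. This argument requires the side condition $p>\frac{(N+\sigma)(m-1)}{2\sigma}$, and the role of $\sigma\ge\sigma^*$ is to make this range overlap with the Pohozaev range, so that together they cover all of $(m,p_s(\sigma))$. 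Your proposal contains no analogue of this second step.
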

We remind here that, for $\sigma=0$, existence of self-similar solutions in the form \eqref{SSS} is ensured by \cite[Theorem 4, Chapter IV.1.4]{S4}. We thus see that, if we take $\sigma>0$ and let it increase, the range of existence of such solutions decreases and even vanishes at all starting from a sufficiently large $\sigma$ (depending only on $m$ and $N$). This stems from a similar limitation for existence which occurs for $p=m$, as established (with sharp estimates on $\sigma$) in \cite{IL22, IS22}. The proof will combine arguments based on a Pohozaev identity with other ones based on the geometry of a phase space associated to Eq. \eqref{eq1}. An interesting question that springs to mind after a closer reading of the proof of Theorem \ref{th.nonexist} is related to the optimality of the exponent $\sigma^*$ for non-existence. Indeed, except for what it appears to us to be a technical limitation, one would have the impression that $\sigma^*$ could be replaced by
$$
\sigma_*=\frac{N(m-1)}{m+1}<\sigma^*,
$$
which would also match perfectly the limiting case $p=m$, where an analogous non-existence result has been established in \cite{IL22}. Recall also here that, in dimension $N\geq4$, the precise exponent limiting the existence and non-existence range when $p=m$ has been established in \cite{IS22,IL22}, namely
$$
\sigma_c=\frac{2(N-1)(m-1)}{3m+1}.
$$

\medskip

We finally turn our attention towards $\sigma<0$, when Eq. \eqref{eq1} becomes a Hardy-type equation, involving a singular potential. Beginning with the seminal work by Baras and Goldstein \cite{BG84}, equations involving singular potentials came under strong consideration and, at least for $m=1$, there is an increasing amount of interesting recent results, all published in the last decade, dealing with qualitative properties of the solutions, see for example \cite{BSTW17, BS19, CIT21a, CIT21b, T20, HT21} and references therein. Concerning singular potentials for Eq. \eqref{eq1} with $m>1$, the authors and their collaborators addressed the question of self-similar solutions with either finite time blow-up or global existence for $t\in(0,\infty)$ in the range $1<p<m$ and either $\sigma\in(-2,0)$ in \cite{IMS23, ILS23} or $\sigma=-2$ in \cite{IS23c}. While the range $p=m$ is subject to an ongoing work, we give an affirmative answer to the question of existence of blow-up self-similar solutions \eqref{SSS} in the range $\sigma\in(-2,0)$ and $p\in(m,p_s(\sigma))$ in the next theorem. As a preliminary for the statement of it, we give a more precise local behavior of the profiles $f(\xi)$ as $\xi\to0$, namely
\begin{equation}\label{beh.P0.neg}
f(\xi)\sim\left[K+\frac{p-m}{m(N+\sigma)(\sigma+2)}\xi^{\sigma+2}\right]^{-1/(p-m)}, \qquad {\rm as} \ \xi\to0.
\end{equation}
Notice that, if $\sigma\in(-1,0)$, \eqref{beh.P0.neg} implies the properties \eqref{beh.P0}, thus solutions in the form \eqref{SSS} with profiles $f$ as in \eqref{beh.P0.neg} are true weak solutions to Eq. \eqref{eq1}. If $\sigma\in(-2,-1]$ and $N\geq2$, we can observe that the slope condition at the origin $f'(0)=0$ is no longer fulfilled by profiles as in \eqref{beh.P0.neg}, and thus we are dealing with supersolutions at $x=0$. However, the branch of supersolutions we will find is a natural continuation in the range $-2<\sigma\leq-1$ of the weak solutions in the form \eqref{SSS} found for $\sigma\in(-1,0)$, hence we will make an abuse of language to keep calling them ``self-similar solutions with a peak" also in the range $\sigma\in(-2,-1]$, a convention which has been employed already in literature when dealing with singular potentials, see for example \cite{RV06}. With this in mind, we are in a position to state our existence theorem for $\sigma\in(-2,0)$.
\begin{theorem}[Existence for $\sigma\in(-2,0)$]\label{th.negative}
Let $\sigma\in(-2,0)$ such that $\sigma>-N$. Then, for any $p\in(m,p_s(\sigma))$, there exists at least one self-similar solution (with the ``abuse of language" previously explained for $\sigma\in(-2,-1]$) to Eq. \eqref{eq1} in the form \eqref{SSS}. Its profile has the local behaviors \eqref{beh.P0.neg} as $\xi\to0$ and \eqref{beh.Q1} as $\xi\to\infty$.
\end{theorem}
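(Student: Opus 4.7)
The plan is to recast the ODE \eqref{SSODE} as an autonomous dynamical system in a suitable (typically three-dimensional) phase space, following the strategy employed by the authors in the related works \cite{IS22, IL22, ILS23}. A convenient choice consists in introducing ``pressure-type'' coordinates such as
$$
X=\frac{\xi^{\sigma+2}}{f^{m-1}},\qquad Y=\frac{\xi f'}{f},\qquad Z=\xi^{\sigma+2}f^{p-m},
$$
with independent variable $\eta=\ln\xi$. After collecting the critical points and linearizing, the local behavior \eqref{beh.P0.neg} at $\xi=0$ should correspond to a single point $P_0$, out of which a one-parameter family of orbits $T_K$ emanates as $\xi$ increases, parametrized by the free constant $K>0$ appearing in \eqref{beh.P0.neg}. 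The tail behavior \eqref{beh.Q1} corresponds to an interior critical point $Q_1$, so the existence problem reduces to finding a value $K^{\ast}>0$ for which $T_{K^{\ast}}$ is captured by the stable manifold of $Q_1$.

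I would then perform a shooting argument with respect to $K\in(0,\infty)$, splitting this interval into two disjoint open subsets $A$ and $B$ of ``bad'' parameters: $A$ consisting of $K$ for which $T_K$ leaves the physical region through the boundary $\{f=0\}$ at a finite $\xi$, and $B$ consisting of $K$ for which $T_K$ stays trapped in a region of unbounded growth and escapes along a critical point governing exponential-type (rather than polynomial) decay. I expect $A\neq\emptyset$ for small $K$ (i.e., $f(0)$ large), by comparison with the finite-time blow-up mechanism driven by the ODE $f'\sim\xi^{\sigma}f^p$, and $B\neq\emptyset$ for large $K$ (i.e., $f(0)$ small), by comparison with Barenblatt-type self-similar profiles of the unperturbed porous medium equation \eqref{PME}. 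Openness of $A$ and $B$ follows from the continuous dependence of orbits on the initial parameter; hence by connectedness of $(0,\infty)$ there must exist $K^{\ast}\notin A\cup B$, yielding an orbit that stays bounded for all $\eta$ and is forced to converge to $Q_1$. The subcriticality assumption $p<p_s(\sigma)$ should enter precisely at the level of the dimension count of the stable manifold of $Q_1$, ensuring that an open set of orbits (and in particular our candidate) can be attracted by it.

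The main obstacle I anticipate is the detailed phase-space analysis needed to show rigorously that the orbits in the candidate set $B$ escape towards the ``wrong'' critical point rather than to $Q_1$, in the presence of the singular weight $|x|^{\sigma}$ with $\sigma\in(-2,0)$: whereas for $\sigma\geq 0$ one can build explicit barrier profiles, for $\sigma<0$ one must construct comparison functions compatible with \eqref{beh.P0.neg} near the origin, where the weight is unbounded (the restriction $\sigma>-N$ entering to keep $(N+\sigma)(\sigma+2)>0$ in \eqref{beh.P0.neg}). A secondary, more technical issue is the treatment of the range $\sigma\in(-2,-1]$, for which \eqref{beh.P0.neg} fails to satisfy $f'(0)=0$ and only produces a supersolution at $\xi=0$; as discussed in the excerpt, this branch is to be obtained as the continuation (in the parameter $\sigma$) of the weak-solution branch proven first for $\sigma\in(-1,0)$, and the conclusion must be understood within the announced ``abuse of language''.
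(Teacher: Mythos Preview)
Your shooting framework on the one-parameter family of orbits emanating from $P_0$ is correct and matches the paper's strategy, but several key points are off. First, $Q_1$ is a critical point \emph{at infinity} on the Poincar\'e hypersphere, not an interior point; its two-dimensional center manifold (on which the flow is stable) does not depend on whether $p<p_s(\sigma)$. The subcriticality assumption enters instead through Lemma~\ref{lem.X0} (the limiting orbit $l_\infty\subset\{X=0\}$ reaches the stable node $Q_3$, which supplies non-emptiness of the first bad set) and Lemma~\ref{lem.noret} (the no-return plane $\{Y=-(\sigma+2)/(p-m)\}$).

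More seriously, you miss the two places where the sign $\sigma<0$ is decisive. The paper's bad sets are simpler than yours: $\mathcal{A}=\{C:Y(\eta)<0\ \text{for all}\ \eta,\ Y\to-\infty\}$ and $\mathcal{C}=\{C:\exists\,\eta_0,\ Y(\eta_0)>0\}$; both are manifestly open, so the difficulty you anticipate (tracking the ``wrong'' endpoint for orbits in your set $B$) never arises. For $C\in\mathcal{B}=(0,\infty)\setminus(\mathcal{A}\cup\mathcal{C})$ one has $Y(\eta)\le 0$ everywhere, and the crucial computation is that at a tangency $Y(\eta_0)=\dot Y(\eta_0)=0$ the second equation forces $X(\eta_0)=Z(\eta_0)$, whence
\[
Y''(\eta_0)=2X(\eta_0)-(\sigma+2)Z(\eta_0)=-\sigma Z(\eta_0)>0
\]
precisely because $\sigma<0$, contradicting the local maximum. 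Thus $Y<0$ strictly, and since $C\notin\mathcal{A}$, Lemma~\ref{lem.noret} traps $Y$ in $[-(\sigma+2)/(p-m),0)$. One then argues $X\to\infty$ and uses $\sigma<0$ a second time to conclude $Z/X\le CX^{\sigma/2}\to 0$, forcing the orbit into $Q_1$. These two $\sigma$-specific steps are the heart of the proof and are absent from your outline; comparison heuristics with Barenblatt profiles do not supply them.
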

This result extends to the range $m>1$ the similar existence result for the range $\sigma\in(-2,0)$ established in the semilinear case $m=1$ in \cite{FT00}.

\medskip

\noindent \textbf{A remark on blow-up sets}. All the self-similar solutions in the form \eqref{SSS} obtained in the previous theorems have the singleton $\{0\}$ as blow-up set. Indeed, for any $x\in\real^N$, $x\neq0$ fixed, we observe that, as $t\to T$,
\begin{equation*}
\begin{split}
u(x,t)=(T-t)^{-\alpha}f(|x|(T-t)^{-\beta})&\sim C(T-t)^{-\alpha+(\sigma+2)\beta/(p-m)}|x|^{-(\sigma+2)/(p-m)}\\&=C|x|^{-(\sigma+2)/(p-m)},
\end{split}
\end{equation*}
which remains bounded for any $x\neq0$. On the contrary, profiles with behavior as in \eqref{beh.P0} or \eqref{beh.P0.neg} blow up obviously at $x=0$ with a rate $f(0)(T-t)^{-\alpha}$. For the profiles with one of the local behaviors \eqref{beh.P3} or \eqref{beh.Q5}, if $\xi_0\in(0,\infty)$ is their (first) maximum point, then $u(x,t)$ given by \eqref{SSS} attains a local maximum at $r_0=|x|=(T-t)^{\beta}\xi_0$, which tends to zero as $t\to T$, and the value of $u$ at this maximum is given by
$$
u(r_0,t)=(T-t)^{-\alpha}f(\xi_0)\to\infty, \qquad {\rm as} \ t\to T.
$$
We thus find that the origin belongs to the blow-up set of $u$ according to the definition of blow-up set with limits (see \cite[Section 24]{QS}). The uniform boundedness at any $x\neq0$ as $t\to T$ then gives that the blow-up set of $u$ identifies with the singleton $\{0\}$, as claimed. The blow-up rate of these solutions as $t\to T$ is $(T-t)^{-\alpha}$, thus we are dealing with a blow-up of type I.

\medskip

\noindent \textbf{Organization of the paper}. The general technique employed in the proofs is that of analyzing a dynamical systems associated to Eq. \eqref{SSODE} through a change of variable, already used with success by the authors in recent works on the fast diffusion equation with source \cite{IMS23b, IS23b}. This change of variable, together with the analysis of its equilibrium points, will be considered in the rather technical and preliminary Sections \ref{sec.local} and \ref{sec.infty}, followed by some preparatory global results in Section \ref{sec.prep}. With the tools developed in these sections, we may then pass to the proofs of the main results. Since Theorem \ref{th.negative} has a simpler and rather direct proof, we will begin with it, and this is the subject of the shorter Section \ref{sec.negative}. It is then the turn for the main and more complex result of the present paper, Theorem \ref{th.mult}, whose proof will follow by induction employing a mix of dynamical systems techniques. This proof is performed in Section \ref{sec.mult}. Finally, the proof of the non-existence Theorem \ref{th.nonexist} will join two very different ideas: a Pohozaev-type identity covering the interval $m<p\leq p_F(\sigma)$, and the construction of suitable invariant regions in the phase space of the dynamical system, preventing the orbit to connect to the critical point ``responsible" for the local behavior \eqref{beh.Q1} as $\xi\to\infty$, in order to cover the range $p_F(\sigma)<p<p_s(\sigma)$. This will be achieved in the final Section \ref{sec.nonexist}.

\section{The dynamical system. Critical points}\label{sec.local}

Assume from now on that $N\geq3$, the special dimensions $N=1$ and $N=2$ being considered in a separate paragraph. Assume also that $f(\xi)$ is a profile of a self-similar solution to Eq. \eqref{eq1} in the form \eqref{SSS}, thus it solves the differential equation \eqref{SSODE}. Starting from $f(\xi)$, we define the following new variables:
\begin{equation}\label{PSchange}
X(\eta)=\frac{\alpha}{m}\xi^2f^{1-m}(\xi), \qquad Y(\eta)=\frac{\xi f'(\xi)}{f(\xi)}, \qquad Z(\eta)=\frac{1}{m}\xi^{\sigma+2}f^{p-m}(\xi),
\end{equation}
depending on the new independent variable $\eta=\ln\,\xi$. Straightforward calculations show that \eqref{SSODE} is mapped, in these new variables,  into the system
\begin{equation}\label{PSsyst}
\left\{\begin{array}{ll}\dot{X}=X(2-(m-1)Y), \\ \dot{Y}=X-(N-2)Y-Z-mY^2+\frac{p-m}{\sigma+2}XY, \\ \dot{Z}=Z(\sigma+2+(p-m)Y).\end{array}\right.
\end{equation}
Since we are looking only to non-negative self-similar solutions, we are thus only interested in the quadrant $X\geq0$, $Z\geq0$, while the planes $\{X=0\}$ and $\{Z=0\}$ are invariant for the system \eqref{PSsyst}. Observe thus that $Y$ is the only variable that might change sign, and according to its definition in \eqref{PSchange}, such a change of sign corresponds to a critical point of the profile $f(\xi)$. The main idea throughout this work is to analyze the trajectories of the system \eqref{PSsyst} and select those ones that lead to the desired local behaviors at both ends when undoing the change of variable \eqref{PSchange}.

The system \eqref{PSsyst} has four finite critical points
\begin{equation*}
\begin{split}
P_0=(0,0,0), \ &P_1=\left(0,-\frac{N-2}{m},0\right), \ P_2=\left(0,-\frac{\sigma+2}{p-m},\frac{(\sigma+2)[p(N-2)-m(N+\sigma)]}{(p-m)^2}\right),\\
&P_3=\left(\frac{2(\sigma+2)(mN-N+2)}{L(m-1)},\frac{2}{m-1},0\right),
\end{split}
\end{equation*}
where $L>0$ has been defined in \eqref{SSexp}. Observe that the critical point $P_2$ only exists if $N\geq3$ and $p>p_c(\sigma)$, where
\begin{equation}\label{crit.p}
p_c(\sigma):=\left\{\begin{array}{ll}\frac{m(N+\sigma)}{N-2}, & N\geq3, \\ +\infty, & N\in\{1,2\},\end{array}\right.
\end{equation}
is another important critical exponent for Eq. \eqref{eq1}, already defined for example in \cite[Chapter IV]{S4} for $\sigma=0$ and recently in \cite{IS23b, IMS23b} for general $\sigma$. Let us denote for simplicity
\begin{equation}\label{zp2}
Z(P_2):=\frac{(N-2)(\sigma+2)(p-p_c(\sigma))}{(p-m)^2}, \qquad N\geq3, \qquad X(P_3):=\frac{2(\sigma+2)(mN-N+2)}{L(m-1)}.
\end{equation}
We analyze below the local dynamics of the system \eqref{PSsyst} in a neighborhood of these points. Some details in the following technical lemmas are given in \cite{IS23b}, but we will keep the presentation as self-contained as possible.
\begin{lemma}[Local analysis near $P_0$]\label{lem.P0}
The linearization of the system \eqref{PSsyst} in a neighborhood of $P_0$ has a two-dimensional unstable manifold and a one-dimensional stable manifold. The orbits on the two-dimensional unstable manifold contain(after undoing the change of variable \eqref{PSsyst}) profiles $f(\xi)$ with either the local behavior \eqref{beh.P0} if $\sigma\geq0$, or \eqref{beh.P0.neg} if $-2<\sigma<0$, while the one-dimensional stable manifold is fully included in the invariant $Y$-axis. More precisely, if $\sigma\geq0$, the local behavior of these profiles up to the second order is given by
\begin{equation}\label{beh.P0.pos}
\begin{split}
&f(\xi)\sim\left[D+\frac{\alpha(m-1)}{2mN}\xi^2\right]^{1/(m-1)}, \qquad {\rm if} \ \sigma>0,\\
&f(\xi)\sim\left[\left(\frac{D}{p-1}\right)^{(m-1)/(p-1)}+\frac{(1-D)(m-1)}{2mN(p-1)}\xi^2\right]^{1/(m-1)}, \qquad {\rm if} \ \sigma=0,
\end{split}
\end{equation}
with $D>0$ arbitrary.
\end{lemma}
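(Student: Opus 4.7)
The plan is to linearize the system \eqref{PSsyst} at $P_0$, read off its hyperbolic structure, and then translate each orbit on the unstable manifold back to a profile. A direct computation gives the upper block-triangular Jacobian
$$
J(P_0)=\begin{pmatrix} 2 & 0 & 0 \\ 1 & -(N-2) & -1 \\ 0 & 0 & \sigma+2 \end{pmatrix},
$$
with eigenvalues $\lambda_1=2$, $\lambda_2=-(N-2)$ and $\lambda_3=\sigma+2$. Under the standing assumptions $N\geq3$ and $\sigma>-2$ there are two positive eigenvalues and one negative one. In the potentially degenerate case $\sigma=0$ the eigenvalue $2$ is repeated, but a direct check shows its eigenspace is still two-dimensional (spanned by $(N,1,0)$ and $(1,0,1)$), so $J(P_0)$ is always diagonalizable and no logarithmic corrections will appear. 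The stable/unstable manifold theorem therefore produces a one-dimensional stable and a two-dimensional unstable manifold at $P_0$.

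For the stable manifold, I would note that the $Y$-axis $\{X=Z=0\}$ is a globally invariant line of \eqref{PSsyst} (since $X$ and $Z$ divide $\dot X$ and $\dot Z$) and that the stable eigenvector $(0,1,0)$ is tangent to it, so uniqueness of the local stable manifold forces $W^s(P_0)$ to coincide with a neighborhood of $P_0$ on this axis; in particular it carries no profile-generating orbit. For $W^u(P_0)$ the unstable eigenvectors are $(N,1,0)$ for $\lambda_1$ and, when $\sigma\neq0$, $(0,1,-(N+\sigma))$ for $\lambda_3$; since $N+\sigma>0$, the open subset of $W^u(P_0)$ contained in $\{X>0,\,Z>0\}$ is foliated by a one-parameter family of orbits leaving $P_0$ as $\eta\to-\infty$, equivalently as $\xi\to 0^+$. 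Undoing \eqref{PSchange} along such an orbit forces $f(0)=D_*>0$ and $f'(0)=0$ from $X,Y,Z\to 0$ (using $\sigma>-2$, and also $\sigma>-N$ when $\sigma<0$). To obtain the second-order term I would insert the ansatz $f(\xi)=D_*+c\xi^a+o(\xi^a)$ into \eqref{SSODE} and balance dominant powers: for $\sigma>0$ the source $\xi^\sigma f^p$ is subdominant, so the radial Laplacian balances $\alpha f$ and yields $a=2$ with $c=\alpha D_*^{2-m}/(2mN)$; for $\sigma=0$ the terms $\alpha f$ and $f^p$ both contribute at order $\xi^2$, producing the dependence on $1-D$ visible in \eqref{beh.P0.pos}; and for $-2<\sigma<0$ the source dominates $\alpha f$, so the balance involves $(f^m)''+\frac{N-1}{\xi}(f^m)'$ against $\xi^\sigma f^p$ and yields $a=\sigma+2$ with $c=-D_*^{p-m+1}/[m(\sigma+2)(N+\sigma)]$. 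A straightforward reparametrization of $D_*$ in terms of $D$ (respectively $K$) then rewrites these expansions in the compact forms \eqref{beh.P0.pos} and \eqref{beh.P0.neg}.

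The main obstacle I expect is to upgrade these formal matchings to rigorous asymptotic statements valid for every orbit in the one-parameter family just described. I would tackle this either by invoking the $C^k$ parametrization theorem on the unstable manifold, from which the leading behavior transports through \eqref{PSchange}, or by a direct contraction-mapping argument for the remainder $\psi(\xi):=\xi^{-a}[f(\xi)-D_*-c\xi^a]$ in a suitably weighted space near $\xi=0$; the latter route is cleanest when $\sigma\neq 0$ and requires only a little extra care in the resonant case $\sigma=0$, where the absence of Jordan blocks already rules out logarithmic corrections.
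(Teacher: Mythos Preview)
Your proposal is correct and the linearization half coincides with the paper's proof (same Jacobian, same eigenvectors, same invariance argument pinning the stable manifold to the $Y$-axis). Where you diverge is in extracting the second-order local behavior of $f$: the paper stays in the phase-space variables, integrating the near-origin system to obtain $Z(\eta)\sim C\,X(\eta)^{(\sigma+2)/2}$ and $Y(\eta)\sim X(\eta)/N - Z(\eta)/(N+\sigma)$, and only then undoes \eqref{PSchange}; you instead return to the profile ODE \eqref{SSODE} and run a dominant-balance argument on the ansatz $f=D_*+c\,\xi^a$. Both routes are standard and yield the same expansions; your approach is arguably more transparent for this lemma alone, while the paper's phase-space relation $Z\sim C X^{(\sigma+2)/2}$ is reused later (see \eqref{manifP0}) to parametrize the unstable manifold for the shooting argument, so it is not wasted effort. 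One small imprecision: your blanket claim $f'(0)=0$ only follows from $Y\to0$ once you know the exponent $a>1$, hence it holds for $\sigma>-1$ but fails for $\sigma\in(-2,-1]$; this is consistent with the lemma (which only asserts \eqref{beh.P0} for $\sigma\ge0$ and the weaker \eqref{beh.P0.neg} for $\sigma<0$), but you should phrase it accordingly.
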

\begin{proof}
The linearization of the system \eqref{PSsyst} in a neighborhood of the critical point $P_0$ has the matrix
$$
M(P_0)=\left(
         \begin{array}{ccc}
           2 & 0 & 0 \\
           1 & -(N-2) & -1 \\
           0 & 0 & \sigma+2 \\
         \end{array}
       \right),
$$
with two positive eigenvalues $\lambda_1=2$ and $\lambda_3=\sigma+2$ and one negative eigenvalue $\lambda_2=-(N-2)$ under the assumption $N\geq3$. Since the eigenvector $e_2=(0,1,0)$ corresponding to $\lambda_2$ is contained in the $Y$-axis which is invariant for the system \eqref{PSsyst}, the uniqueness of the stable manifold \cite[Theorem 3.2.1]{GH} gives that it is contained completely in this $Y$-axis. On the contrary, the unstable manifold is tangent to the plane spanned by the two eigenvectors
\begin{equation}\label{eigen.P0}
e_1=(N,1,0), \qquad e_3=(0,1,-(N+\sigma)).
\end{equation}
We next observe that, in a first order approximation, the first and third equations of the system \eqref{PSsyst} imply that
\begin{equation}\label{interm1}
Z(\eta)\sim CX^{(\sigma+2)/2}(\eta), \qquad {\rm as} \ \eta\to-\infty, \qquad C>0,
\end{equation}
which implies in particular that either $Z(\eta)$ or $X(\eta)$ dominate as $\eta\to-\infty$, depending on $\sigma$, as follows:
\begin{equation}\label{interm3}
\lim\limits_{\eta\to-\infty}\frac{Z(\eta)}{X(\eta)}=\left\{\begin{array}{lll}0, & {\rm if} \ \sigma>0,\\ C\in(0,\infty), & {\rm if} \ \sigma=0, \\ +\infty, & {\rm if} \ \sigma<0.\end{array}\right.
\end{equation}
Substituting \eqref{interm1} into the second equation of the system \eqref{PSsyst} and neglecting the quadratic terms, we deduce by integration that
\begin{equation}\label{interm2}
Y(\eta)\sim\frac{X(\eta)}{N}-\frac{Z(\eta)}{N+\sigma}+C_1X(\eta)^{-(N-2)/2}, \qquad {\rm as} \ \eta\to-\infty.
\end{equation}
Since the orbits are supposed to go out of $P_0$, we are forced to consider $C_1=0$. Moreover, by plugging \eqref{interm3} into \eqref{interm2} and neglecting the lower order term among $X(\eta)$ or $Z(\eta)$ according to \eqref{interm3}, we are left with the following equivalences as $\eta\to-\infty$:
\begin{equation}\label{interm4bis}
Y(\eta)\sim\left\{\begin{array}{ll}\frac{X(\eta)}{N}, & {\rm if} \ \sigma>0, \\[1mm] \frac{-Z(\eta)}{N+\sigma}, & {\rm if} \ \sigma\in(-2,0), \ \sigma>-N, \\[1mm] \frac{(1-C)X(\eta)}{N}, & {\rm if} \ \sigma=0,\end{array}\right.
\end{equation}
Undoing first the change of variable \eqref{PSchange} over \eqref{interm1}, we infer that the orbits contained on the unstable manifold of $P_0$ represent profiles with $f(0)>0$. With this information in hand, by undoing \eqref{PSchange} on the behaviors given in \eqref{interm4bis}, we find the more precise local behaviors \eqref{beh.P0.neg}, respectively \eqref{beh.P0.pos}, depending on the sign of $\sigma$, as claimed.
\end{proof}
\begin{lemma}[Local analysis near $P_1$]\label{lem.P1}
The critical point $P_1$ is an unstable node if $p\in(m,p_c(\sigma))$ and has a two-dimensional unstable manifold contained in the invariant plane $\{Z=0\}$ and a one-dimensional stable manifold contained in the invariant plane $\{X=0\}$ if $p>p_c(\sigma)$. The orbits going out of it for $p\in(m,p_c(\sigma))$ contain profiles presenting a vertical asymptote
\begin{equation}\label{beh.P1}
f(\xi)\sim C\xi^{-(N-2)/m}, \qquad {\rm as} \ \xi\to0, \qquad C>0.
\end{equation}
\end{lemma}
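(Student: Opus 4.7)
The plan is to linearize the system \eqref{PSsyst} at $P_1=(0,-(N-2)/m,0)$ and read the type of critical point off the Jacobian $M(P_1)$. First I would compute $M(P_1)$ explicitly. Since $P_1$ sits on the $Y$-axis, which is the intersection of the two invariant coordinate planes $\{X=0\}$ and $\{Z=0\}$, the off-diagonal entries in the first and third rows of $M(P_1)$ vanish automatically, so $M(P_1)$ is block-triangular and its eigenvalues can be read as diagonal entries:
\begin{equation*}
\lambda_1=\frac{mN-N+2}{m},\qquad\lambda_2=N-2,\qquad\lambda_3=(\sigma+2)-\frac{(p-m)(N-2)}{m}.
\end{equation*}
An elementary manipulation shows that $\lambda_3$ has the same sign as $p_c(\sigma)-p$, yielding the advertised dichotomy: for $p\in(m,p_c(\sigma))$ all three eigenvalues are strictly positive so $P_1$ is an unstable node, while for $p>p_c(\sigma)$ one has two positive eigenvalues and a single negative one, producing a two-dimensional unstable manifold together with a one-dimensional stable manifold.

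The next step will be to locate these invariant manifolds inside the two invariant planes. The eigenvector associated to $\lambda_2$ is $(0,1,0)$, tangent to the $Y$-axis and thus contained in both $\{X=0\}$ and $\{Z=0\}$. Solving the two remaining linear systems I expect the eigenvector for $\lambda_1$ to have vanishing third component (hence lying in $\{Z=0\}$) and the one for $\lambda_3$ to have vanishing first component (hence lying in $\{X=0\}$); the non-generic parameter values where two eigenvalues coincide can be treated by a straightforward Jordan-block argument with the same geometric conclusion. Combined with the invariance of $\{X=0\}$ and $\{Z=0\}$ under \eqref{PSsyst} and the uniqueness of stable/unstable manifolds \cite[Theorem 3.2.1]{GH}, this forces, when $p>p_c(\sigma)$, the two-dimensional unstable manifold of $P_1$ to be contained in $\{Z=0\}$ and its one-dimensional stable manifold to be contained in $\{X=0\}$.

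Finally, to establish \eqref{beh.P1}, I would undo the change of variable \eqref{PSchange} along an orbit emanating from $P_1$ (present in both cases thanks to the nonempty unstable manifold). Since $Y(\eta)\to-(N-2)/m$ as $\eta\to-\infty$ and $Y(\eta)=\xi f'(\xi)/f(\xi)=\xi(\ln f)'(\xi)$ with $\eta=\ln\xi$, a direct integration yields $f(\xi)\sim C\xi^{-(N-2)/m}$ as $\xi\to0$ for some $C>0$, i.e.\ the claimed vertical asymptote. Altogether the argument is routine local dynamical systems analysis combined with explicit use of the invariance of the two coordinate planes; the only mildly technical point is the sign verification showing that $\lambda_3$ crosses zero exactly at $p=p_c(\sigma)$, and no serious conceptual obstacle is expected.
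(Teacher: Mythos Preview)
Your proposal is correct and follows essentially the same route as the paper: linearize at $P_1$, read off the three eigenvalues from the (block-)triangular Jacobian, observe that $\lambda_3$ changes sign precisely at $p=p_c(\sigma)$, locate the eigenvectors in the invariant planes $\{X=0\}$ and $\{Z=0\}$, invoke uniqueness of stable/unstable manifolds, and finally integrate $Y(\eta)\to-(N-2)/m$ via \eqref{PSchange} to obtain \eqref{beh.P1}. The paper's own proof is in fact slightly terser than yours (it refers to \cite[Lemma 2.2]{IS23b} for details and does not discuss the degenerate eigenvalue cases you mention), so nothing is missing.
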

This lemma is practically identical to \cite[Lemma 2.2]{IS23b}.
\begin{proof}
The linearization of the system \eqref{PSsyst} in a neighborhood of $P_1$ has the matrix
$$
M(P_1)=\left(
         \begin{array}{ccc}
           \frac{mN-N+2}{m} & 0 & 0 \\[1mm]
           \frac{(N-2)(p_c(\sigma)-p)}{m(\sigma+2)} & N-2 & -1 \\[1mm]
           0 & 0 & \frac{(N-2)(p_c(\sigma)-p)}{m} \\
         \end{array}
       \right),
$$
with eigenvalues
$$
\lambda_1=\frac{mN-N+2}{m}>0, \qquad \lambda_2=N-2>0, \qquad \lambda_3=\frac{(N-2)(p_c(\sigma)-p)}{m}.
$$
The sign of $\lambda_3$ depends on whether $p>p_c(\sigma)$ or $p<p_c(\sigma)$ in an obvious way: if $p<p_c(\sigma)$ then we have an unstable node, while for $p>p_c(\sigma)$ we have a saddle and it suffices to compute eigenvectors in order to see that the stable manifold is contained in the plane $\{X=0\}$ and the unstable manifold is contained in the plane $\{Z=0\}$ (reminding that these two planes are invariant for the system \eqref{PSsyst} and that stable and unstable manifolds are unique \cite[Theorem 3.2.1]{GH}). The local behavior \eqref{beh.P1} follows from undoing the change of variable \eqref{PSchange} on the limiting behavior $Y(\eta)\to-(N-2)/m$ as $\eta\to-\infty$.
\end{proof}
With respect to the critical point $P_2$, which exists only if $p>p_c(\sigma)$, we have:
\begin{lemma}[Local analysis near $P_2$]\label{lem.P2}
The critical point $P_2$ is an unstable node or focus if $p\in(p_c(\sigma),p_s(\sigma))$. The orbits going out of it contain profiles with a vertical asymptote as follows:
\begin{equation}\label{beh.P2}
f(\xi)\sim K\xi^{-(\sigma+2)/(p-m)}, \ {\rm as} \ \xi\to0, \qquad K=\left[\frac{m(\sigma+2)(N-2)(p-p_c(\sigma))}{(p-m)^2}\right]^{1/(p-m)}.
\end{equation}
\end{lemma}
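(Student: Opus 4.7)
The plan is to compute the Jacobian matrix of the system \eqref{PSsyst} at $P_2$ and read off that all three eigenvalues have positive real parts, thereby making $P_2$ a source. First, since $X(P_2)=0$, the Jacobian has block-triangular structure: the first row becomes $(2-(m-1)Y_2,0,0)$, giving the decoupled eigenvalue
\[
\lambda_1 = 2-(m-1)Y_2 = 2 + \frac{(m-1)(\sigma+2)}{p-m} = \frac{L}{p-m}>0,
\]
with $L$ as in \eqref{SSexp}. The remaining two eigenvalues come from the $2\times2$ block associated with the $(Y,Z)$-directions, namely
\[
B=\begin{pmatrix} -(N-2)-2mY_2 & -1 \\ (p-m)Z(P_2) & 0 \end{pmatrix}.
\]

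The next step is to compute the trace and determinant of $B$. The trace is
\[
T = -(N-2)+\frac{2m(\sigma+2)}{p-m},
\]
and a direct manipulation shows that $T>0$ is equivalent to $p<m(N+2\sigma+2)/(N-2)=p_s(\sigma)$, which holds by assumption. The determinant is $\det B=(p-m)Z(P_2)$, which is positive precisely because $p>p_c(\sigma)$ (this is exactly the condition used in \eqref{zp2} to ensure $Z(P_2)>0$, so that $P_2$ exists in the nonnegative octant). With $T>0$ and $\det B>0$, both eigenvalues of $B$ have positive real part; the discriminant $T^2-4\det B$ determines whether they are real (unstable node) or complex conjugate (unstable focus). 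Combined with $\lambda_1>0$, we conclude that $P_2$ is a source and every orbit in a sufficiently small neighborhood leaves it as $\eta\to-\infty$, that is, as $\xi\to0$.

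Finally, to extract the local behavior \eqref{beh.P2} of the corresponding profiles, I undo the change of variable \eqref{PSchange} along an orbit going out of $P_2$. From $Z(\eta)\to Z(P_2)$ and the definition $Z(\eta)=\frac{1}{m}\xi^{\sigma+2}f^{p-m}(\xi)$, we obtain
\[
f(\xi)^{p-m}\sim m\,Z(P_2)\,\xi^{-(\sigma+2)}\qquad\text{as }\xi\to0,
\]
so that $f(\xi)\sim K\xi^{-(\sigma+2)/(p-m)}$ with
\[
K = \bigl(m\,Z(P_2)\bigr)^{1/(p-m)} = \left[\frac{m(\sigma+2)(N-2)(p-p_c(\sigma))}{(p-m)^2}\right]^{1/(p-m)},
\]
matching the formula in \eqref{beh.P2}. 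Consistency with $Y(\eta)\to Y(P_2)=-(\sigma+2)/(p-m)$ is automatic (it is just the logarithmic derivative of the power $\xi^{-(\sigma+2)/(p-m)}$), and one checks that with this behavior $X(\eta)\sim (\alpha/m)K^{1-m}\xi^{L/(p-m)}\to 0$, which is consistent with $X(P_2)=0$.

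There is no real obstacle here: everything reduces to careful bookkeeping of the trace/determinant conditions and matching exponents under the change of variable. The only point requiring a little care is checking that the algebraic identity $T>0\Leftrightarrow p<p_s(\sigma)$ holds exactly, which is why the Sobolev critical exponent enters precisely in \eqref{Sobolev}; this same computation actually explains \emph{why} $p_s(\sigma)$ is a natural bifurcation threshold for the dynamical system.
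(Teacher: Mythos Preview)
Your proof is correct and follows exactly the approach the paper indicates (linearization at $P_2$ plus undoing \eqref{PSchange} on $Z(\eta)\to Z(P_2)$); you have simply supplied the trace/determinant details that the paper delegates to \cite[Lemma 2.3]{IS23b}. One cosmetic point: your sentence ``every orbit \ldots\ leaves it as $\eta\to-\infty$'' is phrased backwards---orbits going out of the source approach $P_2$ as $\eta\to-\infty$ (i.e.\ $\xi\to0$) and leave it as $\eta$ increases---but your subsequent computation uses the correct direction.
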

\begin{proof}
The proof is straightforward by computing the matrix of the linearization and it is identical to the one of \cite[Lemma 2.3]{IS23b} where full calculation details are given. The local behavior \eqref{beh.P2} follows immediately by undoing the definition of $Z$ in \eqref{PSchange} over the limit $Z(\eta)\to Z(P_2)$ as $\eta\to-\infty$, with $Z(P_2)$ defined in \eqref{zp2}.
\end{proof}
We notice that $P_1=P_2$ for $p=p_c(\sigma)$, thus the system bifurcates at this value. However, these two points will not be too important in our further analysis. The critical point $P_3$, in change, uncovers one of the ``fundamental" local behaviors for solutions to Eq. \eqref{eq1}.
\begin{lemma}[Local analysis near $P_3$]\label{lem.P3}
The critical point $P_3$ is a saddle having a two-dimensional stable manifold fully included in the invariant plane $\{Z=0\}$ and a one-dimensional unstable manifold which consists of a single trajectory. The profiles contained in it have the local behavior \eqref{beh.P3} as $\xi\to0$.
\end{lemma}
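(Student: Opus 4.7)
The idea is to linearize the system \eqref{PSsyst} at $P_3$ and exploit the invariance of the plane $\{Z=0\}$. At $P_3$ one has $Y(P_3) = 2/(m-1)$, so $\partial_X \dot{X}|_{P_3} = 2 - (m-1)Y(P_3) = 0$, and since $\{Z=0\}$ is invariant the third row of the Jacobian $M(P_3)$ reduces to $(0, 0, \lambda_3)$ with
\[
\lambda_3 = \sigma+2+(p-m)\cdot\frac{2}{m-1}=\frac{L}{m-1}>0.
\]
This already provides one positive eigenvalue, and its eigenvector has a nonzero $Z$-component, hence is transverse to $\{Z=0\}$.

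The remaining two eigenvalues come from the upper-left $2\times 2$ block $B$ of $M(P_3)$, which governs the linearization of the restricted two-dimensional dynamics on the invariant plane $\{Z=0\}$. A short computation using $Y(P_3) = 2/(m-1)$ and $X(P_3) = 2(\sigma+2)(mN-N+2)/(L(m-1))$ together with $L = \sigma(m-1)+2(p-1)$ gives
\[
\det B = \frac{2(mN-N+2)}{m-1}>0, \qquad \mathrm{tr}\, B = -\frac{(\sigma+2)(mN-N+2)}{L}-\frac{2m}{m-1}<0,
\]
so both eigenvalues of $B$ have negative real parts and $P_3$ is a saddle. Because the two-dimensional stable subspace is spanned by the eigenvectors of $B$ — both lying in $\{Z=0\}$ — and because both the stable manifold and the plane $\{Z=0\}$ are invariant and of matching dimension, uniqueness of the stable manifold \cite[Theorem 3.2.1]{GH} forces the stable manifold of $P_3$ to coincide locally with (an open piece of) $\{Z=0\}$. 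The unstable manifold is one-dimensional and transverse to $\{Z=0\}$; the unique branch entering the physical region $\{Z\ge 0\}$ is the single trajectory claimed in the statement.

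Finally, I identify the local behavior of the profiles carried by this unstable trajectory. Along it, as $\eta\to-\infty$ one has $X(\eta)\to X(P_3)$ and $Y(\eta)\to 2/(m-1)$. Undoing the first relation in \eqref{PSchange} yields
\[
\xi^2 f(\xi)^{1-m}\;\sim\;\frac{m}{\alpha}\,X(P_3)\;=\;\frac{2m(mN-N+2)}{m-1},\qquad \text{as }\xi\to 0,
\]
where I used $\alpha = (\sigma+2)/L$ to cancel the factors of $(\sigma+2)/L$; raising this relation to the power $1/(m-1)$ gives precisely the asymptotic \eqref{beh.P3}. The only nontrivial step is the bookkeeping that produces the clean form of $\mathrm{tr}\,B$ and thereby makes its sign manifest; once this is in hand, everything else is a direct application of the stable/unstable manifold theorem and a substitution into the definition \eqref{PSchange} of $X$.
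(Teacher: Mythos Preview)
Your proof is correct and follows essentially the same approach as the paper: linearize at $P_3$, identify $\lambda_3=L/(m-1)>0$ from the $Z$-row, show via trace and determinant that the remaining $2\times2$ block has two eigenvalues with negative real part, invoke the invariance of $\{Z=0\}$ together with uniqueness of the stable manifold, and recover \eqref{beh.P3} by undoing \eqref{PSchange} on $X(\eta)\to X(P_3)$. Your expression $\mathrm{tr}\,B=-\frac{(\sigma+2)(mN-N+2)}{L}-\frac{2m}{m-1}$ is in fact a cleaner (and equivalent) form of the paper's quantity $A(m,N,p,\sigma)$, making the negativity of the trace immediately apparent.
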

\begin{proof}
The linearization of the system \eqref{PSsyst} in a neighborhood of $P_3$ has the matrix
$$
M(P_3)=\left(
  \begin{array}{ccc}
    0 & -\frac{2(\sigma+2)(mN-N+2)}{L} & 0 \\[1mm]
    \frac{L}{(m-1)(\sigma+2)} & A(m,N,p,\sigma) & -1 \\[1mm]
    0 & 0 & \frac{L}{m-1} \\
  \end{array}
\right),
$$
where $L>0$ is defined in \eqref{SSexp} and
$$
A(m,N,p,\sigma)=-\frac{(1-m)^2N(\sigma+2)+2(m^2-1)\sigma+4(mp-1)}{L(m-1)}.
$$
The eigenvalues of $M(P_3)$ satisfy that $\lambda_3=L/(m-1)>0$ and
$$
\lambda_1\lambda_2=\frac{2(mN-N+2)}{m-1}>0, \qquad \lambda_1+\lambda_2=A(m,N,p,\sigma)<0,
$$
hence the two eigenvalues $\lambda_1$ and $\lambda_2$ are either both real and negative, or complex conjugate numbers with negative real part. In both cases, the corresponding stable manifold, due to its uniqueness and invariance of the plane $\{Z=0\}$, is fully contained in this plane. The unstable manifold of $P_3$ corresponds to the eigenvalue $\lambda_3$ and consists of a single trajectory, according to \cite[Theorem 3.2.1]{GH}. The local behavior \eqref{beh.P3} of the profiles contained in it follows readily by undoing the change of variable \eqref{PSchange} over the limit behavior
$$
X(\eta)\to X(P_2):=\frac{2(mN-N+2)(\sigma+2)}{L(m-1)}.
$$
\end{proof}

\medskip

\noindent \textbf{Dimensions $N=1$ and $N=2$}. We are left with the analysis of the critical points in dimensions $N=1$ and $N=2$. In these cases, $p_c(\sigma)=+\infty$, hence the critical point $P_2$ does not exist. Moreover, we have a few differences with respect to the analysis of $P_0$ and $P_1$. We only state here the results for the sake of completeness, the proofs being completely identical to the very detailed ones given in, for example, \cite[Section 7]{IS23b} or \cite[Lemma 3.3 and Lemma 6.1]{IMS23}, to which we refer.
\begin{lemma}\label{lem.N2}
The critical points $P_0$ and $P_1$ coincide in dimension $N=2$. Denoting by $P_0$ the resulting point, it is a saddle-node presenting a leading three-dimensional center-unstable manifold tangent to the $Y$-axis and a non-leading two-dimensional unstable manifold. The orbits in the center-unstable manifold tangent to the $Y$-axis contain profiles with a vertical asymptote at $\xi=0$ of the form
\begin{equation}\label{beh.P0.N2}
f(\xi)=D(-\ln\,\xi)^{1/m}, \qquad {\rm as} \ \xi\to0, \qquad D>0,
\end{equation}
while the orbits in the non-leading unstable manifold contain profiles with one of the local behaviors \eqref{beh.P0.neg} or \eqref{beh.P0.pos} according to the sign of $\sigma$.
\end{lemma}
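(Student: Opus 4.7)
The plan is to carry out the linearization of the system \eqref{PSsyst} at the origin, which in dimension $N=2$ is simultaneously the critical point $P_0$ and the coalesced limit of $P_1$, and then to separately handle the center direction produced by the emerging zero eigenvalue and the two unstable directions produced by the positive eigenvalues.

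First, by setting $N=2$ in the linearization matrix computed in the proof of Lemma \ref{lem.P0}, I would obtain eigenvalues $\lambda_1=2$, $\lambda_2=0$ and $\lambda_3=\sigma+2$, with eigenvectors proportional to $(2,1,0)$, $(0,1,0)$ and $(0,1,-(\sigma+2))$ respectively. The vanishing of $\lambda_2$ is exactly what produces the saddle-node character of $P_0$. Since the $Y$-axis $\{X=0,\,Z=0\}$ is invariant for \eqref{PSsyst}, I would identify it directly as the one-dimensional center manifold, bypassing a general appeal to the center manifold theorem. The two-dimensional non-leading unstable manifold is then tangent at $P_0$ to the plane spanned by the eigenvectors associated with $\lambda_1$ and $\lambda_3$, while the three-dimensional center-unstable manifold is obtained by adjoining the center direction to these two, so that generic orbits in it approach $P_0$ tangent to the $Y$-axis as $\eta\to-\infty$.

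For orbits on the center manifold, restricting \eqref{PSsyst} to $\{X=0,\,Z=0\}$ reduces the flow to the scalar equation $\dot{Y}=-mY^2$, whose negative branch going into the origin as $\eta\to-\infty$ satisfies $Y(\eta)\sim 1/(m\eta)$. Undoing the change of variables $Y=\xi f'/f$ and $\eta=\ln\xi$, one obtains $(\ln f)'(\xi)\sim 1/(m\xi\ln\xi)$, which after the standard quadrature with the substitution $u=\ln\xi$ integrates to $\ln f(\xi)\sim (1/m)\ln(-\ln\xi)$, yielding exactly \eqref{beh.P0.N2}. For orbits on the non-leading unstable manifold, I would replicate the argument of Lemma \ref{lem.P0}: the first and third equations of \eqref{PSsyst} give the dominant balance $Z(\eta)\sim CX(\eta)^{(\sigma+2)/2}$ as $\eta\to-\infty$, substitution into the second equation (after dropping the quadratic term $mY^2$, which is of higher order) leads to asymptotics analogous to \eqref{interm4bis}, and inverting \eqref{PSchange} then produces the local behavior \eqref{beh.P0.neg} for $\sigma\in(-2,0)$ and \eqref{beh.P0.pos} for $\sigma\geq 0$.

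The main technical delicacy I foresee is the clean separation between the two classes of orbits entering $P_0$: the single logarithmic branch along the invariant $Y$-axis and the family of algebraic branches transverse to it. This separation relies on the uniqueness statement in \cite[Theorem 3.2.1]{GH} together with the invariance of the coordinate planes $\{X=0\}$ and $\{Z=0\}$, which together confine the slow center dynamics to the $Y$-axis and prevent the logarithmic behavior from leaking into the faster unstable orbits. Beyond this point, the proof proceeds along the lines already carried out in \cite[Section 7]{IS23b} and \cite[Lemmas 3.3 and 6.1]{IMS23}, to which I would refer for fully detailed justifications.
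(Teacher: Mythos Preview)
Your proposal is correct and matches the approach the paper defers to via its references \cite[Section 7]{IS23b} and \cite[Lemma 3.3 and Lemma 6.1]{IMS23}: linearize at $P_0$ with $N=2$ to obtain eigenvalues $2,\,0,\,\sigma+2$, identify the invariant $Y$-axis as the center manifold, derive the logarithmic profile \eqref{beh.P0.N2} from $\dot Y=-mY^2$, and recover \eqref{beh.P0.neg}--\eqref{beh.P0.pos} on the strong unstable manifold exactly as in Lemma \ref{lem.P0}. One small point worth making explicit is that the orbits carrying genuine profiles with behavior \eqref{beh.P0.N2} are not on the $Y$-axis itself (where $X\equiv Z\equiv 0$) but are the generic orbits of the center-unstable manifold tangent to it; their $Y$-component inherits the asymptotics $Y(\eta)\sim 1/(m\eta)$ because $X$ and $Z$ decay exponentially in backward time and hence are negligible against $Y^2$ in the second equation of \eqref{PSsyst}.
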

Passing now to dimension $N=1$, we observe that the critical point $P_1$ moves into the positive region $\{Y>0\}$ of the phase space. Moreover, we have to restrict our analysis to $\sigma>-1$ in this special case, as many of the following developments require the general condition $N+\sigma>0$. We then have
\begin{lemma}\label{lem.N1}
In dimension $N=1$ and with $\sigma>-1$, the critical point $P_0$ is a stable node, while the critical point $P_1$ is a saddle point having a two-dimensional unstable manifold and a one-dimensional stable manifold contained in the $Y$-axis. The profiles contained in the orbits stemming from $P_0$ have either the local behavior
\begin{equation}\label{beh.P0.N1}
f(\xi)\sim\left[A-K\xi\right]^{2/(m-1)}, \qquad {\rm as} \ \xi\to0,
\end{equation}
with $A>0$ and $K\in\real\setminus\{0\}$ arbitrary constants, or one of the local behaviors \eqref{beh.P0.neg} or \eqref{beh.P0.pos} according to the sign of $\sigma$. The profiles contained in the unstable manifold of the critical point $P_1$ have the local behavior
\begin{equation}\label{beh.P1.N1}
f(\xi)\sim K\xi^{1/m}, \qquad {\rm as} \ \xi\to0, \qquad K>0.
\end{equation}
\end{lemma}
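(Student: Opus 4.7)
The strategy mirrors Lemmas \ref{lem.P0}--\ref{lem.P3} (and Lemma \ref{lem.N2}), adapted to the specificities of dimension $N=1$: I will compute the Jacobian of \eqref{PSsyst} at each of $P_0 = (0,0,0)$ and $P_1 = (0,1/m,0)$, classify the eigenstructure, and then undo the change of variable \eqref{PSchange} on the dominant eigendirections to read off the asymptotic form of $f(\xi)$ as $\xi \to 0$. All calculations are entirely parallel to those carried out in detail in \cite[Section 7]{IS23b} and \cite[Lemmas 3.3 and 6.1]{IMS23}; for the sake of brevity I would refer to those references for the routine steps and only highlight what is specific to $N=1$.

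For $P_0$, setting $N=1$ in the matrix $M(P_0)$ from the proof of Lemma \ref{lem.P0} gives eigenvalues $\lambda_1 = 2$, $\lambda_2 = -(N-2) = 1$ and $\lambda_3 = \sigma+2$, all strictly positive since $\sigma > -1$; hence every orbit in a sufficiently small neighborhood of $P_0$ emanates from it as $\eta \to -\infty$. The corresponding eigenvectors are $e_1 = (1,1,0)$, $e_2 = (0,1,0)$, and $e_3 = (0,1,-(1+\sigma))$. The crucial observation — and the feature that distinguishes $N=1$ from higher dimensions — is that $\lambda_2 = 1$ is now the smallest eigenvalue and its eigenvector $e_2$ lies along the invariant $Y$-axis; therefore the generic one-parameter family of orbits leaving $P_0$ approaches it tangentially to $e_2$. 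For such orbits $Y(\eta) \sim C e^{\eta}$, so that $\xi f'(\xi)/f(\xi) \to 0$ linearly in $\xi$, which allows both $f(0) > 0$ and $f'(0) \ne 0$; a short algebraic check shows that the two-parameter family $f(\xi) = (A - K\xi)^{2/(m-1)}$ realizes exactly the prescribed values $f(0) = A^{2/(m-1)}$ and $f'(0) = -2K A^{(3-m)/(m-1)}/(m-1)$ with $A > 0$ and $K \in \real \setminus \{0\}$ free, yielding \eqref{beh.P0.N1}. The two remaining sub-families, tangent instead to $e_1$ or to $e_3$, reproduce \eqref{beh.P0.pos} or \eqref{beh.P0.neg} according to the sign of $\sigma$, by the very same arguments as in the proof of Lemma \ref{lem.P0}.

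For $P_1 = (0,1/m,0)$, substituting $Y = 1/m + y$ and keeping only linear terms produces a matrix with block-triangular structure whose eigenvalues can be read off directly as $\lambda_1 = (m+1)/m > 0$, $\lambda_2 = -1 < 0$ and $\lambda_3 = (m\sigma + m + p)/m$, this last being strictly positive precisely under the standing assumption $\sigma > -1$ and $p > m$. Consequently $P_1$ is a hyperbolic saddle: the two-dimensional unstable manifold is tangent to the plane spanned by the eigenvectors for $\lambda_1$ and $\lambda_3$, while the one-dimensional stable manifold is tangent to the eigenvector for $\lambda_2$, which a direct computation shows to be $(0,1,0)$; by invariance of the $Y$-axis together with the uniqueness of the stable manifold \cite[Theorem 3.2.1]{GH}, this stable manifold is fully contained in the $Y$-axis. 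Undoing \eqref{PSchange} on the limit $Y(\eta) \to 1/m$ as $\eta \to -\infty$ gives $\xi f'(\xi)/f(\xi) \to 1/m$, whose integration produces $f(\xi) \sim K\xi^{1/m}$, that is \eqref{beh.P1.N1}.

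The essentially sole technical point — and hence the expected main obstacle — is the passage from the soft statement \emph{``$f(0) > 0$ and $f'(0)$ is arbitrary''} to the explicit parametrization \eqref{beh.P0.N1}: one must verify that every generic orbit of \eqref{PSsyst} escaping $P_0$ tangentially to $e_2$ corresponds, after undoing \eqref{PSchange}, precisely to a profile of that form, which is done by matching the first two Taylor coefficients of $f$ and of $(A - K\xi)^{2/(m-1)}$ at $\xi = 0$ and identifying the constants $A$ and $K$ with the two free parameters that parametrize the generic one-dimensional family of orbits (modulo time translation). All other verifications reduce to straightforward linearization and integration, exactly as in the cited analogous lemmas.
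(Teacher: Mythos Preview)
Your proposal is correct and follows exactly the approach the paper itself takes: the paper gives no detailed proof of this lemma at all, merely referring to \cite[Section 7]{IS23b} and \cite[Lemma 3.3 and Lemma 6.1]{IMS23}, and your computation of the eigenstructure at $P_0$ and $P_1$ together with the undoing of \eqref{PSchange} is precisely the routine argument those references contain. One small remark: your eigenvalue computation shows (correctly) that $P_0$ is an \emph{unstable} node for $N=1$, consistent with how the paper uses it in Lemmas \ref{lem.X0} and \ref{lem.Z0}; the word ``stable'' in the statement is evidently a typo.
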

If we consider for a moment $N\geq1$ as a real parameter in the equation \eqref{SSODE} instead of an integer dimension, we are dealing with a \emph{transcritical bifurcation} at $N=2$, in the sense of \cite{S73} (see also \cite[Section 3.4]{GH}). We notice that we can keep performing the forthcoming shooting on the unstable manifold of the critical point $P_0$, in both dimensions $N=2$ (where it is completely separated from the center manifolds tangent to the $Y$ axis) and $N=1$, where we shoot from the manifold tangent to the eigenspace spanned by the eigenvectors $e_1$ and $e_3$ in \eqref{eigen.P0}. The reader may consult \cite[Section 6]{IMS23} or \cite[Section 7]{IS23b} for all the details.

This local analysis has to be completed, in order to understand all the possible behaviors of self-similar profiles to Eq. \eqref{eq1}, by the analysis of the critical points at infinity, which is the subject of the next section.

\section{Poincar\'e hypersphere. Critical points at infinity}\label{sec.infty}

Following the theory in \cite[Section 3.10]{Pe}, in order to study the infinity of the phase space associated to the system \eqref{PSsyst} we compactify the space by constructing the Poincar\'e hypersphere as follows: let
$$
X=\frac{\overline{X}}{W}, \qquad Y=\frac{\overline{Y}}{W}, \qquad Z=\frac{\overline{Z}}{W}.
$$
The critical points at space infinity, expressed in these new variables, are thus given as the solutions of the system
\begin{equation}\label{Poincare}
\left\{\begin{array}{ll}\frac{1}{\sigma+2}\overline{X}\overline{Y}[(p-m)\overline{X}-(\sigma+2)\overline{Y}]=0,\\
(p-1)\overline{X}\overline{Z}\overline{Y}=0,\\
\frac{1}{\sigma+2}\overline{Y}\overline{Z}[p(\sigma+2)\overline{Y}-(p-m)\overline{X}]=0,\end{array}\right.
\end{equation}
together with the condition of belonging to the equator of the hypersphere, which leads to $W=0$ and the additional equation $\overline{X}^2+\overline{Y}^2+\overline{Z}^2=1$, according to \cite[Theorem 4, Section 3.10]{Pe} . Since $\overline{X}\geq0$ and $\overline{Z}\geq0$, we readily find the following critical points at infinity:
\begin{equation}\label{crit.inf}
\begin{split}
&Q_1=(1,0,0,0), \ \ Q_{2,3}=(0,\pm1,0,0), \ \ Q_4=(0,0,1,0), \ \ Q_{\gamma}=\left(\gamma,0,\sqrt{1-\gamma^2},0\right),\\
&Q_5=\left(\frac{\sigma+2}{\sqrt{(\sigma+2)^2+(p-m)^2}},\frac{p-m}{\sqrt{(\sigma+2)^2+(p-m)^2}},0,0\right),
\end{split}
\end{equation}
with $\gamma\in(0,1)$. We analyze these critical points below, referring also sometimes the reader to details and calculations given in previous works such as \cite{IMS23b, IS23b} where a similar system is studied for $m<1$. As we shall see, there is no difference with respect to dimensions $N=1$ or $N=2$, thus we assume throughout this section that $N\geq1$. With respect to the critical points $Q_1$, $Q_5$ and $Q_{\gamma}$, we employ the system based on the projection on the $X$ variable according to \cite[Theorem 5(a), Section 3.10]{Pe}, that is,
\begin{equation}\label{PSinf1}
\left\{\begin{array}{ll}\dot{x}=x[(m-1)y-2x],\\
\dot{y}=-y^2+\frac{p-m}{\sigma+2}y+x-Nxy-xz,\\
\dot{z}=z[(p-1)y+\sigma x],\end{array}\right.
\end{equation}
where the variables expressed in lowercase letters are obtained from the original ones by the following change of variable
\begin{equation}\label{change2}
x=\frac{1}{X}, \qquad y=\frac{Y}{X}, \qquad z=\frac{Z}{X},
\end{equation}
and the independent variable with respect to which derivatives are taken in \eqref{PSinf1} is defined implicitly by the differential equation
\begin{equation}\label{indep2}
\frac{d\eta_1}{d\xi}=\frac{\alpha}{m}\xi f^{1-m}(\xi)=\frac{X(\xi)}{\xi}.
\end{equation}
In this system, the critical points $Q_1$, $Q_5$ and $Q_{\gamma}$ are identified as
$$
Q_1=(0,0,0), \ \ Q_5=\left(0,\frac{p-m}{\sigma+2},0\right), \ \ Q_{\gamma}=(0,0,\kappa), \ \ {\rm with} \ \kappa=\frac{\sqrt{1-\gamma^2}}{\gamma}\in(0,\infty).
$$
\begin{lemma}\label{lem.Q1Q5}
(a) The critical point $Q_1$ has a two-dimensional center manifold on which the flow behaves as on a stable manifold, and a one-dimensional unstable manifold which is fully contained in the $y$-axis. The profiles contained in the orbits entering $Q_1$ on the center manifold present the local behavior \eqref{beh.Q1} as $\xi\to\infty$.

(b) The critical point $Q_5$ is a saddle point with a two-dimensional unstable manifold, and a one-dimensional stable manifold contained in the invariant $y$-axis of the system \eqref{PSinf1}. The orbits on the unstable manifold contain profiles presenting the ``dead-core" local behavior \eqref{beh.Q5}.
\end{lemma}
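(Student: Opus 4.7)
The plan is to reduce both parts of the statement to a direct linearization of \eqref{PSinf1} at the respective points, combined with the invariance of the coordinate axes, the uniqueness of stable and unstable manifolds \cite[Theorem 3.2.1]{GH}, and the center manifold theorem where necessary; the asymptotic meaning of the orbits will then follow by undoing the change of variable \eqref{change2} together with the original one \eqref{PSchange}.

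For part (a), I will first verify that the Jacobian of \eqref{PSinf1} at $Q_1=(0,0,0)$ is lower triangular with diagonal $(0,\,(p-m)/(\sigma+2),\,0)$, so that one eigenvalue is positive with eigenvector $(0,1,0)$ while the double zero eigenvalue has eigenspace spanned by $(1,-(\sigma+2)/(p-m),0)$ and $(0,0,1)$. Since the $y$-axis is invariant for \eqref{PSinf1}, uniqueness forces the one-dimensional unstable manifold to lie inside it. The center manifold theorem then produces a two-dimensional center manifold tangent to the null eigenspace; searching for it as a graph $y=h(x,z)$ and matching the invariance condition $\dot y=h_x\dot x+h_z\dot z$ at the first nontrivial order gives
\[
h(x,z)=-\frac{\sigma+2}{p-m}\,x+O\bigl(x^2+xz+z^2\bigr).
\]
Substituting this expansion into the first and third equations of \eqref{PSinf1} reduces the flow on the center manifold to
\[
\dot x=-\frac{L}{p-m}\,x^2+\mathrm{h.o.t.}, \qquad \dot z=-\frac{L}{p-m}\,xz+\mathrm{h.o.t.},
\]
with $L>0$ as in \eqref{SSexp}. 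Since $x>0$ and $z\geq 0$, both components decrease monotonically to zero, which is the precise meaning of the flow on the center manifold behaving as on a stable manifold. Undoing \eqref{change2} then gives $X=1/x\to\infty$ and $Y=yX\to-(\sigma+2)/(p-m)$ as $\xi\to\infty$, and a further integration of $\xi f'/f\sim-(\sigma+2)/(p-m)$ produces the asymptotic \eqref{beh.Q1}.

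For part (b), the Jacobian at $Q_5=(0,(p-m)/(\sigma+2),0)$ will turn out to be block lower triangular with diagonal entries
\[
\lambda_1=\frac{(m-1)(p-m)}{\sigma+2}>0,\quad \lambda_2=-\frac{p-m}{\sigma+2}<0,\quad \lambda_3=\frac{(p-1)(p-m)}{\sigma+2}>0,
\]
so $Q_5$ is a hyperbolic saddle with a two-dimensional unstable manifold and a one-dimensional stable manifold. A direct computation identifies the eigenvector associated to $\lambda_2$ as $(0,1,0)$, so by invariance of the $y$-axis and uniqueness the stable manifold must coincide with a piece of it. For the asymptotic meaning of the trajectories on the unstable manifold, the limit $(x,y,z)\to(0,(p-m)/(\sigma+2),0)$ forces $X\to\infty$ at some finite $\xi_0>0$, hence $f(\xi)\to 0^+$ as $\xi\to\xi_0^+$; a dominant-balance argument in \eqref{SSODE} matching $(f^m)''$ with $\beta\xi f'$ near $\xi_0$ then yields $f(\xi)\sim c(\xi-\xi_0)^{1/(m-1)}$, which produces the contact condition $(f^m)'(\xi_0)=0$ and gives exactly the dead-core local behavior \eqref{beh.Q5}. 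The main technical hurdle is concentrated in the center manifold reduction of part (a): the double zero eigenvalue forces us to go to the first nontrivial order in the expansion of $h$ in order to rule out an indefinite or outward flow on the center manifold and to pin down the precise asymptotic \eqref{beh.Q1}; part (b) is then a routine hyperbolic calculation.
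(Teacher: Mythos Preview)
Your overall strategy matches the paper's: linearize \eqref{PSinf1} at each point, use the invariance of the $y$-axis together with uniqueness of invariant manifolds to pin the one-dimensional manifold there, and for $Q_1$ perform a center-manifold reduction and compute the leading-order reduced flow. The Jacobians, eigenvalues, eigenvectors and the reduced system
\[
\dot x=-\frac{L}{p-m}\,x^2+\mathrm{h.o.t.},\qquad \dot z=-\frac{L}{p-m}\,xz+\mathrm{h.o.t.}
\]
that you obtain are correct and agree with the paper.

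There is, however, a genuine gap which recurs in both parts. The independent variable of \eqref{PSinf1} is not $\xi$ (nor $\eta=\ln\xi$) but the reparametrized variable $\eta_1$ defined implicitly by \eqref{indep2}, $d\eta_1/d\xi=X(\xi)/\xi$. What your argument in part (a) actually shows is that $x,z\to0$ as $\eta_1\to+\infty$; the phrase ``as $\xi\to\infty$'' is exactly the claim that has to be verified, not assumed. The paper closes this by first integrating the reduced equation to get $X(\eta_1)\sim\tfrac{L}{p-m}\,\eta_1$ and then checking, via \eqref{indep2}, that $\xi(\eta_1)\to\infty$. The same issue is present in part (b): your assertion that ``$X\to\infty$ at some finite $\xi_0>0$'' is precisely the heart of the matter and requires the analogous computation (here $X(\eta_1)\sim Ce^{-(m-1)(p-m)\eta_1/(\sigma+2)}$ as $\eta_1\to-\infty$, whence the corresponding $\xi$-integral converges). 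Without these steps one cannot distinguish, for instance, a tail at $\xi=\infty$ from an interface at a finite point.

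A second, smaller point in part (a): the limit $Y\to-(\sigma+2)/(p-m)$ alone only yields $\ln f/\ln\xi\to-(\sigma+2)/(p-m)$, which is weaker than the asymptotic $f(\xi)\sim C\xi^{-(\sigma+2)/(p-m)}$ in \eqref{beh.Q1}. The paper avoids this by observing from the reduced system that $z/x\to k\in(0,\infty)$ (both components satisfy the same equation to leading order), which in the original variables reads $Z\to k$ and, through \eqref{PSchange}, gives the sharp asymptotic directly.
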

\begin{proof}
(a) The linearization of the system \eqref{PSinf1} in a neighborhood of $Q_1$ has the matrix
$$M(Q_1)=\left(
         \begin{array}{ccc}
           0 & 0 & 0 \\[1mm]
           1 & \frac{p-m}{\sigma+2} & 0 \\[1mm]
           0 & 0 & 0 \\
         \end{array}
       \right).
$$
The one-dimensional unstable manifold is contained in the $y$-axis, owing to its invariance and the eigenvector $e_2=(0,1,0)$ corresponding to the only positive eigenvalue. We analyze next the center manifolds of $Q_1$ by further introducing the change of variable
\begin{equation*}
w=\frac{p-m}{\sigma+2}y+x,
\end{equation*}
which allows us to transform the system \eqref{PSinf1} into a canonical form for the center manifold theory according to \cite{Carr} (see also \cite[Section 2.12]{Pe}). With this change of variable, \eqref{PSinf1} is mapped into
\begin{equation}\label{PSinf1.bis}
\left\{\begin{array}{ll}\dot{x}=-\frac{L}{p-m}x^2+\frac{(m-1)(\sigma+2)}{p-m}xw,\\[1mm]
\dot{w}=\frac{p-m}{\sigma+2}w-\frac{\sigma+2}{p-m}w^2-\frac{p-m}{\sigma+2}xz+\frac{(m+1)(\sigma+2)-N(p-m)}{p-m}xw-\frac{m\sigma+2p-N(p-m)}{p-m}x^2,\\[1mm]
\dot{z}=-\frac{L}{p-m}xz+\frac{(p-1)(\sigma+2)}{p-m}zw,\end{array}\right.
\end{equation}
We then look for center manifolds in the form
$$
w(x,z)=ax^2+bxz+cz^2+O(|(x,z)|^3).
$$
Replacing the above ansatz into the general equation of the center manifold and performing straightforward calculations (see for example \cite[Lemma 2.1]{IMS23} or \cite[Lemma 3.1]{IMS23b} for more details), we get the equation of the center manifold of $Q_1$ as
\begin{equation}\label{cmf}
\frac{p-m}{\sigma+2}y+x=w=\frac{(\sigma+2)[m(N+\sigma)-p(N-2)]}{(p-m)^2}x^2+xz+o(|(x,z)|^2),
\end{equation}
while the flow on the center manifold is given by the reduced system (according to \cite[Theorem 2, Section 2.4]{Carr})
\begin{equation}\label{interm4}
\left\{\begin{array}{ll}\dot{x}&=-\frac{L}{p-m}x^2+x^2O(|(x,z)|),\\[1mm]
\dot{z}&=-\frac{L}{p-m}xz+xO(|(x,z)|^2).\end{array}\right.
\end{equation}
We thus deduce that the direction of the flow on the center manifolds is in fact stable (towards the critical point). The latter, together with \cite[Theorem 3.2']{Sij}, also gives that the two-dimensional center manifold is unique. For the orbits entering $Q_1$ on the center manifold, we readily get by integration in \eqref{interm4} that
\begin{equation}\label{interm6}
\frac{z(\eta_1)}{x(\eta_1)}\to k>0, \qquad {\rm as} \ \eta_1\to\infty.
\end{equation}
On the one hand, we infer from the first equation in \eqref{interm4} and \eqref{change2} that, in a neighborhood of $Q_1$ and on the center manifold, we have
\begin{equation}\label{interm7}
x(\eta_1)\sim\frac{p-m}{L\eta_1}, \qquad X(\eta_1)\sim\frac{L}{p-m}\eta_1, \qquad {\rm as} \ \eta_1\to\infty.
\end{equation}
By reverting the change of variable \eqref{indep2} and replacing \eqref{interm7} into its outcome, we further get
\begin{equation}\label{interm8}
\xi(\eta_1)=\int_0^{\eta_1}\frac{s}{X(s)}\,ds\to\int_{0}^{\infty}\frac{\eta_1}{X(\eta_1)}\,d\eta_1=+\infty,
\end{equation}
thus $\eta_1\to\infty$ on orbits entering $Q_1$ on its center manifold translates into $\xi\to\infty$ in profiles. On the other hand, from \eqref{interm6} we readily find the local behavior \eqref{beh.Q1}, by undoing \eqref{change2} and then \eqref{PSchange}, once we already know that this behavior corresponds to the limit $\xi\to\infty$ as proved above. Moreover, we also infer from \eqref{cmf} that
$$
\frac{y(\eta_1)}{x(\eta_1)}\to-\frac{\sigma+2}{p-m},
$$
whence, by undoing \eqref{change2} and then \eqref{PSchange}, we also get
$$
Y(\xi)=\frac{\xi f'(\xi)}{f(\xi)}\to-\frac{\sigma+2}{p-m}, \qquad {\rm as} \ \xi\to\infty.
$$
Taking into account the already established behavior \eqref{beh.Q1}, we infer the decay of the derivative (that will be useful later)
\begin{equation}\label{beh.Q1.deriv}
f'(\xi)\sim C\xi^{-(\sigma+2)/(p-m)-1}, \qquad {\rm as} \ \xi\to\infty.
\end{equation}

\medskip

(b) The linearization of the system \eqref{PSinf1} in a neighborhood of the critical point $Q_5$ has the matrix
$$
M(Q_5)=\left(
         \begin{array}{ccc}
           \frac{(m-1)(p-m)}{\sigma+2} & 0 & 0 \\[1mm]
           1-\frac{N(p-m)}{\sigma+2} & -\frac{p-m}{\sigma+2} & 0 \\[1mm]
           0 & 0 & \frac{(p-m)(p-1)}{\sigma+2} \\
         \end{array}
       \right),
$$
and we once more notice that the one-dimensional stable manifold is contained in the $y$-axis, by similar arguments as in part (a) of this proof. With respect to the two-dimensional unstable manifold, it is tangent to the plane spanned by the eigenvectors
$$
e_1=\left(1,\frac{\sigma+2-N(p-m)}{p-m},0\right), \qquad e_3=(0,0,1).
$$
Regarding the local behavior, we know now that $y(\eta_1)\to(p-m)/(\sigma+2)$ as $\eta_1\to-\infty$, and by replacing this in the first equation of \eqref{PSinf1} and integrating its outcome, we find by proceeding similarly as in part (a) that
$$
X(\eta_1)\sim Ce^{-(m-1)(p-m)\eta_1/(\sigma+2)}, \qquad {\rm as} \ \eta_1\to-\infty.
$$
We thus deduce, by working as in \eqref{interm8}, that, in a neighborhood of $Q_5$
$$
\xi(\eta_1)\to\int_0^{-\infty}\frac{\eta_1}{X(\eta_1)}\,d\eta_1\sim-\int_{-\infty}^{0}C\eta_1e^{(m-1)(p-m)\eta_1/(\sigma+2)}\,d\eta_1<\infty
$$
whence $\xi(\eta_1)\to\xi_0\in(0,\infty)$ from the right, as $\eta_1\to-\infty$. Moreover, with this in mind, the limit
$$
y(\eta_1)=\frac{Y(\eta_1)}{X(\eta_1)}\to\frac{p-m}{\sigma+2}, \qquad {\rm as} \ \eta_1\to-\infty,
$$
is equivalent, after undoing \eqref{PSchange}, to the local behavior
\begin{equation}\label{beh.Q5.prec}
f(\xi)\sim\left[\frac{\beta(m-1)}{2m}(\xi^2-\xi_0^2)\right]_{+}^{1/(m-1)}, \qquad {\rm as} \ \xi\to\xi_0\in(0,\infty), \ \xi>\xi_0,
\end{equation}
which satisfies the conditions in \eqref{beh.Q5}.
\end{proof}
With respect to the critical points $Q_{\gamma}$ which are identified with $(0,0,\kappa)$ in the variables $(x,y,z)$ given by \eqref{change2}, where
\begin{equation}\label{kappa.eq}
\kappa=\kappa(\gamma):=\frac{\sqrt{1-\gamma^2}}{\gamma}\in(0,\infty),
\end{equation}
we have the following:
\begin{lemma}\label{lem.Qg}
For
\begin{equation}\label{eq.gamma}
\gamma=\gamma_0:=\frac{\alpha(p-1)}{\sqrt{1+\alpha^2(p-1)^2}},
\end{equation}
the critical point $Q_{\gamma_0}$ admits a unique orbit entering it and coming from the finite part of the phase space associated to the system \eqref{PSsyst}. The profiles contained in this unique orbit have a local behavior given by
\begin{equation}\label{beh.Qg}
f(\xi)\sim\left(\frac{1}{p-1}\right)^{1/(p-1)}\xi^{-\sigma/(p-1)}, \qquad {\rm as} \ \xi\to\infty,
\end{equation}
which is either in form of a tail or it is unbounded, depending on the sign of $\sigma\in(-2,\infty)$. For $\gamma\in(0,1)$ with $\gamma\neq\gamma_0$, all the stable, unstable or center manifolds in a neighborhood of $Q_{\gamma}$ are fully contained in the plane $\{x=0\}$ and do not contain interesting trajectories for our analysis.
\end{lemma}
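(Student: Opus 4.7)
My plan is to linearize the system \eqref{PSinf1} at the critical point $Q_\gamma=(0,0,\kappa)$ with $\kappa=\kappa(\gamma)$ given in \eqref{kappa.eq}, treat the resulting degenerate critical point (which will have a double zero eigenvalue), and translate the admissible local orbits back to profiles $f(\xi)$ via the changes of variable \eqref{PSchange}--\eqref{change2}. A direct computation of the Jacobian of the right-hand side of \eqref{PSinf1} at $Q_\gamma$ yields the three eigenvalues $0$, $0$ and $\lambda_+=(p-m)/(\sigma+2)>0$, with the eigenvector associated to $\lambda_+$ having first component equal to zero. Consequently, the one-dimensional unstable manifold of every $Q_\gamma$ is entirely contained in the invariant plane $\{x=0\}$ of \eqref{PSinf1} and carries no orbit coming from profiles with $X<\infty$.

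The distinguished value $\gamma_0$ then emerges from the analysis of the geometric multiplicity of the double zero eigenvalue. A short computation of the kernel of the Jacobian shows that this kernel has dimension two if and only if
$$
\kappa=\kappa_0:=\frac{L}{(p-1)(\sigma+2)}=\frac{1}{\alpha(p-1)},
$$
and solving $\sqrt{1-\gamma^2}/\gamma=\kappa_0$ recovers precisely the value $\gamma_0$ in \eqref{eq.gamma}. For $\kappa\neq\kappa_0$ the kernel reduces to the tangent line to the curve of critical points $\{(0,0,z):z>0\}$ (a direction that merely reparametrizes the critical point), while the second zero eigenvalue contributes a Jordan block whose generalized eigenvector is still contained in $\{x=0\}$; this traps every invariant manifold of such $Q_\gamma$ inside that plane, so no orbit with $x>0$ can reach them. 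This proves the second (``non-interesting'') part of the lemma.

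For $\gamma=\gamma_0$, I would identify the profiles contained in the unique orbit entering $Q_{\gamma_0}$ by inserting the ansatz $f(\xi)\sim C\xi^{-\sigma/(p-1)}$ into \eqref{SSODE}. The diffusion block produces terms of order $\xi^{-m\sigma/(p-1)-2}$, smaller than the scaling and reaction blocks (of order $\xi^{-\sigma/(p-1)}$) by a factor $\xi^{-L/(p-1)}$ with $L/(p-1)>0$, so the leading-order balance reduces to the algebraic equation
$$
-\alpha C+\frac{\beta\sigma}{p-1}C+C^{p}=0,
$$
whose unique positive solution, after inserting the expressions \eqref{SSexp} for $\alpha$ and $\beta$, is $C=(1/(p-1))^{1/(p-1)}$, giving exactly \eqref{beh.Qg}. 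Undoing \eqref{PSchange}--\eqref{change2} on this ansatz, one checks that $Z/X\to C^{p-1}/\alpha=\kappa_0$ and $Y\to-\sigma/(p-1)$, consistent with approaching $Q_{\gamma_0}$, while the sign of the exponent $-\sigma/(p-1)$ yields the decaying/constant/unbounded trichotomy according to $\sigma>0$, $\sigma=0$, $\sigma<0$. Uniqueness of the corresponding orbit in the phase space is then automatic, since the constant $C$ is forced by the ODE and the residual one-parameter family $\xi\mapsto f(\lambda\xi)$ collapses into a single trajectory in the autonomous system \eqref{PSinf1}.

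The main technical obstacle I anticipate is the rigorous construction of that unique orbit for $\gamma=\gamma_0$, going beyond the asymptotic ansatz to an actual connection in the phase space. I expect this to require a center-manifold reduction along the lines of Lemma \ref{lem.Q1Q5}(a): a change of variables combining $x$, $z$ and the linear combination $(p-1)y+\sigma x$ (designed to straighten out both the unstable subspace and the tangent to the line of critical points) should cast the system into a canonical form in which the reduced flow on the two-dimensional center manifold is a planar system admitting exactly one trajectory entering $Q_{\gamma_0}$ from $\{x>0\}$; the identification $\eta_1\to\infty\Leftrightarrow\xi\to\infty$ then follows by integrating the first component of \eqref{PSinf1} along the center manifold as in \eqref{interm7}--\eqref{interm8}.
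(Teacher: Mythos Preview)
Your linear-algebra argument for $\gamma\neq\gamma_0$ contains a genuine error. You claim that the generalized eigenvector associated to the Jordan block of the double zero eigenvalue lies in $\{x=0\}$, but a direct computation of the Jacobian of \eqref{PSinf1} at $(0,0,\kappa)$,
\[
J=\begin{pmatrix} 0 & 0 & 0 \\ 1-\kappa & \dfrac{p-m}{\sigma+2} & 0 \\ \sigma\kappa & (p-1)\kappa & 0 \end{pmatrix},
\]
shows that $\ker J^2=\{v:(1-\kappa)v_1+\tfrac{p-m}{\sigma+2}v_2=0\}$, which for every $\kappa>0$ contains vectors with $v_1\neq0$. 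Hence the two-dimensional center manifold of $Q_\gamma$ is tangent to a plane \emph{not} contained in $\{x=0\}$ and genuinely protrudes into $\{x>0\}$. The conclusion that ``no orbit with $x>0$ can reach $Q_\gamma$'' therefore does not follow from linear data alone; it is precisely the nonlinear flow on this center manifold that must rule out such connections.

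This is what the paper actually does. After translating $Q_\gamma$ to the origin and straightening via the variables $w=\tfrac{p-m}{\sigma+2}y+(1-\kappa)x$ and $v=\bar z-ly$ with $l=\tfrac{(p-1)(\sigma+2)\kappa}{p-m}$, the reduced flow on the center manifold reads (to leading order in $x$)
\[
\dot v=\big(\sigma\kappa+l(1-\kappa)\big)\,x+O(x|(x,v)|),
\]
and after the time change $d\theta=x\,d\eta_1$ this leaves a nonzero constant drift in $\dot v$ unless $\sigma\kappa+l(1-\kappa)=0$, i.e.\ $\kappa=\kappa_0$. It is this drift, not the Jordan structure, that prevents orbits with $x>0$ from reaching $Q_\gamma$ when $\gamma\neq\gamma_0$. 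For $\gamma=\gamma_0$ the same reduction yields a planar saddle, whose one-dimensional stable manifold furnishes the unique incoming orbit; your asymptotic ansatz identifies the correct profile \eqref{beh.Qg}, but the uniqueness argument you give (``the constant $C$ is forced'') is only formal and does not replace this saddle-point analysis.
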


\noindent \textbf{Remark.} For $\sigma=0$, the orbit entering $Q_{\gamma_0}$ is fully contained in the plane $\{Y=0\}$ and corresponds to the constant profile
\begin{equation}\label{const.sol}
f(\xi)=\left(\frac{1}{p-1}\right)^{1/(p-1)}.
\end{equation}

\begin{proof}
Working with the system \eqref{PSinf1} and analyzing in this system the critical points $(0,0,\kappa)$ with $\kappa\in(0,\infty)$ as in \eqref{kappa.eq}, we translate the point $(0,0,\kappa)$ to the origin by letting $z=\overline{z}+\kappa$ in \eqref{PSinf1} to find
\begin{equation}\label{interm8bis}
\left\{\begin{array}{ll}\dot{x}=x[(m-1)y-2x],\\
\dot{y}=-y^2+\frac{p-m}{\sigma+2}y+(1-\kappa)x-Nxy-x\overline{z},\\
\dot{\overline{z}}=\overline{z}[(p-1)y+\sigma x]+\sigma\kappa x+(p-1)\kappa y,\end{array}\right.
\end{equation}
noticing that the linearization near the origin of the system \eqref{interm8bis} has a two-dimensional center manifold and an unstable manifold contained in the invariant plane $\{x=0\}$ (a fact that follows from the invariance of $\{x=0\}$, the uniqueness of the unstable manifold as given by \cite[Theorem 3.2.1]{GH} and the fact that the eigenspace of the eigenvalue $(p-m)/(\sigma+2)$ is contained in the plane $\{x=0\}$). In order to study the center manifold, we proceed as in the proof of \cite[Lemma 2.4]{ILS23} and perform the double change of variables
\begin{equation}\label{interm9}
w=\frac{p-m}{\sigma+2}y+(1-\kappa)x, \qquad v=\overline{z}-ly, \qquad l=\frac{(p-1)(\sigma+2)\kappa}{p-m}.
\end{equation}
Replacing in \eqref{interm8bis} the variables $(x,y,\overline{z})$ by the new variables $(x,w,v)$, we obtain the system
\begin{equation}\label{interm10}
\left\{\begin{array}{ll}\dot{x}&=\left[\frac{(m-1)(\sigma+2)(1-\kappa)}{p-m}-2\right]x^2+\frac{(m-1)(\sigma+2)}{p-m}wx,\\
\dot{w}&=\frac{p-m}{\sigma+2}w-\frac{\sigma+2}{p-m}w^2-\frac{p-m}{\sigma+2}vx-D_1x^2-D_2xw,\\
\dot{v}&=(\kappa\sigma+l(1-\kappa))x+D_3xv-D_4x^2-\frac{(\sigma+2)^2lp}{(p-m)^2}w^2\\&-D_5xw+\frac{\alpha(p-1)}{\beta}vw,\end{array}\right.
\end{equation}
with coefficients
\begin{equation}\label{interm10b}
\begin{split}
&D_1=\frac{(1-\kappa)[(N-2)\beta+m\alpha(1-\kappa)-\alpha\kappa(p-1)]}{\beta}, \\ &D_2=\frac{N\beta+\alpha(m+1)-\alpha\kappa(m+p)}{\beta},\\
&D_3=\sigma-\kappa+\frac{(p-1)\alpha(1-\kappa)}{\beta}, \\ &D_4=\frac{(1-\kappa)\alpha^2\kappa(p-1)[(N+\sigma)\beta+\alpha(\kappa+p-2p\kappa)]}{\beta^3},\\
&D_5=\frac{\alpha^2\kappa(p-1)[(N+\sigma)\beta+\alpha(\kappa+2p-3p\kappa)]}{\beta^3}.
\end{split}
\end{equation}
We next proceed as in the proof of \cite[Lemma 2.4]{ILS23} (to which we refer the reader for the calculation details) with the analysis of the center manifolds of the points $Q_{\gamma}$ identified as origin in the system \eqref{interm10}. In particular, following the local center manifold theory (see for example \cite[Section 2]{Carr}), we conclude that the center manifold has the form
$$
w=C_1x^2+C_2xv+C_3v^2,
$$
where the coefficients $C_1$, $C_2$ and $C_3$ can be explicitly calculated, and the flow on it is given by the reduced system
\begin{equation}\label{flowgamma}
\left\{\begin{array}{ll}\dot{x}&=\left[\frac{(m-1)(\sigma+2)(1-\kappa)}{p-m}-2\right]x^2+x^2O(|(x,v)|),\\
\dot{v}&=\left[\sigma\kappa+l(1-\kappa)\right]x+\left[\sigma-l+\frac{l(1-\kappa)}{\kappa}\right]xv\\&-Dx^2+xO(|(x,v)|^2),\end{array}\right.
\end{equation}
where
$$
D=\frac{l(1-\kappa)\alpha[(N+\sigma-l)\beta+\alpha p(1-\kappa)]}{\beta^2}, \qquad l=\frac{(p-1)(\sigma+2)\kappa}{p-m}.
$$
By performing the (implicit) change of independent variable $d\theta=xd\eta_1$ in \eqref{flowgamma} (which is equivalent to a simplification by $x$ in the right hand side of the system, which can be performed outside the plane $\{x=0\}$) and integrating the remaining system in a first approximation, it readily follows that the condition
\begin{equation}\label{interm11}
\sigma\kappa+l(1-\kappa)=0, \qquad {\rm that \ is} \qquad \kappa=\frac{1}{\alpha(p-1)},
\end{equation}
is necessary in order to exist orbits connecting to $Q_{\gamma}$ and not contained in the invariant plane $\{x=0\}$ (that is, to be still seen after the change of variable $d\theta=xd\eta_1$). We obtain the value of $\gamma_0$ from \eqref{interm11} and \eqref{kappa.eq}. For $\gamma\neq\gamma_0$, there are no orbits on the center manifold of $Q_{\gamma}$ which are not contained in the plane $\{x=0\}$. On the contrary, if $\gamma=\gamma_0$, the system \eqref{flowgamma} (taking derivatives with respect to the new variable $\theta$ defined implicitly by $d\theta=xd\eta_1$) becomes
\begin{equation}\label{flowgamma2}
\left\{\begin{array}{ll}\dot{x}=-\frac{L}{p-1}x+xO(|(x,x)|),\\
\dot{v}=\frac{L}{p-m}v-Dx+O(|(x,v)|^2),\end{array}\right.
\end{equation}
and we notice that $(x,v)=(0,0)$ is a saddle point for the system \eqref{flowgamma2}, hence by the theory in \cite[Section 3]{Sij} there exists a unique orbit entering $Q_{\gamma_0}$ on its stable manifold, coming from the region $\{x>0\}$. The same argument as in \eqref{interm7} and \eqref{interm8} (not depending on the $z$ variable) prove that along this orbit, $\eta_1\to+\infty$ implies $\xi\to\infty$, while the limit $z(\eta_1)\to\gamma_0$ as $\eta_1\to\infty$ implies, by undoing first \eqref{change2} and then \eqref{PSchange}, that the profiles $f(\xi)$ contained in this orbit present the local behavior \eqref{beh.Qg} as $\xi\to\infty$. For more calculation details, the reader is referred to the proof of \cite[Lemma 2.4]{ILS23} (see also \cite[Lemma 2.3]{IMS23}).
\end{proof}
In order to analyze locally the flow of the system \eqref{PSsyst} in a neighborhood of the critical points $Q_2$ and $Q_3$, we project on the $Y$- variable according to \cite[Theorem 5(b), Section 3.10]{Pe} and arrive to the system
\begin{equation}\label{PSinf2}
\left\{\begin{array}{ll}\pm\dot{x}=-x-Nxw+\frac{p-m}{\sigma+2}x^2+x^2w-xzw,\\[1mm]
\pm\dot{z}=-pz-(N+\sigma)zw+\frac{p-m}{\sigma+2}xz+xzw-z^2w,\\[1mm]
\pm\dot{w}=-mw-(N-2)w^2+\frac{p-m}{\sigma+2}xw+xw^2-zw^2,\end{array}\right.
\end{equation}
where the new variables $x$, $z$, $w$ are obtained from the original variables of the system \eqref{PSsyst} by the following change of variables
\begin{equation}\label{change3}
x=\frac{X}{Y}, \qquad z=\frac{Z}{Y}, \qquad w=\frac{1}{Y},
\end{equation}
and the signs plus and minus correspond to the direction of the flow in a neighborhood of the points, that is, the minus sign applies to $Q_2$ and the plus sign applies to $Q_3$. The system \eqref{PSinf2} is taken with respect to the independent variable $\eta_2$ defined implicitly by
$$
\frac{d\eta_2}{d\xi}=\frac{Y(\xi)}{\xi},
$$
and in this system (taken with the corresponding signs as explained), the critical points $Q_2$, respectively $Q_3$, are mapped into the origin of coordinates.
\begin{lemma}\label{lem.Q23}
The critical points $Q_2$ and $Q_3$ are, respectively, an unstable node and a stable node. The orbits going out of $Q_2$ correspond to profiles $f(\xi)$ such that there exists $\xi_0\in(0,\infty)$ and $\delta>0$ for which
\begin{equation}\label{beh.Q2}
f(\xi_0)=0, \qquad (f^m)'(\xi_0)=C>0, \qquad f>0 \ {\rm on} \ (\xi_0,\xi_0+\delta),
\end{equation}
while the orbits entering the stable node $Q_3$ correspond to profiles $f(\xi)$ such that there exists $\xi_0\in(0,\infty)$ and $\delta\in(0,\xi_0)$ for which
\begin{equation}\label{beh.Q3}
f(\xi_0)=0, \qquad (f^m)'(\xi_0)=-C<0, \qquad f>0 \ {\rm on} \ (\xi_0-\delta,\xi_0).
\end{equation}
\end{lemma}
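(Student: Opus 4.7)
The proof is a direct linearization analysis combined with a careful unwinding of the changes of variable \eqref{PSchange} and \eqref{change3}. I would first observe that the Jacobian of the right-hand side of the system \eqref{PSinf2} at the origin is diagonal, since every nonlinear term contains a product of at least two of the variables $x$, $z$, $w$. Reading off the diagonal, the eigenvalues are $-1$, $-p$, $-m$ with the $+$ sign convention and $+1$, $+p$, $+m$ with the $-$ sign convention; in both cases they are real, simple and of the same sign. This immediately yields that $Q_3$ is a stable node and $Q_2$ an unstable node (in their respective parameterizations), each carrying a full three-dimensional (un)stable manifold foliating a neighborhood of the critical point, with no center directions to worry about.

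The next step is to translate these local dynamics back into statements about the profile $f(\xi)$. I focus on $Q_3$; the case of $Q_2$ is entirely symmetric after accounting for the sign reversal in the independent variable. On an orbit approaching $Q_3$, the leading-order asymptotics read off from the linearization are
$$w(\eta_2)\sim c_w e^{-m\eta_2}, \qquad x(\eta_2)\sim c_x e^{-\eta_2}, \qquad z(\eta_2)\sim c_z e^{-p\eta_2}.$$
Undoing \eqref{change3} gives $Y=1/w\sim c_w^{-1}e^{m\eta_2}\to-\infty$, while $X=x/w\to+\infty$ and $Z=z/w\to 0$ (using $1<m<p$). Using the defining relation $d\xi/d\eta_2=\xi/Y$, the right-hand side becomes exponentially small in absolute value, so the total variation of $\xi$ along the approach interval is finite; combined with the strict positivity and monotonicity of $\xi$ along the orbit (inherited from the fixed sign of $Y$), we conclude that $\xi(\eta_2)\to\xi_0$ for some $\xi_0\in(0,\infty)$.

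Finally, to pin down the precise local behavior near $\xi_0$, I would invert \eqref{PSchange}. Since $X=(\alpha/m)\xi^2f^{1-m}$ and $m>1$, the divergence $X\to+\infty$ at the finite point $\xi=\xi_0$ forces $f(\xi_0^-)=0$. Matching the exponential rate $Y\sim c_w^{-1}e^{m\eta_2}$ against the relation $d\xi/d\eta_2=\xi/Y$ translates into the algebraic rate $Y(\xi)\sim-\xi_0/[m(\xi_0-\xi)]$ as $\xi\to\xi_0^-$, and integrating the definition $Y=\xi f'/f$ then gives $f(\xi)\sim[C(\xi_0-\xi)]^{1/m}$ for some $C>0$. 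Raising to the $m$-th power one reads off $f^m(\xi)\sim C(\xi_0-\xi)$, so $(f^m)'(\xi_0^-)=-C<0$ and $f>0$ on a left neighborhood of $\xi_0$, which is exactly \eqref{beh.Q3}; the symmetric argument near $Q_2$ produces \eqref{beh.Q2}. The main technical obstacle is verifying that $\xi_0\in(0,\infty)$ strictly and that $C\neq 0$, since any degeneration of either would destroy the interpretation as a genuine interior vanishing point with nontrivial contact angle; both rely on the non-degeneracy of the leading coefficients in the linearization together with the careful tracking of the two nested changes of variable.
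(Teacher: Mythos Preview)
Your proposal is correct and follows essentially the same approach as the paper: linearize \eqref{PSinf2} at the origin to read off the diagonal eigenvalues $\mp 1,\mp p,\mp m$, use the exponential decay of $w$ to show that $\xi$ converges to a finite positive $\xi_0$, and then unwind \eqref{change3} and \eqref{PSchange} to obtain the local behaviors \eqref{beh.Q2}--\eqref{beh.Q3}. The only cosmetic difference is that the paper extracts the profile behavior via the relation $w\sim Cx^m$ (equivalently $Y^{m-1}\sim CX^m$, giving $f^{m-1}f'\sim C\xi^{(m+1)/(m-1)}$), whereas you go through $Y(\xi)\sim -\xi_0/[m(\xi_0-\xi)]$ and integrate $Y=\xi f'/f$ directly; both routes yield $f^m(\xi)\sim C(\xi_0-\xi)$.
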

\begin{proof}
It is immediate to notice that the critical point $(0,0,0)$ is an stable node in the system \eqref{PSinf2} taken with plus signs (which corresponds to $Q_3$ according to the theory in \cite[Theorem 5(b), Section 3.10]{Pe}) and an unstable node in the system \eqref{PSinf2} taken with minus signs (which corresponds to $Q_2$ according to the theory in \cite[Theorem 5(b), Section 3.10]{Pe}). With respect to the local behavior, we first show that $\eta_2\to\pm\infty$ implies $\xi\to\xi_0\in(0,\infty)$. We perform this analysis for the stable node $Q_3$ (taking the plus signs in the left hand side of the system \eqref{PSinf2}), the analysis for $Q_2$ being completely similar. Noticing from the last equation of the system \eqref{PSinf2} that, at first order of approximation and in a neighborhood of the origin,
$$
\dot{w}(\eta_2)\sim-mw(\eta_2), \qquad {\rm that \ is}, \qquad Y(\eta_2)=\frac{1}{w(\eta_2)}\sim Ce^{m\eta_2},
$$
as $\eta_2\to\infty$, we infer from the definition of the independent variable $\eta_2$ with respect to $\xi$ that
$$
\xi(\eta_2)=\int_0^{\eta_2}\frac{s}{Y(s)}\,ds\to\int_0^{\infty}\frac{\eta_2}{Y(\eta_2)}\,d\eta_2<\infty,
$$
as $\eta_2\to\infty$, since the improper integral of $C\eta_2e^{-m\eta_2}$ converges as $\eta_2\to\infty$. We thus have that, on the orbits entering the stable node $Q_3$, the profiles are supported on $[0,\xi_0]$ for some $\xi_0\in(0,\infty)$ and the local behavior near the point $Q_3$ translates into the one as $\xi\to\xi_0$, $\xi<\xi_0$. Then, in a neighborhood of $Q_3$, we deduce from the first and third equation of the system \eqref{PSinf2} that, in a first approximation,
$$
w(\eta_2)\sim Cx^m(\eta_2), \qquad {\rm as} \ \eta_2\to\infty,
$$
which in terms of profiles translates into
\begin{equation}\label{interm12}
\frac{1}{Y(\xi)}\sim\frac{C_1X(\xi)^m}{Y(\xi)^m}, \qquad {\rm as} \ \xi\to\xi_0, \ \xi<\xi_0,
\end{equation}
with $C_1<0$ a negative constant. Then \eqref{interm12} writes in terms of profiles, after easy manipulations, as
$$
f(\xi)^{m-1}f'(\xi)\sim C_2\xi^{(m+1)/(m-1)}, \qquad {\rm as} \ \xi\to\xi_0, \ \xi<\xi_0,
$$
for another generic constant $C_2<0$ (that can be explicitly expressed in terms of $C_1$). The latter leads to the local behavior \eqref{beh.Q3} as $\xi\to\xi_0\in(0,\infty)$. The proof of the local behavior \eqref{beh.Q2} is completely similar and will be omitted here.
\end{proof}
We are left with the critical point $Q_4$. We prove the following
\begin{lemma}\label{lem.Q4}
There is no orbit entering the critical point $Q_4$ and coming from the finite part of the phase space associated to the system \eqref{PSsyst}.
\end{lemma}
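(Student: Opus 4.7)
The approach is to project the system \eqref{PSsyst} onto the chart around $Q_4$ via the change of variables $x = X/Z$, $y = Y/Z$, $w = 1/Z$, together with the time rescaling $d\tau = d\eta/w$. A routine calculation, analogous to the projections performed for the previous critical points at infinity, transforms \eqref{PSsyst} into the polynomial system
$$
\begin{cases}
x' = -\sigma x w - (p-1) x y,\\[1mm]
y' = -w + x w - (N+\sigma) y w + \dfrac{p-m}{\sigma+2} x y - p y^2,\\[1mm]
w' = -(\sigma+2) w^2 - (p-m) y w,
\end{cases}
$$
with $Q_4$ corresponding to the origin $(0,0,0)$. The linearization at this origin is the zero matrix, so the standard hyperbolic analysis used for the preceding critical points fails, and one must proceed directly through the quadratic part of the vector field.

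The key step is a scaling argument on the last two equations. Along any orbit entering $Q_4$, necessarily $y\to 0$ and $w\to 0$. A case analysis of power-law ansätze $y\sim a/\tau^r$, $w\sim b/\tau^s$ with $a,b>0$ and $r,s>0$ singles out $r=1$, $s=2$ as the only self-consistent scaling: matching the leading terms in $w' = -(p-m)yw - (\sigma+2)w^2$ requires (since $r<s$, so that the $yw$ term dominates the $w^2$ term) $s+1 = r+s$, giving $r=1$, and matching the leading terms in $y' = -w - py^2 + (\text{lower order in }x)$ forces $r+1 = 2 = s$. Reading off the coefficients, the $w'$-balance yields $a = s/(p-m) = 2/(p-m)$, and then the $y'$-balance yields
$$
b = a - p a^2 = \frac{2}{p-m} - \frac{4p}{(p-m)^2} = -\frac{2(p+m)}{(p-m)^2} < 0,
$$
contradicting the positivity constraint $w = 1/Z > 0$ along the orbit. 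Hence no power-law orbit in the interior of the positive octant approaches $Q_4$.

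To rule out the possibility of more exotic (non-power-law, logarithmic or oscillatory) approaches to $Q_4$, I would complement the above by a profile-level argument: any orbit entering $Q_4$ corresponds via \eqref{PSchange} to a self-similar profile $f(\xi)$ along which $Z(\xi) = \xi^{\sigma+2} f(\xi)^{p-m}/m \to +\infty$ with $X/Z, Y/Z \to 0$. Inspection of each of the local behaviors \eqref{beh.P0}--\eqref{beh.P2} at $\xi\to 0$ and \eqref{beh.Q1}, \eqref{beh.Qg} at $\xi\to\infty$ (where in the last case $X/Z$ tends to a nonzero constant, so the orbit reaches $Q_{\gamma_0}$ rather than $Q_4$) shows that $Z$ stays bounded, so the only remaining possibility would be an interior blow-up $f\to+\infty$ at some finite $\xi^*\in(0,\infty)$. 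Substituting the natural ansatz $f(\xi)\sim C(\xi^*-\xi)^{-\gamma}$ into \eqref{SSODE}, dominant balance between $(f^m)''$ and $\xi^\sigma f^p$ forces $\gamma = 2/(p-m)$, and coefficient matching then gives $C^{p-m} = -m\gamma(m\gamma+1)(\xi^*)^{-\sigma} < 0$, again incompatible with $C>0$. The main technical obstacle lies precisely in rigorously excluding every non-standard asymptotic mode at the degenerate point $Q_4$; the combination of the scaling argument in the chart with the ODE-level obstruction to interior blow-up is what closes the gap.
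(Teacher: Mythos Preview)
Your projected system at $Q_4$ and the power-law scaling computation are correct, and you rightly identify that the linearization is entirely degenerate. The gap lies in the second half of the argument. When you write that ``inspection of each of the local behaviors \eqref{beh.P0}--\eqref{beh.P2} at $\xi\to 0$ and \eqref{beh.Q1}, \eqref{beh.Qg} at $\xi\to\infty$ shows that $Z$ stays bounded, so the only remaining possibility would be an interior blow-up'', you are implicitly assuming that every profile must exhibit one of the already classified tail behaviors as $\xi\to\infty$. But that is precisely what the lemma is part of establishing: Lemma~\ref{lem.Q4} belongs to the classification of the critical points at infinity, and if an orbit did reach $Q_4$ along $\xi\to\infty$, the corresponding profile would carry a \emph{new}, as yet unclassified, tail behavior. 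You cannot invoke the list \eqref{beh.Q1}, \eqref{beh.Qg} to exclude it, so your reduction to ``interior blow-up only'' is circular. The dominant-balance ansatz $f\sim C(\xi^*-\xi)^{-\gamma}$ then gives the right heuristic for the interior case but, being again a power-law ansatz, it does not by itself exclude non-power-law approaches there either.

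The paper avoids this circularity by going back to the ODE \eqref{SSODE} directly. It records the three limits~\eqref{limits} that characterize an orbit reaching $Q_4$ --- namely $\xi^\sigma f^{p-1}\to\infty$, $\xi^{\sigma+2}f^{p-m}\to\infty$, and $f'/(\xi^{\sigma+1}f^{p-m+1})\to 0$, valid either as $\xi\to\infty$ or as $\xi\to\xi_0\in(0,\infty)$ --- and then argues (using only \eqref{SSODE} and $m+p>2$) that the reaction term $\xi^\sigma f^p$ dominates all other terms, which forces a sign obstruction in the equation. That argument makes no assumption on the asymptotic shape of $f$, so it simultaneously covers power-law, logarithmic and oscillatory modes in both the $\xi\to\infty$ and the $\xi\to\xi_0$ regimes. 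Your scaling computation in the $Q_4$ chart is a correct and informative check of the generic mode, but to close the proof you need to replace the circular step by a direct ODE argument of this kind.
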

\begin{proof}
Since the employment of the general theory in \cite[Theorem 5(c), Section 3.10]{Pe} leads to a system whose linearization has all the eigenvalues equal to zero, it is rather difficult to analyze the critical point $Q_4$ in this way. We instead prove this lemma by passing to profiles and working on the equation \eqref{SSODE}. Indeed, assuming for contradiction that there is an orbit entering $Q_4$ and coming from the region $\{X\in(0,\infty), Z\in(0,\infty)\}$ of the phase space, on such a trajectory we would have
$$
\lim\limits_{\eta\to\infty}Z(\eta)=\infty, \qquad \lim\limits_{\eta\to\infty}\frac{Z(\eta)}{X(\eta)}=\lim\limits_{\eta\to\infty}\frac{Z(\eta)}{Y(\eta)}=\infty,
$$
which, translated in terms of profiles, means
\begin{equation}\label{limits}
\xi^{\sigma}f(\xi)^{p-1}\to\infty, \qquad \xi^{\sigma+2}f(\xi)^{p-m}\to\infty,  \qquad \frac{f'(\xi)}{\xi^{\sigma+1}f(\xi)^{p-m+1}}\to0,
\end{equation}
either when $\xi\to\infty$ or when $\xi\to\xi_0\in(0,\infty)$. Since we are in a range where $m+p>2$ is granted, the proof is completely identical to the analysis done in great detail when $m<1$ in \cite[Case 2 and Case 3, Appendix]{IMS23b} and based on the idea that $\xi^{\sigma}f^{p-1}(\xi)$ dominates over the other terms in \eqref{SSODE}, since a closer inspection of the estimates therein shows that the restriction $m<1$ does not play any role.
\end{proof}

\section{Some preparatory lemmas on the global analysis}\label{sec.prep}

In this preparatory section, we establish first the global behavior of some specific orbits completely included in the invariant planes $\{X=0\}$, $\{Z=0\}$ in the system \eqref{PSsyst}, respectively $\{x=0\}$ in the system \eqref{PSinf1}, which will be seen as limits of the manifolds we have to study in the next sections in order to prove our main theorems. We begin with the plane $\{X=0\}$, in which the system \eqref{PSsyst} reduces to the following one:
\begin{equation}\label{PSsystX0}
\left\{\begin{array}{ll}\dot{Y}=-(N-2)Y-Z-mY^2, \\ \dot{Z}=Z(\sigma+2+(p-m)Y).\end{array}\right.
\end{equation}
\begin{lemma}\label{lem.X0}
In the dynamical system \eqref{PSsystX0}, the critical point $P_0=(0,0)$ is a saddle point in dimension $N\geq3$, a saddle-node in dimension $N=2$ and an unstable node in dimension $N=1$. The orbit going out of it on its unstable manifold (or the orbit going out of it tangent to the eigenvector $E=(-1,1+\sigma)$ in dimension $N=1$) connects to the stable node $Q_3$ at infinity.
\end{lemma}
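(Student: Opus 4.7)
The plan is to classify $P_0$ by linearization, select the unique orbit tangent to its unstable direction, trap it in the forward-invariant quadrant $\{Y<0,\ Z>0\}$ of the 2D phase plane, rule out convergence to the other finite equilibria $P_1$ and (when present) $P_2$, and finally identify its $\omega$-limit at infinity with $Q_3$ via Poincar\'e hypersphere coordinates.

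The Jacobian of \eqref{PSsystX0} at $P_0$ is upper triangular with diagonal entries $-(N-2)$ and $\sigma+2$, yielding a saddle for $N\geq 3$, a saddle-node for $N=2$ (the reduced flow on the center manifold tangent to the $Y$-axis is $\dot Y\sim -mY^2$, matching the saddle-node normal form), and an unstable node for $N=1$. The eigenvector associated with $\lambda_2=\sigma+2$ is $E=(-1,N+\sigma)$, which specializes to $(-1,1+\sigma)$ for $N=1$; in each case this selects a unique orbit emerging from $P_0$ tangent to $E$ and entering $\{Y<0,\ Z>0\}$. This quadrant is forward invariant because $\{Z=0\}$ is invariant and $\dot Y=-Z<0$ on $\{Y=0,\ Z>0\}$. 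Comparing the orbit's initial slope $-(N+\sigma)$ with the slope $-(N-2)$ of the $Y$-nullcline parabola $Z=-Y(N-2+mY)$ at $P_0$ shows the orbit leaves $P_0$ strictly above this parabola; since the parabola lies in $\{Z\leq 0\}$ whenever $Y\leq -\max\{(N-2)/m,0\}$, the orbit remains permanently above it, forcing $\dot Y<0$ and making $Y$ strictly decreasing.

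If $Y$ converged to a finite $Y_\infty<0$, passing to the limit in $\dot Y$ would force $Z_\infty=-Y_\infty(N-2+mY_\infty)$, which is non-negative only at $Y_\infty\in\{0,-(N-2)/m\}$; return to $P_0$ is excluded by unstable-manifold membership, leaving convergence to $P_1$ as the \textbf{main obstacle}. For $p\in(m,p_c(\sigma)]$ this is immediate: the 2D Jacobian at $P_1$ restricted to $\{X=0\}$ has eigenvalues $N-2>0$ and $(N-2)(p_c(\sigma)-p)/m\geq 0$, making $P_1$ an unstable node in the restriction, consistent with Lemma \ref{lem.P1}. For $p\in(p_c(\sigma),p_s(\sigma))$ and $N\geq 3$, $P_1$ becomes a 2D saddle with a one-dimensional stable manifold tangent at $P_1$ to $(1,N-2-\lambda_-)$, where $\lambda_-=(N-2)(p_c(\sigma)-p)/m<0$; I plan to rule out coincidence with the $P_0$-unstable orbit by a transversality argument on the line $\{Y=-(\sigma+2)/(p-m)\}$ where $\dot Z=0$, comparing the $Z$-coordinates at which each invariant curve crosses this transversal and exhibiting a strict inequality between them. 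Accumulation at $P_2$ (when $p>p_c(\sigma)$) is excluded simultaneously because the 2D Jacobian at $P_2$ has trace $(N-2)(p_s(\sigma)-p)/(p-m)>0$ and determinant $(p-m)Z(P_2)>0$, so $P_2$ is an unstable focus or node in the plane $\{X=0\}$.

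Hence $Y\to-\infty$, and the dominant term $-mY^2$ in $\dot Y$ drives $Y$ to $-\infty$ in finite $\eta$-time. Integrating $\dot Z/Z=\sigma+2+(p-m)Y\to-\infty$ along the orbit then yields $Z\to 0$; combined with $|Y|\to\infty$, this gives $Z/|Y|\to 0$, so after normalizing by $\sqrt{1+X^2+Y^2+Z^2}$ on the Poincar\'e hypersphere we obtain $\overline Y\to -1$ and $\overline X,\overline Z,W\to 0$, identifying the $\omega$-limit with $Q_3=(0,-1,0,0)$, the stable node analyzed in Lemma \ref{lem.Q23}.
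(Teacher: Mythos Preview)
Your argument works for $N\in\{1,2\}$ and for $N\geq 3$ with $p\in(m,p_c(\sigma)]$, but it has a genuine gap when $N\geq 3$ and $p\in(p_c(\sigma),p_s(\sigma))$. The problem is your choice of barrier: the $Y$-nullcline $Z=-Y(N-2+mY)$ is \emph{not} one-sided. Writing $\Phi=Z+Y(N-2+mY)$ so that $\dot Y=-\Phi$, one has $\dot\Phi=\dot Z-(N-2+2mY)\Phi$, hence at a contact point $\Phi=0$ we get $\dot\Phi=\dot Z=Z(\sigma+2+(p-m)Y)$. Thus a trajectory can cross the nullcline from above to below whenever $Y<-(\sigma+2)/(p-m)$, and for $p>p_c(\sigma)$ the interval $\bigl(-\tfrac{N-2}{m},-\tfrac{\sigma+2}{p-m}\bigr)$ is nonempty with the nullcline lying in $\{Z>0\}$ there. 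Your sentence ``since the parabola lies in $\{Z\leq 0\}$ whenever $Y\leq -\max\{(N-2)/m,0\}$, the orbit remains permanently above it'' is a non sequitur --- it says nothing about the strip $-(N-2)/m<Y<0$. (The side remark that the limiting nullcline value ``is non-negative only at $Y_\infty\in\{0,-(N-2)/m\}$'' is also false: it is non-negative on the whole interval.) Once $\dot Y<0$ is no longer guaranteed, your monotonicity-based endgame collapses, and indeed you leave the exclusion of $P_1$ in this range as an unfinished ``plan''.

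The paper avoids this by replacing the nullcline with the \emph{scaled} parabola
\[
Z=-\frac{N+\sigma}{N-2}\,Y(mY+N-2)
\]
(curve \eqref{iso1}), which also joins $P_0$ to $P_1$ but across which the flow has the definite sign $\frac{(N+\sigma)(p_s(\sigma)-p)}{N-2}Y^2(mY+N-2)>0$ for \emph{every} $p<p_s(\sigma)$ on $-(N-2)/m<Y<0$. The region $\mathcal D$ above \eqref{iso1} is therefore positively invariant throughout the subcritical range; a second-order expansion \eqref{interm13} of the unstable manifold places it in $\mathcal D$, and there one checks directly $\dot Y<\frac{\sigma+2}{N-2}Y(mY+N-2)<0$. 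Since $P_2$ lies strictly below \eqref{iso1} and, at $P_1$, the stable eigendirection has slope $(N-2)(m+p-p_c(\sigma))/m<N+\sigma$ (the slope of \eqref{iso1} at $P_1$) whenever $p<p_s(\sigma)$, convergence to either finite saddle is excluded automatically --- no separate transversality argument is needed. Your identification of $Q_3$ once $Y\to-\infty$ is fine.
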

\begin{proof}
We give here the skeleton of the proof, skipping a few technical details that are given in \cite[Section 5]{IMS23b}. The stability type of $P_0$ is just a consequence of Lemmas \ref{lem.P0}, \ref{lem.N2} and \ref{lem.N1}. Let us fix now $N\geq3$. Notice first that the unstable manifold of $P_0$ is tangent to the eigenvector $E=(-1,N+\sigma)$, hence it goes out into the negative half-plane $\{Y<0\}$ and remains forever there, since the flow of the system \eqref{PSsystX0} across the axis $\{Y=0\}$ points towards the negative half-plane. Considering the following curve which connects $P_0$ to $P_1$ and its normal direction
\begin{equation}\label{iso1}
Z=-\frac{N+\sigma}{N-2}(mY+N-2)Y, \qquad \overline{n}=\left(\frac{N+\sigma}{N-2}(2mY+N-2),1\right),
\end{equation}
we observe that the flow of the system \eqref{PSsystX0} across it is given by the sign of the expression
$$
H(Y)=\frac{(N+\sigma)(p_s(\sigma)-p)}{N-2}Y^2(mY+N-2)>0,
$$
in the range where $mY+N-2>0$, that is, the $Y$-interval between $P_1$ and $P_0$. Thus, the region
$$
\mathcal{D}:=\left\{Y<0, Z>0, Z>-\frac{N+\sigma}{N-2}Y(mY+N-2)\right\}
$$
is positively invariant, that is, an orbit entering it, will lie forever inside it. One can readily notice that, for any point in $\mathcal{D}$ with $-(N-2)/m<Y<0$, we have
\begin{equation*}
\begin{split}
\dot{Y}&=-mY^2-(N-2)Y-Z<-Y(mY+N-2)+\frac{N+\sigma}{N-2}Y(mY+N-2)\\
&=\frac{\sigma+2}{N-2}Y(mY-N+2)<0,
\end{split}
\end{equation*}
while $\dot{Y}<0$ is obvious for $Y\leq-(N-2)/m$. Hence, $Y(\eta)$ is a decreasing function on any trajectory lying (forever) inside the region $\mathcal{D}$. Since a calculation of the second order of the unstable manifold of $P_0$ gives
\begin{equation}\label{interm13}
Z=-(N+\sigma)Y-\frac{(N+\sigma)p}{N+2\sigma+2}Y^2+o(Y^2),
\end{equation}
in a neighborhood of $(Y,Z)=(0,0)$ (see \cite[Section 5]{IMS23b} for the details of this calculation), we infer that the unstable manifold of $P_0$ goes into the region $\mathcal{D}$ exactly when $1<p<p_s(\sigma)$, since \eqref{interm13} coincides with \eqref{iso1} for $p=p_s(\sigma)$ and the right hand side of \eqref{interm13} decreases with respect to $p$. Thus, the unique orbit on the unstable manifold of $P_0$ remains forever inside the region $\mathcal{D}$ and consequently, $Y(\eta)$ is decreasing with $\eta$ along it. Since $Z(\eta)$ has only one change of monotonicity at $Y=-(\sigma+2)/(p-m)$, we conclude that both $Y(\eta)$ and $Z(\eta)$ are monotone at least as $\eta\to\infty$ (oscillations are not possible). The Poincar\'e-Bendixon theory \cite[Section 3.7]{Pe} and this monotonicity give that the orbit converges to a critical point as $\eta\to\infty$, and it is easy to check that this critical point can only be $Q_3$. We refer the reader to the proof of \cite[Lemma 5.2]{IMS23b} for full details. In dimension $N\in\{1,2\}$, the proof is practically immediate by simply taking $\mathcal{D}=\{Y<0\}$ and noting that $\dot{Y}(\eta)<0$ while $Y(\eta)<0$, hence $Y$ is a decreasing function of $\eta\in\real$ on the whole trajectory.
\end{proof}
The previous lemma will be very important in the sequel, and, in fact, it is this result (and its consequences for the shooting method from a large shooting parameter, as we shall see in the forthcoming sections) that was missing in \cite[Section IV.1.4, pp. 195-196]{S4} for $\sigma=0$ and in dimensions $N\geq2$ for the proof of the multiplicity, since \cite[Lemma 4, p. 193, Section IV.1.4]{S4} is only established in dimension $N=1$.

\medskip

\noindent \textbf{Remark.} The curve \eqref{iso1} gives rise to an explicit family of stationary solutions for $p=p_s(\sigma)$, given by
\begin{equation}\label{sol.sobolev}
U_C(x)=\left[\frac{(N-2)(N+\sigma)C}{(|x|^{\sigma+2}+C)^2}\right]^{(N-2)/2m(\sigma+2)},
\end{equation}
see \cite[Section 9]{IMS23b} for its detailed deduction.

\medskip

We turn now our attention to the invariant plane $\{Z=0\}$, where the system \eqref{PSsyst} becomes
\begin{equation}\label{PSsystZ0}
\left\{\begin{array}{ll}\dot{X}=X(2-(m-1)Y), \\ \dot{Y}=X-(N-2)Y-mY^2+\frac{p-m}{\sigma+2}XY.\end{array}\right.
\end{equation}
Moreover, the same invariant plane can be regarded in variables $(x,y)$ as the plane $\{z=0\}$ in the system \eqref{PSinf1}, that is
\begin{equation}\label{PSsystz00}
\left\{\begin{array}{ll}\dot{x}=x[(m-1)y-2x],\\
\dot{y}=-y^2+\frac{p-m}{\sigma+2}y+x-Nxy,\end{array}\right.
\end{equation}
We have
\begin{lemma}\label{lem.Z0}
The critical point $P_0=(0,0)$ in the system \eqref{PSsystZ0} is a saddle point in dimension $N\geq3$, a saddle-node in dimension $N=2$ and an unstable node in dimension $N=1$. The unique orbit contained in its unstable manifold for $N\geq2$ (or the unique orbit tangent to the eigenvector $E_1=(1,1)$ in dimension $N=1$) connects to the critical point $P_3$, which is a stable point for the system \eqref{PSsystZ0}. The critical point $Q_5=(0,(p-m)/(\sigma+2))$ is a saddle in the system \eqref{PSsystz00}. The unique orbit contained in its unstable manifold connects to the critical point $P_3$.
\end{lemma}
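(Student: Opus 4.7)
The plan is to combine local linearizations with a planar Poincar\'e--Bendixson argument, following the template of Lemma \ref{lem.X0}. Linearizing \eqref{PSsystZ0} at $P_0$ gives the matrix $\bigl(\begin{smallmatrix}2 & 0\\ 1 & -(N-2)\end{smallmatrix}\bigr)$, with eigenvalues $2$ and $-(N-2)$ and unstable eigenvector $E_1=(N,1)$ (which reduces to $(1,1)$ in $N=1$). This yields the saddle/saddle-node/unstable-node trichotomy for $P_0$ in the three cases $N\geq3$, $N=2$ and $N=1$. For the nature of $Q_5$ in \eqref{PSsystz00}, I would extract from the $3\times3$ matrix $M(Q_5)$ computed in Lemma \ref{lem.Q1Q5} its upper-left $2\times2$ block, obtaining eigenvalues $(m-1)(p-m)/(\sigma+2)>0$ and $-(p-m)/(\sigma+2)<0$; hence $Q_5$ is a saddle whose one-dimensional unstable manifold is tangent to a vector with positive $x$-component, so it enters $\{x>0\}$. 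Finally, restricting $M(P_3)$ from Lemma \ref{lem.P3} to the invariant plane $\{Z=0\}$ gives a $2\times2$ block of determinant $2(mN-N+2)/(m-1)>0$ and trace $A(m,N,p,\sigma)<0$; hence $P_3$ is a stable node or focus in \eqref{PSsystZ0}.

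For the connection from $P_0$ to $P_3$, I would track the orbit $\gamma_0$ leaving $P_0$ along the unstable direction, which enters $\{X>0,\,Y>0\}$. Inside the horizontal strip $\mathcal{S}=\{0<Y<2/(m-1)\}$ we have $\dot X>0$, so $X$ is strictly increasing along $\gamma_0\cap\mathcal{S}$. A direct calculation on the upper boundary $\{Y=2/(m-1)\}$ yields
\begin{equation*}
\dot Y\big|_{Y=2/(m-1)}=\frac{L}{(m-1)(\sigma+2)}\,X-\frac{2(mN-N+2)}{(m-1)^2},
\end{equation*}
which vanishes exactly at $X=X(P_3)$ and is negative for $0<X<X(P_3)$; on this sub-segment, the flow crosses downward into $\mathcal{S}$. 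Together with the invariance of $\{X=0\}$ and the upward crossing of $\{Y=0\}$ for $X>0$, this allows one to construct a bounded, positively invariant region $\mathcal{T}\subset\{X>0,\,Y>0\}$ containing $P_3$ in its interior and trapping $\gamma_0$, with the upper-right boundary of $\mathcal{T}$ formed by a suitable arc of the $\dot Y=0$ nullcline. Periodic orbits inside $\mathcal{T}$ can be excluded by a Bendixson--Dulac argument with a weight of the form $B(X,Y)=X^{-a}Y^{-b}$ for suitable exponents, so Poincar\'e--Bendixson forces $\omega(\gamma_0)=\{P_3\}$. In dimension $N=1$, where $P_0$ is an unstable node, the same argument applies verbatim to the unique orbit tangent to $E_1=(1,1)$ (the leading direction), which is geometrically isolated.

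The connection from $Q_5$ is handled by a parallel analysis in the $(x,y)$-coordinates of \eqref{PSinf1} restricted to $\{z=0\}$, where $Q_5$ is a finite saddle and $P_3$ maps via \eqref{change2} to the finite stable point $(1/X(P_3),\,2/((m-1)X(P_3)))$. One builds a corresponding trapping region bordered by arcs of the pushforwards of the $\dot X=0$ and $\dot Y=0$ nullclines and concludes by the same Poincar\'e--Bendixson argument. The main obstacle in both connections is the exclusion of periodic orbits inside the trapping regions, particularly delicate when $P_3$ is a stable focus so that $\gamma_0$ may spiral and cross $\{Y=2/(m-1)\}$ repeatedly; there, the Bendixson--Dulac criterion (or an equivalent monotonicity argument for a suitable scalar quantity along the flow) provides the needed control, exactly as in the companion analysis carried out in \cite[Section 5]{IMS23b}.
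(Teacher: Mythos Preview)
Your proposal is correct and follows essentially the same strategy as the paper: local linearization for the stability types, then Dulac's criterion with a weight $X^{a}Y^{b}$ to rule out periodic orbits in $\{X>0,\,Y>0\}$, then Poincar\'e--Bendixson to force the $\omega$-limit to be $P_3$. The paper is somewhat more economical in two respects. First, it does not build an explicit bounded trapping region $\mathcal{T}$; it simply observes that the orbit stays in the open first quadrant (since $\{X=0\}$ is invariant and the flow across $\{Y=0\}$ points upward), applies Dulac with the specific exponents $a=-(2mN-N-2m+2)/(mN-N+2)$ and $b=-(mN-N+2m+2)/(mN-N+2)$, and then invokes the full classification of critical points from Sections~\ref{sec.local} and~\ref{sec.infty} to conclude that $P_3$ is the only admissible endpoint. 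Your trapping-region sketch is a reasonable alternative, but note that you leave the right-hand boundary of $\mathcal{T}$ vague (the segment $\{Y=2/(m-1),\,X>X(P_3)\}$ has the flow pointing outward), so you would still need the Dulac argument anyway; the paper's route bypasses this construction entirely. Second, the paper handles the $Q_5$ connection in a single line: once Dulac excludes closed orbits in the whole first quadrant, \emph{every} orbit living there---in particular the unstable manifold of $Q_5$---must converge to $P_3$, with no need to pass to the $(x,y)$ chart or to build a separate trapping region.
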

\begin{proof}
The fact that $P_0$ is a saddle in dimension $N\geq3$, a saddle-node in dimension $N=2$ and an unstable node in dimension $N=1$, and that $P_3$ is a stable point for \eqref{PSsystZ0} follow readily from Lemmas \ref{lem.P0}, \ref{lem.N2}, \ref{lem.N1}, respectively \ref{lem.P3}. Similarly, the fact that $Q_5$ is a saddle point for the system \eqref{PSsystz00} follows from Lemma \ref{lem.Q1Q5}. Moreover, the unique orbit contained in the unstable manifold of $P_0$ goes out tangent to the eigenvector $E_1=(N,1)$, thus into the positive half-plane $\{Y>0\}$. Since the flow of the system \eqref{PSsystZ0} across the axis $\{Y=0\}$ points into the positive half-plane, we conclude that this orbit stays forever in the region $\{Y>0\}$. We show first that the system \eqref{PSsystZ0} does not have any limit cycle in the region $\{Y>0\}$. To this end, we use Dulac's Criterion by multiplying the two components of the vector field of the system by the factor $X^{a}Y^{b}$, with
$$
a=-\frac{2mN-N-2m+2}{mN-N+2}, \qquad b=-\frac{mN-N+2m+2}{mN-N+2}.
$$
We obtain that the divergence of the new vector field obtained by this multiplication is given by
$$
\frac{\partial(X^aY^b\dot{X})}{\partial X}+\frac{\partial(X^aY^b\dot{Y})}{\partial Y}=bX^{a+1}Y^{b-1}+\frac{(b+1)(\sigma+2)}{p-m}X^{a+1}Y^b<0,
$$
since obviously $b<0$ and
$$
b+1=-\frac{2m}{mN-N+2}<0.
$$
Dulac's Criterion (see for example \cite[Theorem 2, Section 3.9]{Pe}) then ensures that there is no closed orbit contained entirely in the region $\{Y>0\}$. Since the axis $\{X=0\}$ is invariant and the axis $\{Y=0\}$ has a definite direction of the flow, there is no closed orbit contained in the first quadrant, including the axis. This, together with the Poincar\'e-Bendixon's theory, implies that the unique orbit contained in the unstable manifold of $P_0$ has to end up by entering a critical point lying on the closure of the first quadrant. The classification we performed in Sections \ref{sec.local} and \ref{sec.infty} shows that the only possible critical point is $P_3$, as claimed. In fact, this shows that whatever orbit existing inside the first quadrant $\{X>0,Y>0\}$ of the system \eqref{PSsystZ0} (with any possible origin) is attracted by $P_3$, which also applies for the critical point $Q_5$, completing the proof.
\end{proof}
The next preparatory result is related to a plane that can be formally seen as $\{X=\infty\}$. More precisely, we go to the system \eqref{PSinf1} and we would like to set there $x=0$. Since this system decouples, let us first replace the variable $z$ in \eqref{PSinf1} by $w=xz\geq0$, a change of variable that has been seen to be very useful in, for example, \cite[Section 4]{IS21}. Performing first this change, and letting then $x=0$ in the resulting system, we are left with the system
\begin{equation}\label{PSsystw0}
\left\{\begin{array}{ll}\dot{y}=-y^2+\frac{p-m}{\sigma+2}y-w,\\ \dot{w}=(m+p-2)yw,\end{array}\right.
\end{equation}
where derivatives are taken with respect to the independent variable $\eta_1$ given in \eqref{indep2}, with finite critical points $Q_1'=(0,0)$ and $Q_5'=((p-m)/(\sigma+2),0)$, which can be seen as the ``restrictions'' of the true critical points $Q_1$ and $Q_5$ to the system \eqref{PSsystw0}. We have the following
\begin{lemma}\label{lem.w0}
The critical point $Q_5'$ is a saddle point in the system \eqref{PSsystw0} and the critical point $Q_1'$ is a saddle-node in the system \eqref{PSsystw0}. The unique orbit contained in the unstable manifold of $Q_5'$ connects to the critical point $Q_3$, with $y(\eta_1)$ decreasing for $\eta_1\in\real$. The orbits going out of $Q_1'$ connect all of them to the critical point $Q_3$ as well, with the function $y(\eta_1)$ having at most one maximum point along these orbits.
\end{lemma}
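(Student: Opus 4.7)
The plan is standard phase-plane analysis for the 2D autonomous system \eqref{PSsystw0}. I would first linearize at both critical points. At $Q_5'$ the Jacobian is triangular with eigenvalues $-(p-m)/(\sigma+2)$ and $(m+p-2)(p-m)/(\sigma+2)$ of opposite sign (recalling $p>m$ and $m+p>2$), so $Q_5'$ is a saddle, and the unstable eigenvector points into the region $\{0<y<(p-m)/(\sigma+2),\,w>0\}$. At $Q_1'$ the Jacobian has eigenvalues $0$ and $(p-m)/(\sigma+2)$, confirming a saddle-node whose strong unstable direction coincides with the invariant $y$-axis (hence the true unstable manifold lies on $\{w=0\}$ and goes to $Q_5'$) and whose central eigenvector is $(1,(p-m)/(\sigma+2))$. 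A routine second-order center-manifold reduction (as in the proof of Lemma \ref{lem.Q1Q5}) gives
\[
w_{c}(y)=\frac{p-m}{\sigma+2}\,y-(m+p-1)y^{2}+O(y^{3}),
\]
with reduced flow $\dot{y}=(m+p-2)y^{2}+O(y^{3})$, so the center direction is repelling on the $y>0$ side; consequently the orbits emanating from $Q_1'$ into $\{w>0\}$ form a one-parameter parabolic sector tangent at the origin to the center eigenvector.

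The central geometric tool is the $\dot{y}$-nullcline $\Pi:=\{w_{\Pi}(y)=y((p-m)/(\sigma+2)-y)\}$, a parabola joining $Q_1'$ to $Q_5'$. Differentiating the first equation of \eqref{PSsystw0} along the flow, on $\Pi$ one finds $\ddot{y}=-(m+p-2)yw$, which is strictly negative for $0<y<(p-m)/(\sigma+2)$; therefore $\dot{y}$ can only change sign from positive to negative across $\Pi$, and the region above $\Pi$ inside the strip $\{0<y<(p-m)/(\sigma+2),\,w>0\}$ is positively invariant. The unstable eigenvector of $Q_5'$ places the corresponding orbit in this invariant region, whence $\dot{y}<0$ along it for every $\eta_{1}\in\real$; the orbit then crosses the $w$-axis (since $\dot{y}=-w<0$ on $\{y=0\}$) and enters $\{y<0,\,w>0\}$, where $\dot{y}<0$ (every term of $\dot y$ is non-positive) and $\dot{w}=(m+p-2)yw<0$ hold simultaneously. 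For the orbits out of $Q_1'$, the expansion $w_{c}(y)-w_{\Pi}(y)=-(m+p-2)y^{2}+O(y^{3})$ shows that the center manifold lies strictly below $\Pi$ near the origin; combined with the fact that all orbits in the parabolic sector are $C^{1}$-close to the center manifold near $Q_1'$ (cf.\ \cite[Theorem 3.2']{Sij}), every such orbit starts below $\Pi$ with $\dot{y}>0$ and $\dot{w}>0$. Since $w$ is bounded on the region $\{0<w<w_{\Pi}\}$ while $\dot{w}/w=(m+p-2)y$ stays bounded away from zero once $y$ leaves a neighborhood of the origin, the orbit must cross $\Pi$ in finite $\eta_{1}$, and the strict negativity of $\ddot{y}$ at the crossing forces a strict local maximum of $y$; no further crossing is possible in $\{y>0\}$ (same sign argument), nor in $\{y<0\}$ (where $\dot{y}<0$ identically), so there is at most one maximum. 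Thereafter the orbit lies in the invariant region above $\Pi$ and reproduces the behaviour of the $Q_5'$ unstable orbit.

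To identify the $\omega$-limit, the strict monotonicity of $y$ in the two invariant regions precludes closed orbits, so the Poincar\'e-Bendixson theorem applied to the Poincar\'e compactification of \eqref{PSsystw0} forces convergence to a critical point at infinity; moreover, from $\dot{w}/w=(m+p-2)y$ with $y\to-\infty$ one deduces that $w\to 0^{+}$ along the orbit. The local analysis of Lemma \ref{lem.Q23} then shows that the only critical point of the original system compatible with the limits $y\to-\infty$, $w\to 0^{+}$ is the stable node $Q_{3}$, concluding the argument. The main technical obstacle I anticipate is the saddle-node analysis at $Q_1'$: one has to verify that the ``at most one maximum" property holds uniformly on the entire parabolic sector, not merely on the distinguished center-manifold trajectory, so that the $C^{1}$-closeness to $w_{c}$ can be leveraged to propagate the sign $w<w_{\Pi}$ to every orbit emanating from $Q_1'$.
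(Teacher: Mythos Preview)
Your proposal is correct and follows essentially the same route as the paper: linearize at $Q_5'$ and $Q_1'$, use the $\dot y$-nullcline $\Pi$ (the paper's isocline \eqref{iso2}) as the key barrier, compare the unstable eigenvector and the center-manifold expansion against $\Pi$ to place the outgoing orbits on the correct side, and finish via monotonicity once $y<0$. The one substantive step you assert rather than prove---that the $Q_5'$ unstable orbit actually reaches $\{y=0\}$ instead of having $w\to\infty$ along a vertical asymptote at some $y_0>0$---is dispatched in the paper by noting that $dw/dy\to -(m+p-2)y_0$ stays bounded, which is exactly the missing line in your ``the orbit then crosses the $w$-axis''.
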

\begin{proof}
The matrix of the linearization of the system \eqref{PSsystw0} in a neighborhood of $Q_5'$ is
$$
M(Q_5')=\left(
          \begin{array}{cc}
            -\frac{p-m}{\sigma+2} & -1 \\[1mm]
            0 & \frac{(m+p-2)(p-m)}{\sigma+2} \\
          \end{array}
        \right),
$$
thus $Q_5'$ is a saddle point. Consider now the isocline where $\dot{y}=0$, that is,
\begin{equation}\label{iso2}
-y^2+\frac{p-m}{\sigma+2}y-w=0, \qquad {\rm with \ normal} \ \overline{n}=\left(-2y+\frac{p-m}{\sigma+2},-1\right)
\end{equation}
which lies in the region $w\geq0$ if and only if $0\leq y\leq (p-m)/(\sigma+2)$. The flow of the system \eqref{PSsystw0} across the isocline is given by the sign of the expression $-(m+p-2)yw<0$, hence it points out into the increasing direction of $w$. This means that an orbit entering the region $\dot{y}<0$ remains there forever. Observing that, in a neighborhood of $Q_5'$, the isocline has the slope
\begin{equation}\label{interm15}
w\sim-\frac{p-m}{\sigma+2}\overline{y}, \qquad \overline{y}=y-\frac{p-m}{\sigma+2}
\end{equation}
while the slope of the actual orbit contained in the unstable manifold of the point $Q_5'$ is given by the eigenvector corresponding to the second eigenvalue of $M(Q_5')$, that is,
\begin{equation}\label{interm16}
w\sim-\frac{(m+p-1)(p-m)}{\sigma+2}\overline{y},
\end{equation}
we deduce from \eqref{interm15}, \eqref{interm16} and the fact that $m+p-1>1$ that the orbit stemming from $Q_5'$ enters the region $\dot{y}<0$ (superior to the isocline \eqref{iso2}) and stays forever inside it. This proves that $y(\eta_1)$ is decreasing for $\eta_1\in\real$ along this orbit. It remains to show that this orbit crosses the axis $\{y=0\}$ at a finite point, that is, there is $\eta_1\in\real$ such that $y(\eta_1)=0$. Assume for contradiction that this is not the case, hence $y(\eta_1)>0$ for any $\eta_1\in\real$. We notice from the system \eqref{PSsystw0} that $\dot{y}(\eta_1)<0$ and $\dot{w}(\eta_1)>0$ for any $\eta_1\in\real$, so that, there exist
$$
y_0:=\lim\limits_{\eta_1\to\infty}y(\eta_1)\in\left[0,\frac{p-m}{\sigma+2}\right], \qquad w_0:=\lim\limits_{\eta_1\to\infty}w(\eta_1)\in(0,\infty].
$$
If $w_0$ is finite, then $(y_0,w_0)$ would be a finite critical point of the system \eqref{PSsystw0} with $y\geq0$, $w>0$ and there is no such point. Thus, $w_0=\infty$ and thus the orbit will form a vertical asymptote from the right as $y\to y_0$. Moreover, since $y(\eta_1)$ is monotone (hence one-to-one) along the orbit stemming from $Q_5'$, for $\eta_1\in\real$, we can apply the inverse function theorem and express this trajectory as a function $w(y)$, with derivative
$$
\frac{dw}{dy}=\frac{(m+p-2)yw(y)}{-y^2+(p-m)y/(\sigma+2)-w(y)}.
$$
Passing to the limit as $y\to y_0$ and taking into account that $w(y)\to+\infty$, we find that
$$
\lim\limits_{y\to y_0, y>y_0}w'(y)=-(m+p-2)y_0
$$
which is finite, contradicting the vertical asymptote from the right of the function $w(y)$ at $y=y_0$. We thus infer that there exists $\overline{\eta}_1\in\real$ such that $y(\overline{\eta}_1)=0$ on the orbit contained in the unstable manifold of $Q_5'$, and then $y(\eta_1)<0$ for any $\eta_1>\overline{\eta}_1$. Furthermore, the latter, together with the second equation of the system \eqref{PSsystw0}, implies that $\dot{w}(\eta_1)<0$ for any $\eta_1>\overline{\eta}_1$. This shows that the orbit contained in the unstable manifold of $Q_5'$ has a critical point as limit as $\eta_1\to\infty$, lying in the half-plane $\{y<0\}$ and with finite $w$-coordinate. There are no such finite critical points, and it is then easy to derive that
$$
\lim\limits_{\eta_1\to\infty}y(\eta_1)=-\infty, \qquad \lim\limits_{\eta_1\to\infty}\frac{w(\eta_1)}{y(\eta_1)}=0,
$$
thus the orbit will reach the critical point $Q_3$.

\medskip

We are left with the critical point $Q_1'$. The matrix of the linearization of the system \eqref{PSsystw0} in a neighborhood of $Q_1'$ is given by
$$
M(Q_1')=\left(
          \begin{array}{cc}
            \frac{p-m}{\sigma+2} & -1 \\[1mm]
            0 & 0 \\
          \end{array}
        \right),
$$
thus we have a one-dimensional unstable manifold (contained in the invariant $y$-axis) and one-dimensional center manifolds. Applying the theory in \cite[Sections 2.4 and 2.5]{Carr} in order to analyze these center manifolds, we find that the Taylor expansion of them up to the second order is given by
$$
w=\frac{p-m}{\sigma+2}y-(m+p-1)y^2+o(y^2),
$$
while the flow on these center manifolds in a neighborhood of the origin is given by
$$
\dot{w}=\frac{(m+p-2)(p-m)}{\sigma+2}y^2>0.
$$
Thus, all the center manifolds have an unstable flow, all these trajectories going out into the positive half-plane $\{y>0\}$ tangent to the line
$$
w=\frac{p-m}{\sigma+2}y.
$$
Thus, they have to enter the closed region limited by the axis $\{w=0\}$ (which is invariant) and the isocline \eqref{iso2}, that is, the region where $\dot{y}>0$. Since the flow on the isocline \eqref{iso2} points into an unique direction (towards the open, exterior region where $\dot{y}<0$), it follows that all the orbits contained in center manifolds going out of $Q_1'$ will cross the isocline \eqref{iso2} at some point (and they can do this only once, due to the direction of the flow across \eqref{iso2}) and then enter and stay forever in the region where $\dot{y}<0$. Since they are bounded from above by the unique orbit contained in the unstable manifold of $Q_5'$, they have then to cross the axis $\{y=0\}$ at some finite point and enter the half-plane $\{y<0\}$. The same considerations as in the final part of the proof related to the orbit contained in the unstable manifold of $Q_5'$ show that all these orbits will end up connecting to $Q_3$, and $y(\eta_1)$ has a unique maximum point, which is exactly at the value of $\eta_1$ for which the orbit intersects the isocline \eqref{iso2}.
\end{proof}

In order to ease the reading of the previous proof, we plot in Figure \ref{fig4} the unique trajectory of the system \eqref{PSsystw0} stemming from the critical point $Q_5'$, which is decreasing with respect to $y$, and a bunch of trajectories of the same system \eqref{PSsystw0} contained in (different) center manifolds going out of the critical point $Q_1'$ and having exactly one maximum in the $y$-coordinate.

\begin{figure}[ht!]
  % Requires \usepackage{graphicx}
  \begin{center}
  \includegraphics[width=11cm,height=7.5cm]{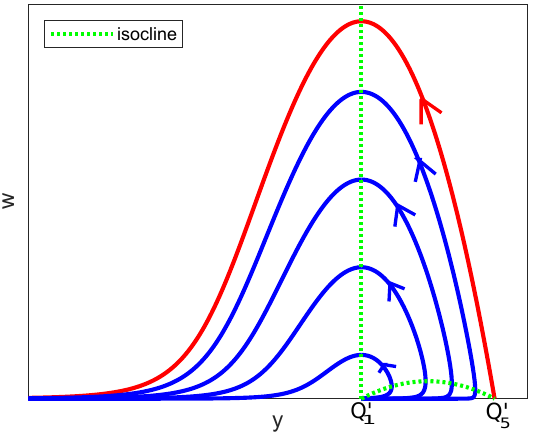}
  \end{center}
  \caption{Trajectories of the system \eqref{PSsystw0} going out of $Q_5'$ and $Q_1'$.}\label{fig4}
\end{figure}

We end this section with two more preparatory results, which are not related to any invariant plane, but to a "plane of no return". More precisely, on the one hand, we have
\begin{lemma}\label{lem.noret}
Consider the plane $\{Y=-(\sigma+2)/(p-m)\}$ in the phase space associated to the system \eqref{PSsyst}. If $m<p<p_s(\sigma)$, on any orbit contained in the unstable manifold of the critical point $P_0$, if there is $\eta_0\in\real$ such that $Y(\eta_0)\leq-(\sigma+2)/(p-m)$, then $Y(\eta)<-(\sigma+2)/(p-m)$ for any $\eta\in(\eta_0,\infty)$. Moreover, in this case the corresponding orbit connects to the stable node $Q_3$ at infinity.
\end{lemma}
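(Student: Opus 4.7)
The plan is to split the proof into two parts: first the forward invariance of $\{Y \leq -(\sigma+2)/(p-m)\}$ along orbits on the unstable manifold of $P_0$, and then the convergence to $Q_3$. The starting point is a direct computation showing that on the threshold plane itself the $X$-dependent terms in $\dot Y$ cancel exactly, leaving
\[
\dot Y\big|_{Y=-(\sigma+2)/(p-m)} = Z(P_2) - Z, \qquad \dot Z = 0, \qquad \dot X = \frac{XL}{p-m} > 0,
\]
so the direction in which orbits cross the plane is governed solely by the sign of $Z - Z(P_2)$.

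For the invariance step I would take $\eta_0$ to be the first time along the orbit that the threshold value is attained, which exists because Lemma \ref{lem.P0} gives $Y \to 0$ near $P_0$. The crossing-from-above condition $\dot Y(\eta_0) \leq 0$ then forces $Z(\eta_0) \geq Z(P_2)$, and the contradiction scheme for a putative first later return $\eta_1$ combines $\dot Z < 0$ on $(\eta_0, \eta_1)$, giving $Z(\eta_1) < Z(\eta_0)$, with $\dot Y(\eta_1) \geq 0$, giving $Z(\eta_1) \leq Z(P_2)$. In the range $p \leq p_c(\sigma)$ (which automatically covers all of $N \in \{1,2\}$), $Z(P_2) \leq 0$ while $Z$ stays strictly positive along nontrivial orbits of $P_0$, so the return is excluded at once. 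For the remaining range $p_c(\sigma) < p < p_s(\sigma)$ with $N \geq 3$, the plan is to appeal to the center manifold of $Q_1$ computed in Lemma \ref{lem.Q1Q5}: an explicit substitution shows that in the original variables this manifold reads $V \sim (\sigma+2)(Z - Z(P_2))/[(p-m)X]$ near $Q_1$, where $V = Y + (\sigma+2)/(p-m)$, so the region $\{V < 0,\; Z > Z(P_2)\}$ in which the orbit sits immediately after $\eta_0$ lies on the side of the center manifold opposite to its attracting sheet; the transverse positive eigenvalue $(p-m)/(\sigma+2)$ then drives the orbit away from $Q_1$ with $V$ decreasing, ruling out any return to the threshold.

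For the convergence part, invariance yields $\dot Z < 0$ and $\dot X / X \geq L/(p-m) > 0$ for all $\eta > \eta_0$, so $Z$ is monotonically decreasing to some nonnegative limit while $X \to \infty$ at least exponentially. The same escape mechanism near $Q_1$ forces $|Y|/X \to \infty$, so on the Poincar\'e hypersphere the orbit leaves every neighborhood of $Q_1$; the classification of the critical points at infinity from Section \ref{sec.infty}, together with the stable-node character of $Q_3$ established in Lemma \ref{lem.Q23}, then singles out $Q_3$ as the unique $\omega$-limit. The main obstacle I anticipate is making the center manifold argument at $Q_1$ fully rigorous for orbits that are not yet in a small neighborhood of $Q_1$ right after $\eta_0$, which will require carefully propagating the sign information $Z - Z(P_2) > 0$ along the flow while using the strict monotonicity $\dot X > 0$ and the absence of recurrence on the two-dimensional unstable manifold of $P_0$.
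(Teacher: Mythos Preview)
Your treatment of the case $p\le p_c(\sigma)$ (and hence of $N\in\{1,2\}$) is essentially the paper's Step~1: on the threshold plane $\dot Y=Z(P_2)-Z<0$ because $Z(P_2)\le 0<Z$, so the plane is crossed transversally and never revisited. The paper then observes that in this range $-(\sigma+2)/(p-m)\le -(N-2)/m$, whence $\dot Y<0$ identically beyond $\eta_0$; this monotonicity of $Y$ is what drives the clean convergence argument to $Q_3$ via the differential inequality $dY/dX<\beta\bigl(1+\tfrac{p-m}{\sigma+2}Y\bigr)$, which you do not have.

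The genuine gap is in the range $p_c(\sigma)<p<p_s(\sigma)$. Your proposed mechanism---using the local form of the center manifold of $Q_1$ to argue that orbits with $V<0$ and $Z>Z(P_2)$ are repelled---cannot close the argument, and you have correctly identified why: the statement is global, the center manifold is local, and nothing prevents $Z$ from dropping below $Z(P_2)$ long before the orbit is anywhere near $Q_1$. Indeed $\dot Z<0$ throughout $\{Y<-(\sigma+2)/(p-m)\}$, so the sign of $Z-Z(P_2)$ is \emph{not} propagated by the flow, and once $Z<Z(P_2)$ the threshold plane has $\dot Y>0$ and a return is not excluded by your scheme. The paper's key device here is the cylinder
\[
Z=-\frac{N+\sigma}{N-2}\,Y(mY+N-2),
\]
raised over all $X\ge 0$. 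Two facts make it work: (i) the second-order expansion \eqref{interm13} of the unstable manifold of $P_0$ shows that every orbit $l_C$ starts \emph{outside} this cylinder precisely when $p<p_s(\sigma)$, and a monotonicity argument on $Z$ versus the cylinder's maximal amplitude keeps it outside; (ii) in the exterior of the cylinder and in the strip $-(N-2)/m<Y<-(\sigma+2)/(p-m)$ one computes $\dot Y<0$ directly. Together these give $\dot Y<0$ for all $\eta>\eta_0$, replacing your unverified hypothesis $Z>Z(P_2)$ by a barrier that is genuinely invariant for orbits emanating from $P_0$. This geometric barrier, not any property of $Q_1$, is the missing ingredient.
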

\begin{proof}
We divide the proof into two steps, according to the range of $p$.

\medskip

\noindent \textbf{Step 1: $N\in\{1,2\}$ or $N\geq3$ and $m<p\leq p_c(\sigma)$}. The flow of the system \eqref{PSsyst} across the plane $\{Y=-(\sigma+2)/(p-m)\}$ (with normal vector $(0,1,0)$) is given by the sign of the expression
\begin{equation}\label{flow.plane2}
F_2(X,Z)=-\frac{(\sigma+2)[m(N+\sigma)-p(N-2)]}{(p-m)^2}-Z,
\end{equation}
which is always negative if $m(N+\sigma)-p(N-2)>0$, the latter being ensured if either $N\in\{1,2\}$ or $N\geq3$ and $p\leq p_c(\sigma)=m(N+\sigma)/(N-2)$. This shows that, in the range under analysis, if $Y(\eta_0)<-(\sigma+2)/(p-m)$ for some $\eta_0\in\real$, then $Y(\eta)<-(\sigma+2)/(p-m)$ for any $\eta\in(\eta_0,\infty)$. We next infer from the first and third equation of the system \eqref{PSsyst} that $\dot{X}(\eta)>0$ and $\dot{Z}(\eta)<0$ for any $\eta>\eta_0$. Moreover,
\begin{equation}\label{interm21}
\dot{Y}(\eta)=X(\eta)\left(1+\frac{p-m}{\sigma+2}Y(\eta)\right)-mY(\eta)\left(Y(\eta)+\frac{N-2}{m}\right)-Z(\eta)<0,
\end{equation}
for any $\eta>\eta_0$, since, for any $p\in(m,p_c(\sigma)]$, we have
$$
\frac{N-2}{m}-\frac{\sigma+2}{p-m}=\frac{p(N-2)-m(N+\sigma)}{m(p-m)}\leq0.
$$
It thus follows that all three components $X(\eta)$, $Y(\eta)$, $Z(\eta)$ are monotone along the trajectory, on $(\eta_0,\infty)$. In particular, $0<Z(\eta)<Z(\eta_0)$ for any $\eta\in(\eta_0,\infty)$, hence $Z(\eta)$ is bounded. Since there are no finite critical points in the half-space $\{Y<-(\sigma+2)/(p-m)\}$, it follows that either $X(\eta)\to\infty$ or $Y(\eta)\to-\infty$, or both, as $\eta\to\infty$. If $X(\eta)$ has a finite limit as $\eta\to\infty$, this forces $Y(\eta)\to-\infty$ as $\eta\to\infty$ and thus $X(\eta)/Y(\eta)$, $Z(\eta)/Y(\eta)$ converge to zero as $\eta\to\infty$, which means that the trajectory enters $Q_3$. On the contrary, if $X(\eta)\to\infty$ as $\eta\to\infty$, we readily infer from the first and second equation of the system \eqref{PSsyst}, the already established monotonicity of $Y(\eta)$ and the fact that $-mY^2-(N-2)Y-Z<0$ on $(\eta_0,\infty)$, that
$$
\frac{dY}{dX}<\frac{1+(p-m)/(\sigma+2)Y}{2-(m-1)Y}<\beta\left(1+\frac{p-m}{\sigma+2}Y\right),
$$
which gives, by comparison with the solutions of the actual differential equation, that, along the considered trajectory, one has
$$
1+\frac{p-m}{\sigma+2}Y(\eta)<-Ke^{\beta X(\eta)},
$$
hence $Y(\eta)/X(\eta)\to-\infty$, and thus the trajectory reaches the critical point $Q_3$, as claimed.

\medskip

\noindent \textbf{Step 2: $N\geq3$ and $p_c(\sigma)<p<p_s(\sigma)$}. Then \eqref{flow.plane2} itself does not end the argument as in the previous range, so that we also consider the cylinder with explicit equation \eqref{iso1}, but considered now for any $X\geq0$. Taking the normal direction to it as
$$
\overline{\nu}=\left(0,\frac{N+\sigma}{N-2}(2mY+N-2),1\right),
$$
the flow of the system \eqref{PSsyst} across it is given by the scalar product between $\overline{\nu}$ and its vector field, that is,
\begin{equation}\label{flow.cyl}
\begin{split}
E(X,Y;p)=\frac{N+\sigma}{N-2}&\left[(p_s(\sigma)-p)Y^2(mY+N-2)\right.\\&\left.+X\left(1+\frac{p-m}{\sigma+2}Y\right)(2mY+N-2)\right].
\end{split}
\end{equation}
In our range the first term in \eqref{flow.cyl} is clearly positive. As for the second, it can only be negative provided
\begin{equation}\label{reg.cyl}
-\frac{\sigma+2}{p-m}<Y<-\frac{N-2}{2m}.
\end{equation}
But despite this fact, any orbit lying outside the cylinder \eqref{iso1} when crossing the plane $\{Y=-(N-2)/2m\}$ will continue outside the cylinder. Indeed, on the plane $\{Y=-(N-2)/2m\}$ the cylinder \eqref{iso1} reaches its maximum amplitude with respect to the coordinate $Z$, while the $Z$ component on any trajectory of the system \eqref{PSsyst} will keep increasing until intersecting the plane $\{Y=-(\sigma+2)/(p-m)\}$, as indicated by the third equation in \eqref{PSsyst}. Thus, $Z(\eta)$ will in any case not decrease in order to reach the surface of the cylinder \eqref{iso1} in the region \eqref{reg.cyl}. Moreover, if for some $\eta\in\real$,
$$
-\frac{N-2}{m}<Y(\eta)<-\frac{\sigma+2}{p-m}, \qquad Z(\eta)>-\frac{N+\sigma}{N-2}Y(\eta)(mY(\eta)+N-2),
$$
we have for any $\sigma>-2$
\begin{equation*}
\begin{split}
\dot{Y}(\eta)&=X(\eta)\left(1+\frac{p-m}{\sigma+2}Y(\eta)\right)-Y(\eta)(mY(\eta)+N-2)-Z(\eta)\\
&<X(\eta)\left(1+\frac{p-m}{\sigma+2}Y(\eta)\right)+\frac{\sigma+2}{N-2}Y(\eta)(mY(\eta)+N-2)<0,
\end{split}
\end{equation*}
while it is completely obvious that $\dot{Y}(\eta)<0$ if $Y(\eta)\leq-(N-2)/m$. Thus, $Y(\eta)$ is a decreasing function of $\eta$ on the interval $(\eta_0,\infty)$ on any trajectory of the system \eqref{PSsyst} for which there exists $\eta_0\in\real$ such that
$$
\left\{Y(\eta_0)<-\frac{\sigma+2}{p-m}, \ Z(\eta_0)>\max\left\{0,-\frac{N+\sigma}{N-2}Y(\eta_0)(mY(\eta_0)+N-2)\right\}\right\}.
$$
Similar considerations as in the end of Step 1 prove that these orbits will reach the critical point $Q_3$. It only remains to prove that indeed, the orbits on the unstable manifold of $P_0$ go out into the exterior of the cylinder \eqref{iso1} and thus the condition on $Z(\eta_0)$ is implicit once $Y(\eta_0)<-(\sigma+2)/(p-m)$ for some $\eta_0\in\real$. The Stable Manifold Theorem \cite[Section 2.7]{Pe} states that the unstable manifold of $P_0$ is tangent to the region of the plane spanned by the eigenvectors given by \eqref{eigen.P0} and limited by the invariant planes $\{X=0\}$ and $\{Z=0\}$, that is, to vectors of the form
$$
v(\lambda):=\lambda e_1+(1-\lambda)e_3, \qquad \lambda\in(0,1).
$$
But it is obvious that $e_1$ goes into the exterior of the cylinder \eqref{iso1} (as it points out into the positive half-space $\{Y>0\}$), while the orbit tangent to the only vector $v(0)$ having no component on $e_1$ points into the exterior of the cylinder, as shown by the expansion \eqref{interm13}. It is then easy to see that any other orbit tangent to the unstable subspace of $P_0$ composed by the vectors $v(\lambda)$ points into the exterior of the cylinder \eqref{iso1} and will remain there forever, ending the proof.
\end{proof}
On the other hand, we can also characterize the trajectories of the system \eqref{PSsyst} which do not cross the plane $\{Y=-(\sigma+2)/(p-m)\}$ as follows.
\begin{lemma}\label{lem.ret}
Let $\sigma\geq0$ and $(X,Y,Z)(\eta)$ be a trajectory of the system \eqref{PSsyst} such that there exists $\eta_0\in\real$ with $X(\eta_0)>0$, $Z(\eta_0)>0$ and
\begin{equation}\label{interm22}
-\frac{\sigma+2}{p-m}<Y(\eta)<\frac{2}{m-1}, \qquad {\rm for \ any} \ \eta\in(\eta_0,\infty).
\end{equation}
Then, this trajectory enters either the critical point $Q_1$ or $Q_{\gamma_0}$ with $\gamma_0$ defined in \eqref{eq.gamma}.
\end{lemma}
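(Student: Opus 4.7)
The plan is to exploit the monotonicity of $X$ and $Z$ forced by the two-sided bound on $Y$, and then combine it with the classification of critical points at infinity from Section \ref{sec.infty} to pin down the destination of the trajectory. First I would observe that the constraint $Y(\eta) < 2/(m-1)$ turns the first equation of \eqref{PSsyst} into $\dot X = X(2-(m-1)Y) > 0$ on $(\eta_0,\infty)$, while $Y(\eta) > -(\sigma+2)/(p-m)$ makes $\dot Z = Z(\sigma+2+(p-m)Y) > 0$ there. Hence $X$ and $Z$ are strictly increasing from the positive values $X(\eta_0)$ and $Z(\eta_0)$, and they admit monotone limits $X_\infty \in (X(\eta_0),+\infty]$ and $Z_\infty \in (Z(\eta_0),+\infty]$.

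The second step is to rule out $X_\infty < \infty$. If additionally $Z_\infty < \infty$, the whole trajectory is bounded, so its $\omega$-limit set is nonempty and, by invariance, must lie in the plane $\{X = X_\infty,\, Z = Z_\infty\}$ and satisfy $\dot X = \dot Z = 0$ there; since $X_\infty, Z_\infty > 0$, these two equalities would force $Y = 2/(m-1)$ and $Y = -(\sigma+2)/(p-m)$ simultaneously, which is impossible. If instead $X_\infty < \infty$ but $Z_\infty = +\infty$, the term $-Z$ dominates the right-hand side of the second equation of \eqref{PSsyst} (all other terms being bounded since $X$ and $Y$ are bounded), so $\dot Y \to -\infty$, contradicting the boundedness of $Y$. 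Thus $X_\infty = +\infty$, and passing to the projected coordinates $(x,y,z) = (1/X,Y/X,Z/X)$ of the system \eqref{PSinf1} we immediately obtain $x(\eta) \to 0$ and $y(\eta) \to 0$ as $\eta \to \infty$, the latter because $Y$ remains bounded.

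Consequently, the $\omega$-limit set of the trajectory on the compactified Poincar\'e hypersphere lies on the line $\{x=0,\,y=0\}$, which is precisely the union of $Q_1$ (at $z=0$), $Q_4$ (at $z=+\infty$), and the continuum $\{Q_\gamma\}_{\gamma \in (0,1)}$ (at $z = \sqrt{1-\gamma^2}/\gamma \in (0,\infty)$). The critical points $Q_2, Q_3, Q_5$ are excluded because they all require $|Y| \to \infty$; the point $Q_4$ is excluded by Lemma \ref{lem.Q4}; and Lemma \ref{lem.Qg} shows that among the $Q_\gamma$ with $\gamma \in (0,1)$, only $Q_{\gamma_0}$ admits orbits arriving from the finite part of the phase space $\{x>0\}$. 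Therefore every accumulation point of the trajectory must be either $Q_1$ or $Q_{\gamma_0}$.

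The main obstacle will be to upgrade this accumulation statement into the conclusion that the trajectory actually enters one of these two points, i.e.\ that the $\omega$-limit reduces to a singleton. The argument I would use is that the $\omega$-limit, being a connected, closed, invariant subset of the critical line $\{x=0,\,y=0\}$, is an interval; any nontrivial interval joining $Q_1$ and $Q_{\gamma_0}$ would necessarily contain a continuum of $Q_\gamma$ with $\gamma \neq \gamma_0$, but the local analysis around these points in Lemma \ref{lem.Qg} shows that no stable, unstable or center manifold component exists in $\{x>0\}$, precluding the trajectory from accumulating on such a continuum. This forces the $\omega$-limit to collapse to either $\{Q_1\}$ or $\{Q_{\gamma_0}\}$, which is the claim of the lemma.
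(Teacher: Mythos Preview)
Your first two paragraphs are essentially correct and parallel the paper's argument, with minor but equally valid variations in handling the sub-cases $X_\infty<\infty$. The difficulty, which you correctly flag as the main obstacle, is in the last two paragraphs: collapsing the $\omega$-limit set from a potentially nontrivial subarc of the critical line $\{x=0,\,y=0\}$ down to a single point.

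Your proposed argument for this step has a genuine gap. You invoke Lemma \ref{lem.Qg} to say that for $\gamma\neq\gamma_0$ the stable, unstable and center manifolds of $Q_\gamma$ are all contained in $\{x=0\}$, and conclude that this ``preclud[es] the trajectory from accumulating on such a continuum.'' But the local invariant-manifold structure at an individual equilibrium only controls orbits that \emph{converge} to that equilibrium; it does not by itself forbid an orbit in $\{x>0\}$ from drifting back and forth along a whole arc of the equilibrium line and thereby having that arc as its $\omega$-limit set. Lines of degenerate equilibria can attract orbits whose $\omega$-limit is a nontrivial interval, and nothing in Lemma \ref{lem.Qg} excludes this behaviour.

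The paper resolves this by leaving the phase space and returning to the profile $f(\xi)$. A nontrivial interval as $\omega$-limit translates into the bound $A_1\xi^{-\sigma/(p-1)}\leq f(\xi)\leq A_2\xi^{-\sigma/(p-1)}$ for large $\xi$ with $A_1<A_2$. Setting $g(\xi)=\xi^{\sigma/(p-1)}f(\xi)$, the paper derives the ODE \eqref{ode.g} satisfied by $g$ and evaluates it along a sequence $\xi_k\to\infty$ of local maxima of $g$. At such points $g'(\xi_k)=0$ and $(g^m)''(\xi_k)\leq0$, while the term $-\xi_k^{L/(p-1)}g(\xi_k)/(p-1)$ is negative and unbounded in $k$; since $\sigma\geq0$, none of the remaining terms can compensate it, yielding a contradiction. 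This forces $g$ to be eventually monotone, and hence the $\omega$-limit is a single point, necessarily $Q_1$ or $Q_{\gamma_0}$. This profile-level maximum argument is the missing ingredient in your proposal.
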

\begin{proof}
The condition \eqref{interm22}, together with the first and third equation of the system \eqref{PSsyst}, imply that $X(\eta)$ and $Z(\eta)$ are increasing functions of $\eta$ on the trajectory, so that, there exist
$$
X_{\infty}:=\lim\limits_{\eta\to\infty}X(\eta)>0, \qquad Z_{\infty}:=\lim\limits_{\eta\to\infty}Z(\eta)>0.
$$
Assume for contradiction that $X_{\infty}<\infty$. Then either $Z_{\infty}<\infty$ and arguments as in the proof of \cite[Proposition 4.1]{ILS24}  based on subsequences of local maxima and minima of $Y(\eta)$ then show that the $\omega$-limit set must be a finite critical point, which is impossible as such a point does not exist in the range of \eqref{interm22} outside the invariant planes $\{X=0\}$ and $\{Z=0\}$, or $Z_{\infty}=\infty$, which also gives $Y(\eta)/Z(\eta)\to0$ and $X(\eta)/Z(\eta)\to0$ as $\eta\to\infty$ and thus our trajectory enters the critical point $Q_4$, contradicting Lemma \ref{lem.Q4}. We infer thus that $X_{\infty}=\infty$ and, consequently, $Y(\eta)/X(\eta)\to0$. Passing now to $(x,y,z)$-variables and seeing the trajectory in the system \eqref{PSinf1}, we infer that along it, components $x$ and $y$ tend to zero. We thus infer that the $\omega$-limit set of our orbit must be a segment of the critical line $\{x=0, y=0\}$ of the system \eqref{PSinf1}. Coming back to the space of profiles $(\xi,f(\xi))$, the latter conclusion is equivalent to the corresponding profile oscillating between two hyperbolas
$$
A_1\xi^{-\sigma/(p-1)}\leq f(\xi)\leq A_2\xi^{-\sigma/(p-1)}, \qquad \xi\geq\xi_0>0,
$$
for some constants $0\leq A_1<A_2<\infty$ and some $\xi_0>0$ very large. Letting then $g(\xi):=\xi^{\sigma/(p-1)}f(\xi)$, we find by direct calculation that
\begin{equation}\label{ode.g}
\begin{split}
\xi^2(g^m)''(\xi)&-\left(\frac{2m\sigma}{p-1}-N+1\right)\xi(g^m)'(\xi)+\frac{m\sigma}{p-1}\left(\frac{m\sigma}{p-1}-N+2\right)g^m(\xi)\\
&-\xi^{L/(p-1)}\left[\frac{1}{p-1}g(\xi)-\beta\xi g'(\xi)\right]+\xi^{-\sigma/(p-1)}g^p(\xi)=0.
\end{split}
\end{equation}
Let $(\xi_{k})_{k\geq1}$ be a sequence of local maxima of $g$, such that $\xi_{k}\to\infty$ and
$$
g(\xi_k)\to\limsup\limits_{\xi\to\infty}g(\xi)=L\in(0,\infty).
$$
Evaluating \eqref{ode.g} at $\xi=\xi_k$ for $k\geq1$, we obtain a contradiction, since $-\xi_{k}^{L/(p-1)}g(\xi_k)$ cannot be compensated by any other term with a plus sign in \eqref{ode.g} for $k$ sufficiently large. We reach a contradiction with the existence of an oscillating $\omega$-limit of $l_C$, as it follows that for $\xi\geq\xi_0$ sufficiently large, $g(\xi)$ has to be monotone. Hence, the $\omega$-limit set should be a critical point and Lemmas \ref{lem.Q1Q5} and \ref{lem.Qg} then establish that the only possible point is either $Q_1$ or $Q_{\gamma_0}$, as claimed.
\end{proof}

\section{The range $-2<\sigma<0$. Proof of Theorem \ref{th.negative}}\label{sec.negative}

We are now in a position to prove our main results, and we begin with the existence for $\sigma\in(-2,0)$ (or $\sigma\in(-1,0)$ if $N=1$). By gathering \eqref{interm1} and \eqref{interm2}, we can express the unstable manifold of the critical point $P_0$ as the following one-parameter family of orbits in a neighborhood of $P_0$:
\begin{equation}\label{manifP0}
(l_C): \quad Y(\eta)\sim\frac{X(\eta)}{N}-\frac{CX(\eta)^{(\sigma+2)/2}}{N+\sigma}, \qquad {\rm as} \ \eta\to-\infty,
\end{equation}
for any $C\in(0,\infty)$, extended with its limits $l_0$ (for $C=0$), which is the unique orbit contained in the invariant plane $\{Z=0\}$, and $l_{\infty}$ (corresponding to $C=\infty$, or $C^{-1}=0$ in the alternative writing by replacing $X(\eta)$ by $Z(\eta)$ according to \eqref{interm1}), which is the unique orbit contained in the invariant plane $\{X=0\}$. We next prove Theorem \ref{th.negative} by employing a shooting method with respect to the parameter $C\in(0,\infty)$.

\begin{proof}[Proof of Theorem \ref{th.negative}]
Let us introduce the following three sets:
\begin{equation*}
\begin{split}
&\mathcal{A}=\{C\in(0,\infty): Y(\eta)<0 \ {\rm on} \ l_C \ {\rm for \ any} \ \eta\in\real \ {\rm and} \ \lim\limits_{\eta\to\infty}Y(\eta)=-\infty\},\\
&\mathcal{C}=\{C\in(0,\infty): {\rm there \ exists} \ \eta_0\in\real, Y(\eta_0)>0 \ {\rm on} \ l_C\},\\
&\mathcal{B}=(0,\infty)\setminus(\mathcal{A}\cup\mathcal{C}).
\end{split}
\end{equation*}
We show first that $\mathcal{A}$ is nonempty and open. Let us observe that the limit in the definition of $\mathcal{A}$ implies that there is $\overline{\eta}\in\real$ such that $Y(\eta)<-(\sigma+2)/(p-m)$ for $\eta>\overline{\eta}$, which, together with Lemma \ref{lem.noret}, entail that $l_C$ is an orbit entering the stable node $Q_3$, which also gives the openness of $\mathcal{A}$. Since Lemma \ref{lem.X0} establishes that $l_{\infty}$ satisfies the definition of $\mathcal{A}$, the fact that $Q_3$ is a stable node and the standard continuity with respect to $C$ on the unstable manifold of $P_0$ further imply that there is $C^*>0$ such that $(C^*,\infty)\subseteq\mathcal{A}$. We turn now our attention to the set $\mathcal{C}$. Its openness follows directly by the definition and the continuity with respect to $C$ on the unstable manifold of $P_0$. Moreover, Lemma \ref{lem.Z0} shows that the orbit $l_0$ entirely lies in the half-space $\{Y>0\}$ and enters $P_3$. Once more, the continuity with respect to $C$ gives that there is $C_*>0$ such that $(0,C_*)\subseteq\mathcal{C}$.

Thus, we infer from standard topological arguments that $\mathcal{B}\neq\emptyset$. Pick then $C\in\mathcal{B}$ and let us analyze the orbit $l_C$. The fact that $C\not\in\mathcal{C}$ implies that $Y(\eta)\leq0$ for $\eta\in\real$ on the orbit $l_C$. The fact that $C\not\in\mathcal{A}$ implies that either there exists $\eta_0\in\real$ such that $Y(\eta_0)=0$, or $Y(\eta)<0$ for any $\eta\in\real$ but the limit of $Y(\eta)$ is not equal to $-\infty$, that is, $l_C$ does not end up by entering the node $Q_3$. We rule out the first of these two alternatives.

Assume for contradiction that there exists $\eta_0\in\real$ such that $Y(\eta_0)=0$. Since the orbit $l_C$ is not fully contained in the plane $\{Y=0\}$, one readily infers that $\eta_0$ is a strict local maximum for the function $Y(\eta)$, that is, $Y(\eta_0)=\dot{Y}(\eta_0)=0$ and $Y''(\eta_0)\leq0$. The first two equalities and the second equation of the system \eqref{PSsyst} give that $X(\eta_0)=Z(\eta_0)$. We next compute
\begin{equation*}
\begin{split}
Y''(\eta_0)&=\dot{X}(\eta_0)-(N-2)\dot{Y}(\eta_0)-\dot{Z}(\eta_0)-2mY(\eta_0)\dot{Y}(\eta_0)\\
&+\frac{p-m}{\sigma+2}(X(\eta_0)\dot{Y}(\eta_0)+\dot{X}(\eta_0)Y(\eta_0))\\
&=X(\eta_0)(2-(m-1)Y(\eta_0))-Z(\eta_0)(\sigma+2+(p-m)Y(\eta_0))\\
&=2(X(\eta_0)-Z(\eta_0))-\sigma Z(\eta_0)=-\sigma Z(\eta_0)>0,
\end{split}
\end{equation*}
and we reach a contradiction with the fact that $Y''(\eta_0)\leq0$. Hence, it is not possible that $Y(\eta_0)=0$ for some $\eta_0\in\real$ on the orbit $l_C$ with $C\in\mathcal{B}$. This implies that $Y(\eta)<0$ for any $\eta\in\real$, but $l_C$ does not enter the node $Q_3$, which means that $Y(\eta)$ is bounded from below. Indeed, an application of Lemma \ref{lem.noret} implies that
$$
-\frac{\sigma+2}{p-m}\leq Y(\eta)<0, \qquad \eta\in\real.
$$
We observe once more that $X(\eta)$ is increasing on $l_C$ and thus there is
$$
X_{\infty}=\lim\limits_{\eta\to\infty}X(\eta)>0.
$$
If $X_{\infty}<\infty$, then the $\omega$-limit set of the orbit $l_C$ would be either a (finite) critical point with $X=X_{\infty}$, $Y<0$ (and such a point does not exist), or a critical point with $Z(\eta)\to\infty$, $Y(\eta)$ bounded, $X(\eta)$ bounded (which means it would be the critical point $Q_4$, in contradiction with Lemma \ref{lem.Q4}), or a closed orbit contained in the plane $\{X=X_{\infty}\}$. The latter is also not possible, as such an orbit would be itself a solution to the system \eqref{PSsyst} according to \cite[Theorem 2, Section 3.2]{Pe}, but the condition $X=X_{\infty}\in(0,\infty)$ and the first equation of \eqref{PSsyst} lead to $Y=2/(m-1)>0$, which is a contradiction.

We infer thus that $X_{\infty}=+\infty$. The boundedness of $Y(\eta)$ then gives that $Y(\eta)/X(\eta)\to0$ as $\eta\to\infty$. Since $X$ is a monotone function of $\eta$, we are thus allowed to define a function $Z(X)$ along the orbit $l_C$, which solves the differential equation
$$
\frac{dZ}{dX}=\frac{Z(\sigma+2+(p-m)Y)}{X(2-(m-1)Y)}<\frac{(\sigma+2)Z}{2X},
$$
since $Y<0$ on the orbit $l_C$. By integration and comparison, we infer that there is $C>0$ given by the initial condition such that
$$
Z(\eta)<CX(\eta)^{(\sigma+2)/2}, \qquad \eta\in\real,
$$
hence
$$
\lim\limits_{\eta\to\infty}\frac{Z(\eta)}{X(\eta)}<\lim\limits_{\eta\to\infty}CX(\eta)^{\sigma/2}=0,
$$
as we already have shown that $X(\eta)\to\infty$, while $\sigma<0$. We have thus deduced that the orbit $l_C$ enters the critical point at infinity $Q_1$ (characterized precisely by the conditions $X\to\infty$, $Z/X\to0$ and $Y/X\to0$), as claimed.
\end{proof}

\section{$\sigma\geq0$ small. Existence and multiplicity}\label{sec.mult}

This section is devoted to the proof of Theorem \ref{th.mult}. Thus, throughout this section, we assume that $N\geq2$ and $\sigma\geq0$. We begin by recalling the following result in the limit case $p=m$, that serves as a starting point for the proof. Let us recall that the orbits on the unstable manifold of $P_0$ form a one-parameter family $(l_C)_{C\geq0}$ defined in \eqref{manifP0}.
\begin{lemma}\label{lem.pequalm}
Let $p=m$ and $N\geq2$. Then there exists $\sigma_1>0$ such that, for any $\sigma\in[0,\sigma_1)$, there is $C(\sigma)>0$ such that the orbits $l_C$ with $C\in(0,C(\sigma))$, together with the unique orbit contained in the unstable manifold of the critical point $P_3$, present an infinite number of oscillations around the plane $\{Y=0\}$, that is, changes of sign of $Y(\eta)$ for $\eta\in\real$.
\end{lemma}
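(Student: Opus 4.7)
\medskip

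\noindent \textbf{Proof plan.} The approach exploits a fundamental simplification at $p=m$: the third equation of \eqref{PSsyst} decouples to $\dot{Z}=(\sigma+2)Z$, so $Z(\eta)=Z_0 e^{(\sigma+2)\eta}$ is explicit and the remaining $(X,Y)$-dynamics become two-dimensional and non-autonomous, driven by this known $Z$. The plan is to combine this reduction with the oscillation theory in the particular limit $\sigma=0$, $p=m$ (already studied in \cite{IS22, IL22}), and then to propagate the result to $\sigma>0$ sufficiently small through a structural stability argument for a spiral sink in a rescaled planar system.

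I would first settle the case $\sigma=0$, $p=m$ separately. In this degenerate case $\beta=0$, so the ansatz \eqref{SSS} reduces to the separable form $u(x,t)=(T-t)^{-1/(m-1)}f(|x|)$, and \eqref{SSODE} becomes the Lane--Emden-type autonomous equation
\begin{equation*}
(f^m)''(\xi)+\frac{N-1}{\xi}(f^m)'(\xi)-\frac{1}{m-1}f(\xi)+f^m(\xi)=0.
\end{equation*}
Its positive radial solutions with $f(0)>0$ or with local behavior \eqref{beh.P3} at the origin are known, in any dimension $N\geq 2$, to spiral in the radial phase plane around the constant equilibrium $f^{*}=(1/(m-1))^{1/(m-1)}$, producing infinitely many critical points. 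Through the change of variables \eqref{PSchange}, each such critical point of $f$ corresponds to a sign change of $Y(\eta)$. Identifying the orbits $l_C$ for $C>0$ small with profiles satisfying \eqref{beh.P0} and the unique orbit on the one-dimensional unstable manifold of $P_3$ with the profile satisfying \eqref{beh.P3} then yields the statement at $\sigma=0$.

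To extend to small $\sigma>0$, I would redo the infinity analysis of $Q_{\gamma_0}$ at $p=m$, since Lemma \ref{lem.Qg} degenerates (the coefficient $L/(p-m)$ diverges). After introducing the rescaled coordinate $U=X/Z^{2/(\sigma+2)}$ (chosen to absorb the exponential growth of $Z$) and using $s=\ln Z$ as the new independent variable, the system in $(U,Y)$ becomes autonomous, with a critical point corresponding to $Q_{\gamma_0}$. Its linearization matrix depends continuously on $\sigma$ and, by the $\sigma=0$ oscillation property, has complex eigenvalues at $\sigma=0$. Taking $\sigma_1>0$ as the first value at which the discriminant of this linearization vanishes, the critical point remains a spiral sink for every $\sigma\in[0,\sigma_1)$. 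A trapping annulus around this spiral, combined with a planar Poincar\'e--Bendixson argument, then forces infinitely many oscillations of $Y$ along every trajectory captured by it. A continuity argument shows that, for $C\in(0,C(\sigma))$ sufficiently small, the orbits $l_C$ remain close to the $\sigma=0$ orbit long enough to be caught inside the trapping annulus, and the same holds for the orbit emanating from the one-dimensional unstable manifold of $P_3$.

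The main obstacle is precisely the passage from \emph{finitely many} oscillations (directly propagated by continuity from $\sigma=0$) to \emph{infinitely many} oscillations: continuity alone only controls finitely many sign changes over any bounded $\eta$-interval. The key device is the explicit identification of the spiral sink in the rescaled planar system and the construction of an invariant trapping region around it, whose structural stability guarantees an infinite sequence of oscillations intrinsically, not by perturbation. Verifying that the orbits of interest enter the basin of attraction of this spiral, by carefully controlling the coupling between the $(X,Y)$-dynamics and the exponential growth of $Z$, constitutes the technical heart of the argument.
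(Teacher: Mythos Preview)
The paper does not prove this lemma independently; it simply invokes \cite[Theorems~1.4--1.5]{IS22} and records the dictionary $X=x^2$, $Y=y$, $Z=xz$ between the present variables and those of \cite{IS22}. Your proposal, by contrast, sketches an actual argument, and its overall architecture---handle $\sigma=0$ via the Lane--Emden reduction and Bessel-type oscillations around the constant $f^{*}=(1/(m-1))^{1/(m-1)}$, then identify a spiral attractor governing the large-$\xi$ asymptotics and use its structural stability in $\sigma$---is the right idea, and is in spirit what \cite{IS22} carries out.

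There is, however, a concrete gap in your extension to $\sigma>0$. At $p=m$ the third equation of \eqref{PSsyst} does decouple to $\dot Z=(\sigma+2)Z$, and your variable $U=X/Z^{2/(\sigma+2)}$ indeed obeys the autonomous law $\dot U=-(m-1)UY$. But the $Y$-equation becomes
\[
\dot Y \;=\; U\,Z^{2/(\sigma+2)}-(N-2)Y-Z-mY^{2},
\]
and for $\sigma>0$ the two forcing terms $U\,Z^{2/(\sigma+2)}$ and $Z$ carry \emph{distinct} exponential rates in $\eta$ (namely $e^{2\eta}$ versus $e^{(\sigma+2)\eta}$). No reparametrization of the independent variable can render both autonomous simultaneously, so the $(U,Y)$-system is genuinely non-autonomous once $\sigma\neq0$, and your planar Poincar\'e--Bendixson/trapping-annulus step does not apply as written. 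The rescaling that works perfectly at $\sigma=0$ simply does not survive the perturbation.

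The remedy is not to force a two-dimensional reduction but to work in a three-dimensional autonomous system (for instance the $(x,y,z)$-variables of \cite{IS22} recalled in the paper's proof) and analyze the relevant equilibrium there: its linearization has a pair of complex-conjugate eigenvalues precisely for $\sigma$ below an explicit threshold (this is the origin of the bounds on $\sigma_1$ in the Remark following the lemma), and the local spiral structure then yields infinitely many sign changes of $Y$ intrinsically. Your diagnosis of the main obstacle---that continuity in $\sigma$ transfers only finitely many oscillations---is correct, and so is the cure you propose; only the coordinate chart needs repair.
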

\begin{proof}
This has been proved recently in \cite[Theorem 1.4, Theorem 1.5, Section 2.3]{IS22} and their proofs (contained in \cite[Section 6 and Section 7]{IS22}), noticing that the critical points $P_2$, respectively $Q_1$, in the notation of \cite{IS22}, correspond to the points $P_3$, respectively $P_0$, in our current notation. For the proof to be complete, since in \cite{IS22} we have used different phase space variables, we notice that, with respect to the variables $(x,y,z)$ employed in \cite[Lemma 3.1]{IS22}, which are
$$
x(\xi)=\frac{1}{\sqrt{m(m-1)}}\xi f^{(1-m)/2}(\xi), \qquad y(\xi)=\frac{\xi f'(\xi)}{f(\xi)}, \qquad z(\xi)=\sqrt{\frac{m-1}{m}}\xi^{\sigma+1}f(\xi)^{(m-1)/2},
$$
our variables $(X,Y,Z)$ of the system \eqref{PSsyst} relate in the following way:
\begin{equation}\label{changepm}
X(\xi)=x^2(\xi), \qquad Y(\xi)=y(\xi), \qquad Z(\xi)=x(\xi)z(\xi).
\end{equation}
Thus, the critical point denoted by $Q_1$ in \cite[Lemma 3.1]{IS22} matches with our critical point $P_0$, while the critical point denoted by $P_2$ in \cite{IS22} matches with our critical point $P_3$, as one can readily check from \eqref{changepm}. Moreover, since the variable $Y$ is the same in both systems, any oscillation with respect to the plane $\{y=0\}$ in the notation of \cite{IS22} corresponds to an oscillation with respect to the plane $\{Y=0\}$ in our current setting.
\end{proof}

\noindent \textbf{Remark.} As an outcome of the results in \cite{IS22, IL22}, we have an upper estimate for $\sigma_1$, namely
$$
\sigma_1<\left\{\begin{array}{ll}\frac{N(m-1)}{m+1}, & N\in\{2,3\}, \\ \frac{2(N-1)(m-1)}{3m+1}, & N\geq4.\end{array}\right.
$$

With these preparations, we are in a position to prove what we consider to be the central result of this work, namely Theorem \ref{th.mult}. Fix $\sigma\geq0$.
\begin{proof}[Proof of Theorem \ref{th.mult}]
The proof will be divided into several steps, for the reader's convenience. For the whole proof, fix $p\in(m,p_c(\sigma))$, where $p_c(\sigma)$ has been introduced in \eqref{crit.p}.

\medskip

\noindent \textbf{Step 1. A surface built over the trajectory entering $Q_{\gamma_0}$.} We begin with the following geometric construction in the phase space associated to the system \eqref{PSsyst}. Let $r_0$ be the unique trajectory entering the critical point $Q_{\gamma_0}$ according to Lemma \ref{lem.Qg}. Since, as shown in the first step of the proof of Lemma \ref{lem.noret}, the plane $\{Y=-(\sigma+2)/(p-m)\}$ has a negative direction of the flow at every point, it follows that $Y(\eta)>-(\sigma+2)/(p-m)$ for any $\eta\in\real$ on the trajectory $r_0$. We have thus the following alternative:

$\bullet$ either on the trajectory $r_0$ we have
\begin{equation}\label{alter1}
-\frac{\sigma+2}{p-m}<Y(\eta)<\frac{2}{m-1}, \qquad {\rm for \ any} \ \eta\in\real, \qquad \limsup\limits_{\eta\to-\infty}Y(\eta)<\frac{2}{m-1}.
\end{equation}
In this case, $X(\eta)$ and $Z(\eta)$ are both increasing functions of $\eta$, as it follows from the first and third equation of the system \eqref{PSsyst}. This monotonicity, together with arguments as in the proof of \cite[Proposition 4.10]{ILS24}, show that $r_0$ stems from one of the critical points $P_0$ or $P_1$ (since the last condition in \eqref{alter1} avoids $P_3$), that is, $X(\eta)\to0$ as $\eta\to-\infty$. Moreover, on the one hand, on the projection of $r_0$ on the plane $\{Z=0\}$, the function $X(\eta)$ is invertible and we can see this curve as the graph of a function $Y=\psi(X)$. On the other hand, above every point $(X,\psi(X))$, there is a unique value of $Z=Z(X)$ such that $(X,\psi(X),Z(X))\in r_0$, due to the monotonicity of $Z$ along the trajectory $r_0$. We thus build the surface $\mathcal{S}$ to be the cylinder $(X,\psi(X),Z)$ raised vertically over the curve $Y=\psi(X)$ which is the projection of $r_0$ onto the plane $\{Z=0\}$. Obviously, $r_0$ is contained in $\mathcal{S}$.
By the inverse function theorem, we readily find that
$$
\psi'(X)=\frac{X-(N-2)\psi(X)-Z(X)-m\psi(X)^2+(p-m)/(\sigma+2)X\psi(X)}{X(2-(m-1)\psi(X))},
$$
hence, the flow of the system \eqref{PSsyst} across this surface $\mathcal{S}$, with normal vector
$$
\overline{n}(X,Y,Z)=(-\psi'(X),1,0),
$$
is given by the sign of the expression
\begin{equation}\label{flow.alter1}
\begin{split}
F(X,Z)&=-X(2-(m-1)\psi(X))\psi'(X)+X-(N-2)\psi(X)-Z\\&-m\psi(X)^2+\frac{p-m}{\sigma+2}X\psi(X)=Z(X)-Z.
\end{split}
\end{equation}
This proves that the direction of the flow across $\mathcal{S}$ is separated by the trajectory $r_0$: above it (that is, with $Z>Z(X)$) orbits cross the surface pointing into the negative direction and below it (that is, with $Z<Z(X)$) into the positive direction of the $Y$ variable. Consequently, there cannot be trajectories of the system tangent to $\mathcal{S}$: it is easy to show that a tangency point must lie at $Z=Z(X)$, that is, on $r_0$, and this contradicts the local uniqueness of solutions, since $r_0$ is itself a solution to \eqref{PSsyst}. Notice that, for the limiting case $\sigma=0$, the trajectory $r_0$ is exactly the line
$$
X(\eta)=Z(\eta), \qquad Y(\eta)=0, \qquad \eta\in\real,
$$
and the surface $\mathcal{S}$ coincides with the plane $\{Y=0\}$.

\medskip

$\bullet$ or there is $\eta_0\in\real\cup\{-\infty\}$ such that on the trajectory $r_0$ we have
$$
Y(\eta_0)=\frac{2}{m-1}, \qquad -\frac{\sigma+2}{p-m}<Y(\eta)<\frac{2}{m-1}, \qquad {\rm for \ any} \ \eta\in(\eta_0,\infty).
$$
In this case, we modify the definition of the surface $\mathcal{S}$ to be the cylinder $(X,\psi(X),Z)$ constructed exactly as above over the part $\{(X,Y,Z)(\eta):\eta\in(\eta_0,\infty)\}$ of the trajectory $r_0$, and extended with the portion of the plane $\{Y=2/(m-1), 0\leq X\leq X(\eta_0)\}$. The flow of the system \eqref{PSsyst} across the plane $\{Y=2/(m-1)\}$, with normal vector $(0,1,0)$, is given by the sign of the expression
\begin{equation}\label{flow.alter2}
\begin{split}
G(X,Z)&=X\left(1+\frac{2(p-m)}{(m-1)(\sigma+2)}\right)-\frac{2(N-2)}{m-1}-\frac{4m}{(m-1)^2}-Z\\&=\frac{L}{(\sigma+2)(m-1)}(X-X(P_3))-Z,
\end{split}
\end{equation}
where $X(P_3)$ is defined in \eqref{zp2}. Thus, this flow has a unique direction (negative one with respect to the coordinate $Y$) provided $X\leq X(P_3)$.

Since for $\sigma=0$, the trajectory $r_0$ is contained in the plane $\{Y=0\}$, connecting to $P_0$ (which is a saddle point), it follows from the monotonicity of $\eta\mapsto X(\eta)$, the fact that $P_3\neq P_0$ for $\sigma=0$ and standard continuity with respect to $\sigma$ that there is $\sigma_2>0$ such that, for any $\sigma\in(0,\sigma_2)$, the trajectory $r_0$ fulfills either the first alternative above, or the second alternative with $X(\eta_0)\leq X(P_3)$. Thus, for any $\sigma\in(0,\sigma_2)$, the directions of the flow given by \eqref{flow.alter1} and \eqref{flow.alter2} are in force, and thus no trajectory of the system \eqref{PSsyst} can be tangent to the surface $\mathcal{S}$, in any of the two alternatives. Fix from now on
$$
\sigma_0:=\min\{\sigma_1,\sigma_2\}>0,
$$
with $\sigma_1$ introduced in Lemma \ref{lem.pequalm}. The most important geometric property of the surface $\mathcal{S}$ constructed above (in both alternatives) is the fact that it splits the space into two disjoint regions. Thus, for any $\sigma\in(0,\sigma_0)$, let
\begin{equation*}
\begin{split}
&\mathcal{S}_{-}=\left\{(X,Y,Z)\in\real^3: X\geq0, Z\geq0, Y<\min\left\{\psi(X),\frac{2}{m-1}\right\}\right\}, \\
&\mathcal{S}_+=\left\{(X,Y,Z)\in\real^3: X\geq0, Z\geq0, Y>\min\left\{\psi(X),\frac{2}{m-1}\right\}\right\},
\end{split}
\end{equation*}
the regions of the space limited by $\mathcal{S}$. 

We say that a trajectory $(X,Y,Z)(\eta)$ of the system \eqref{PSsyst} \textbf{has an oscillation} with respect to the surface $\mathcal{S}$ if there are $\eta_*<\eta^*\in\real$ such that
$$
(X,Y,Z)(\eta_*)\in\mathcal{S}_{-}, \qquad (X,Y,Z)(\eta^*)\in\mathcal{S}_+,
$$
and there is a unique $\eta\in(\eta_*,\eta^*)$ such that $(X,Y,Z)(\eta)\in\mathcal{S}$.

\medskip

\noindent \textbf{Step 2. No trajectory $l_C$ with $C>0$ stays forever in $\mathcal{S}_+$}. Assume for contradiction that there is $C\in(0,\infty)$ such that $(X,Y,Z)(\eta)\in\mathcal{S}_+$ for any $\eta\in\real$ on the trajectory $l_C$. The third equation of \eqref{PSsyst} gives then that $\eta\mapsto Z(\eta)$ is an increasing function and thus there is $Z_{\infty}=\lim\limits_{\eta\to\infty}Z(\eta)$. Since, by the study in Sections \ref{sec.local} and \ref{sec.infty}, all the critical points (namely, $P_3$, $Q_5$, $Q_2$, $Q_4$ and $Q_{\gamma}$) lying in $\mathcal{S}_+\cup\mathcal{S}$ cannot be the limit as $\eta\to\infty$ of $l_C$ for $C>0$, it then follows that

$\bullet$ either $Z_{\infty}\in(0,\infty)$ and then $l_C$ has an $\omega$-limit contained in the plane $\{Z=Z_{\infty}\}$. In this case, since the $\omega$-limit is itself a trajectory of the system according to \cite[Section 3.2]{Pe}, it follows from the third equation in \eqref{PSsyst} that $Y=-(\sigma+2)/(p-m)$ along it, whence it lies in the region $\mathcal{S}_{-}$ and we reach a contradiction.

$\bullet$ or $Z_{\infty}=\infty$. Since $Y(\eta)$ is bounded from below, we readily infer that $X(\eta)\to\infty$ as $\eta\to\infty$ along the trajectory $l_C$; indeed, if this is not the case, then we easily derive from the first and second equation in \eqref{PSsyst} that both $X(\eta)$ and $Y(\eta)$ would be dominated by $Z(\eta)$ as $\eta\to\infty$ and the limit would be the critical point $Q_4$, contradicting Lemma \ref{lem.Q4}. We thus conclude that the $\omega$-limit of $l_C$ is contained in the invariant plane $\{x=0\}$ of the system \eqref{PSinf1}. It follows then by Lemma \ref{lem.w0} that such limit, if not a single point, would be contained in the critical line $\{x=y=0\}$. We are thus exactly in the same situation as in the final part of Lemma \ref{lem.ret} and we can continue as there to prove that the trajectory $l_C$ under consideration connects to one of the critical points $Q_{\gamma_0}$ or $Q_1$. We conclude this step by recalling that, in the former of these two cases, the trajectory would be fully contained (at least for $\eta\in(\eta_0,\infty)$ for some $\eta_0\in\real$) in the surface $\mathcal{S}$ by its construction, reaching thus a contradiction, while in the latter, for the point $Q_1$ we have $Y=y/x\to-(\sigma+2)/(p-m)$ by Lemma \ref{lem.Q1Q5}, thus the orbit would enter $\mathcal{S}_{-}$ at some point.

\medskip

\noindent \textbf{Step 3. Existence of one profile with the desired behavior}. Since for $\sigma=0$ the existence of a first self-similar profile as in Theorem \ref{th.mult} for any $p\in(m,p_s(0))$ is a well-known result (see for example, \cite[Theorem 4, Section 1.4, Chapter IV]{S4}), for this step we specialize to $\sigma>0$. Fix thus $\sigma\in(0,\sigma_0)$. Taking $p$ (or equivalently $\mu=p-m$) as a parameter in the system \eqref{PSsyst}, we observe that the parameter $\mu$ in a neighborhood of zero does not affect the equilibrium state of $P_0$ and $P_3$, as for the former, it only appears as a parameter in the higher order terms, while for the latter, the eigenvalues of $M(P_3)$ change in a continuous way with respect to $p$ (or $\mu=p-m$) but without changing their character in a neighborhood of $\mu=0$. We infer from \cite[Theorem 5.14]{Shilnikov} that locally, in a neighborhood of the respective critical points, their unstable manifolds vary in a continuous way with respect to $\mu$, which is the same as with respect to $p$ in a neighborhood of $p=m$. Thus, by joining this local continuity of the unstable manifolds in a neighborhood of the critical points, with the standard continuity argument on finite intervals of the independent variable $\eta\in[\eta_{-},\eta_{+}]$ for any $-\infty<\eta_{-}<\eta_{+}<+\infty$, and with the infinite number of oscillations of orbits in the limit case $p=m$ given in Lemma \ref{lem.pequalm}, we deduce that there exists $p_1(\sigma)$ with $m<p_1(\sigma)\leq p_c(\sigma)$ such that for any $p\in(m,p_1(\sigma))$, the unique orbit stemming from $P_3$ on its unstable manifold and orbits $l_C$ for $C\in(0,D_1(p,\sigma))$ for some $D_1(p,\sigma)>0$ (depending on $p$ and $\sigma$) on the unstable manifold of $P_0$ present at least one oscillation (in the sense of the definition at the end of Step 1) with respect to the surface $\mathcal{S}$ constructed in Step 1.

We infer from Step 2 that there is $\eta_{0,C}\in[-\infty,\infty)$ (where the case $\eta_{0,C}=-\infty$ applies if the orbit $l_C$ directly goes out into $\mathcal{S}_{-}$) such that, if $\eta_{0,C}>-\infty$, we have
\begin{equation*}
\begin{split}
&(X,Y,Z)(\eta)\in\mathcal{S}_+, \qquad {\rm for} \ \eta\in(-\infty,\eta_{0,C}), \qquad (X,Y,Z)(\eta_{0,C})\in\mathcal{S},\\
&(X,Y,Z)(\eta)\in\mathcal{S}_-, \qquad {\rm for} \ \eta\in(\eta_{0,C},\eta_{0,C}+\delta), \qquad \delta>0 \ {\rm small}.
\end{split}
\end{equation*}
Fix thus $p\in(m,p_1(\sigma))$ and introduce the following three sets with respect to the orbits $(l_C)_{C\geq0}$ defined in \eqref{manifP0}:
\begin{equation}\label{sets1}
\begin{split}
&\mathcal{A}_1=\left\{C\in(0,\infty): (X,Y,Z)(\eta)\in\mathcal{S}_-, \ {\rm if} \ \eta\in(\eta_{0,C},\infty), \ {\rm and \ there \ is} \right.\\&\left.\eta_{1,C}\in\real, \ Y(\eta_{1,C})<-\frac{\sigma+2}{p-m}\right\},\\
&\mathcal{C}_1=\{C\in(0,\infty): l_C \ {\rm has \ at \ least \ one \ oscillation \ with \ respect \ to} \ \mathcal{S} \ {\rm on} \ (\eta_{0,C},\infty)\},\\
&\mathcal{B}_1=(0,\infty)\setminus(\mathcal{A}_1\cup\mathcal{C}_1).
\end{split}
\end{equation}
According to Lemma \ref{lem.noret}, if $C\in\mathcal{A}_1$, then the orbit $l_C$ enters the stable node $Q_3$. The definition of $\mathcal{A}_1$ entails the openness of the set $\mathcal{A}_1$, while the fact that the orbit $l_{\infty}$ enters $Q_3$ with $Y(\eta)<0$ for any $\eta\in\real$ (as shown in Lemma \ref{lem.X0}) and the continuity with respect to $C$ on the unstable manifold of $P_0$ give the non-emptiness of $\mathcal{A}_1$, which contains an interval of the form $(C^*,\infty)$. With respect to $\mathcal{C}_1$, this set is open by the definition of an oscillation, while its non-emptiness is granted by the definition of $p_1(\sigma)$ (and it contains an interval of the form $(0,C_*)$). A standard argument of topology gives then that $\mathcal{B}_1\neq\emptyset$. Take then $C_1:=\sup\mathcal{C}_1<\infty$, and notice that $C_1\in\mathcal{B}_1$. Then, on the orbit $l_{C_1}$, we have on the one hand that $-(\sigma+2)/(p-m)<Y(\eta)$ for any $\eta\in(\eta_{0,C_1},\infty)$ (since $l_{C_1}$ cannot be tangent to the plane $\{Y=-(\sigma+2)/(p-m)\}$), and on the other hand,
$$
(X,Y,Z)(\eta)\in\mathcal{S}_{-}, \qquad {\rm for \ any} \ \eta\in(\eta_{0,C_1},\infty),
$$
since by Step 1, this limiting orbit $l_{C_1}$ cannot be tangent to $\mathcal{S}$. In particular, by the construction of $\mathcal{S}$, the condition \eqref{interm22} is fulfilled for any $\eta\in(\eta_{0,C_1},\infty)$. Lemma \ref{lem.ret} then entails that $l_{C_1}$ either enters $Q_1$ or $Q_{\gamma_0}$. But the latter is discarded, since there is a unique orbit entering $Q_{\gamma_0}$, which is fully contained in $\mathcal{S}$, so that $l_{C_1}$ connects to $Q_1$, as desired.

We plot in Figure \ref{fig1} several orbits presenting either none or one oscillation, together with the profiles $f(\xi)$ corresponding to them, as an outcome of numerical experiments that allow us to visually explain the proof of the existence step.

\begin{figure}[ht!]
  % Requires \usepackage{graphicx}
  \begin{center}
  \subfigure[Various trajectories in the sets $\mathcal{A}_1$ and $\mathcal{C}_1$]{\includegraphics[width=7.5cm,height=6cm]{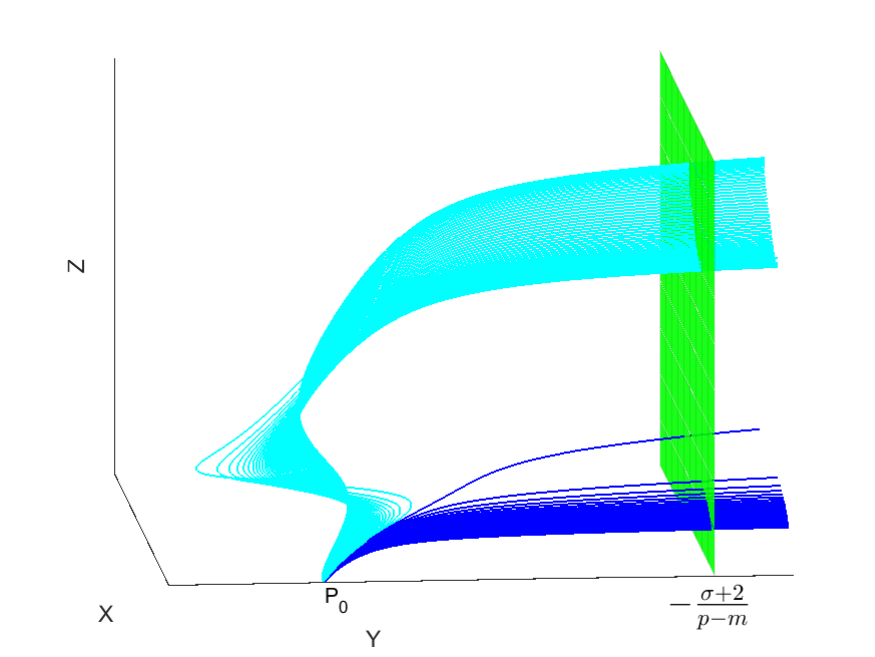}}
  \subfigure[Profiles corresponding to the previous trajectories]{\includegraphics[width=7.5cm,height=6cm]{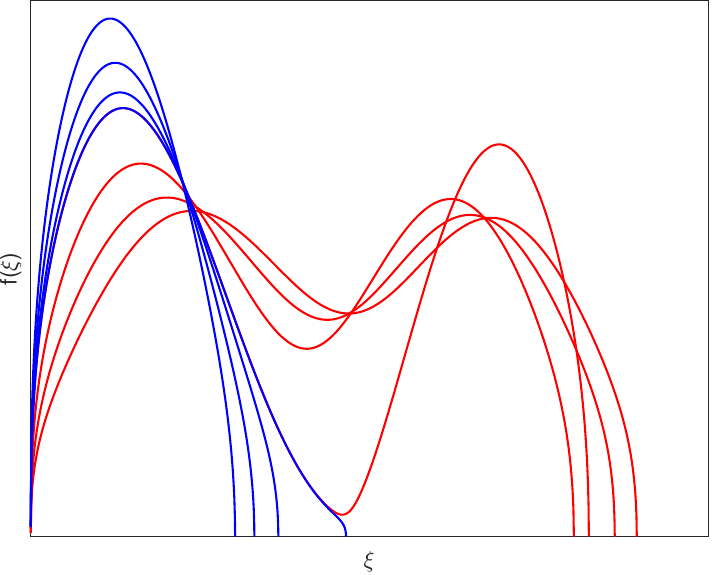}}
  \end{center}
  \caption{Various trajectories presenting none or one oscillation, seen in the phase space and in profiles. Experiments for $m=2$, $N=5$, $p=2.1$ and $\sigma=0.5$.}\label{fig1}
\end{figure}

\medskip

\noindent \textbf{Step 4. Profiles corresponding to orbits with one oscillation}. Fix now $\sigma\in[0,\sigma_0)$. By applying the same continuity argument as in Step 3 but eventually increasing the finite endpoint $\eta_+<\infty$ up to which the argument of continuity is considered, we obtain that there is $p_2(\sigma)\in(m,p_1(\sigma))$ such that, for any $p\in(m,p_2(\sigma))$, there exists $D_2(p,\sigma)>0$ such that all the orbits $l_C$ as in \eqref{manifP0} with $0<C<D_2(p,\sigma)$ and the unique orbit contained in the unstable manifold of $P_3$ present at least \textbf{two oscillations} with respect to the surface $\mathcal{S}$: that is, there are $-\infty<\eta_{*,1}<\eta^{*}_1<\eta_{*,2}<\eta^{*}_2<\infty$ such that
\begin{equation*}
(X,Y,Z)(\eta_{*,i})\in\mathcal{S}_+, \qquad (X,Y,Z)(\eta^{*}_i)\in\mathcal{S}_{-}, \qquad i=1,2,
\end{equation*}
and there is a unique intersection between the trajectory and $\mathcal{S}$ for $\eta\in(\eta_{*,1},\eta^{*}_1)$, for $\eta\in(\eta^{*}_1,\eta_{*,2})$ and also for $\eta\in(\eta_{*,2},\eta^{*}_2)$. We next split the open set $\mathcal{C}_1$ introduced in \eqref{sets1} into the following three subsets:
\begin{equation}\label{sets2}
\begin{split}
&\mathcal{A}_2=\left\{C\in\mathcal{C}_1: l_C \ {\rm has \ exactly \ one \ oscillation \ and \ there \ is} \right.\\&\left. \eta_{2,C}\in\real, \ Y(\eta_{2,C})<-\frac{\sigma+2}{p-m}\right\},\\
&\mathcal{C}_2=\{C\in\mathcal{C}_1: l_C \ {\rm has \ at \ least \ two \ oscillations \ with \ respect \ to} \ \mathcal{S}\},\\
&\mathcal{B}_2=\mathcal{C}_1\setminus(\mathcal{A}_2\cup\mathcal{C}_2).
\end{split}
\end{equation}
Once more, we infer from Lemma \ref{lem.noret} and the stability of the node $Q_3$ that $\mathcal{A}_2$ is an open set, while $\mathcal{C}_2$ is an open set by definition, which is also non-empty by the choice of $p\in(m,p_2(\sigma))$, in particular $\mathcal{C}_2$ contains an interval of the form $(0,D_2(p,\sigma))$ as explained above. The only new difficulty comes from the non-emptiness of the set $\mathcal{A}_2$. According to Step 3, letting $C_1=\sup\mathcal{C}_1$, then the trajectory $l_{C_1}$ connects $P_0$ to $Q_1$ without any oscillation with respect to the surface $\mathcal{S}$. Applying the generalized boundary value theorem \cite[Theorem 5.11]{Shilnikov} in a sufficiently small neighborhood $\mathcal{U}$ of the saddle point $Q_1$, we infer that there exists $\overline{C}\in(0,C_1)$ such that, for any $C\in(\overline{C},C_1)$ the orbit $l_{C}$ enters the neighborhood $\mathcal{U}$ of $Q_1$ and then follows the trajectory of orbits stemming from $Q_1$ on its unstable manifold. Since $C_1=\sup\mathcal{C}_1$, we can pick $C'\in(\overline{C},C_1)\cap\mathcal{C}_1$. The orbit $l_{C'}$ then will follow the trajectories given by Lemma \ref{lem.w0} presenting a single oscillation with respect to the surface $\mathcal{S}$ (by definition of $\mathcal{C}_1$) in a very small strip very close to the plane $\{w=0\}$. Hence $C'\in\mathcal{A}_2$ and thus $\mathcal{A}_2$ is non-empty and open. Since $\mathcal{C}_1$ is open, it then follows that $\mathcal{B}_2$ is non-empty. Completely similar considerations as in the end of Step 3 and based on Lemma \ref{lem.ret} prove that $l_C$ connects to the critical point $Q_1$ after a unique oscillation with respect to the surface $\mathcal{S}$, for $C\in\mathcal{B}_2$, as claimed (and in particular for $C_2=\sup\mathcal{C}_2$). This step also applies for $\sigma=0$.

\medskip

\noindent \textbf{Step 5. Profiles corresponding to orbits with $n$ oscillations, $n\geq2$}. This step extends, by complete induction, the analysis performed in Step 4, so that we will be rather brief. Assume that there exists $p_n(\sigma)$ such that, for any $p\in(m,p_n(\sigma))$, there exist at least an orbit connecting $P_0$ to $Q_1$ and having exactly $n-1$ oscillations with respect to the surface $\mathcal{S}$, and we have defined sets $\mathcal{A}_n$, $\mathcal{C}_n$ and $\mathcal{B}_n$ similarly as in \eqref{sets2}, by assuming that elements in $\mathcal{A}_n$ have exactly $n-1$ oscillations and elements in $\mathcal{C}_n$ have at least $n$ oscillations. By extending eventually the finite endpoint $\eta_+<\infty$ of the interval on which the continuity argument with respect to the limiting case $p=m$ (according to Lemma \ref{lem.pequalm} and \cite[Theorem 5.14]{Shilnikov}) is applied, we deduce the existence of $p_{n+1}(\sigma)\in(m,p_n(\sigma))$ and of a constant $D_{n+1}(p,\sigma)>0$ such that, for any $p\in(m,p_{n+1}(\sigma))$, all the orbits $l_C$ defined in \eqref{manifP0} with $C\in(0,D_{n+1}(p,\sigma))$ and the unique orbit on the unstable manifold of $P_3$ present at least $n+1$ oscillations with respect to $\mathcal{S}$. We then split the open set $\mathcal{C}_n$ (defined in the inductive step, non-empty and open) into the following three sets:
\begin{equation}\label{setsn}
\begin{split}
&\mathcal{A}_{n+1}=\left\{C\in\mathcal{C}_n: l_C \ {\rm has \ exactly} \ n \ {\rm oscillations \ and \ there \ is} \right.\\&\left. \eta_{n+1,C}\in\real, \ Y(\eta_{n+1,C})<-\frac{\sigma+2}{p-m}\right\},\\
&\mathcal{C}_{n+1}=\{C\in\mathcal{C}_n: l_C \ {\rm has \ at \ least} \ n+1 \ {\rm oscillations \ with \ respect \ to} \ \mathcal{S}\},\\
&\mathcal{B}_{n+1}=\mathcal{C}_n\setminus(\mathcal{A}_{n+1}\cup\mathcal{C}_{n+1}).
\end{split}
\end{equation}
Arguing as in Step 3, we infer that $\mathcal{C}_{n+1}$ is open and non-empty (contains at least the interval $(0,D_{n+1}(p,\sigma))$, by construction), while Lemmas \ref{lem.w0} and \ref{lem.noret} and the boundary value problem \cite[Theorem 5.11]{Shilnikov} in a neighborhood of $Q_1$ ensure that $\mathcal{A}_{n+1}$ is also non-empty and open. Since $\mathcal{C}_n$ was open, we conclude by similar arguments as in the end of Step 3 that there are elements $C\in\mathcal{B}_{n+1}$ and that the corresponding trajectories $l_C$ connect $P_0$ to $Q_1$ after exactly $n$ oscillations with respect to the surface $\mathcal{S}$. This proves the first part of the statement of Theorem \ref{th.mult}.

We plot in Figure \ref{fig2} a number of trajectories presenting several different numbers of oscillations obtained through a numerical experiment based on a shooting of the orbits $l_C$ for a big amount of values of the parameter $C>0$. In the second figure, we also plot several profiles corresponding to (a part of) these trajectories.

\begin{figure}[ht!]
  % Requires \usepackage{graphicx}
  \begin{center}
  \subfigure[Trajectories $l_C$ with different numbers of oscillations]{\includegraphics[width=7.5cm,height=6cm]{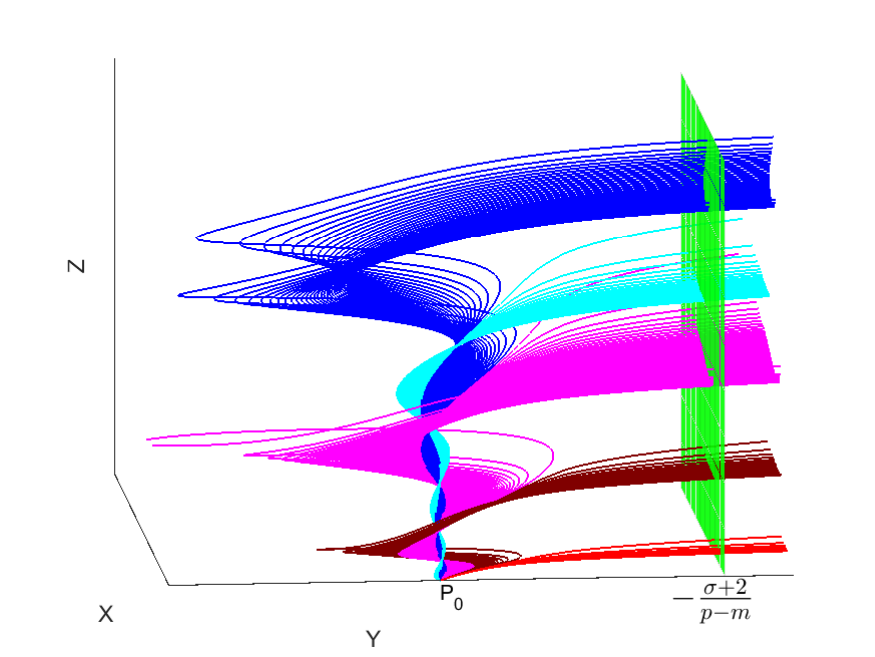}}
  \subfigure[Profiles corresponding to some of the previous trajectories]{\includegraphics[width=7.5cm,height=6cm]{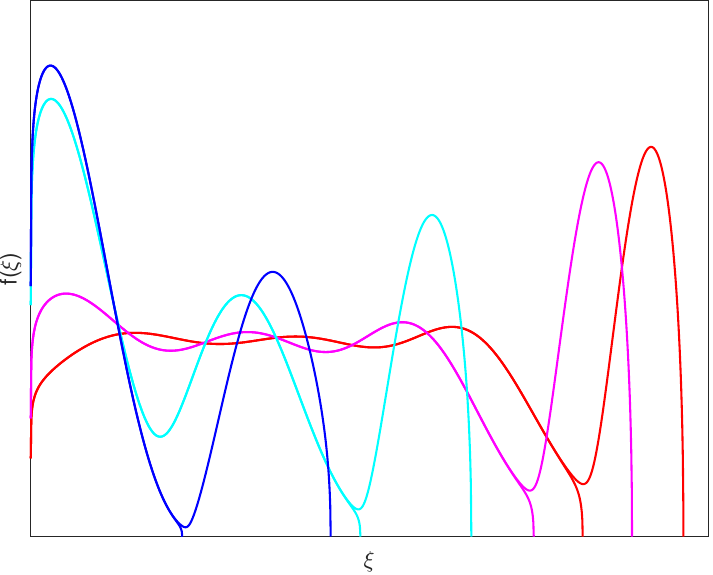}}
  \end{center}
  \caption{A shooting of trajectories $l_C$ presenting different numbers of oscillations. Experiments for $m=2$, $N=5$, $p=2.1$ and $\sigma=0.1$.}\label{fig2}
\end{figure}

\medskip

\noindent \textbf{Step 6. Profiles with dead-core and $n$ oscillations}. We show very briefly in this step that a similar construction as in Steps 3-5 can be performed starting from $Q_5$ instead of $P_0$. We then work in the variables $(x,y,z)$ of the system \eqref{PSinf1} and setting $w=xz$. On the one hand, Lemma \ref{lem.w0} proves that the orbit starting from $Q_5$ contained in the plane $\{w=0\}$ connects directly, with a decreasing trajectory in the $y$ variable, to the stable node $Q_3$. We further infer by Lemma \ref{lem.Z0} that the orbit starting from $Q_5$ contained in the plane $\{z=0\}$ (which is the same as $\{w=0\}$) connects to the saddle $P_3$ on its two-dimensional stable manifold fully contained in the plane $\{z=0\}$. An application of \cite[Theorem 2.9]{Shilnikov} in a neighborhood of the saddle point $P_3$ shows that the trajectories on the two-dimensional unstable manifold of $Q_5$ and "close" to the trajectory contained in $\{z=0\}$ (in a similar sense as the orbits $l_C$ with $C\in(0,D_{n}(p,\sigma))$ for the unstable manifold of $P_0$) will present as many oscillations with respect to the surface $\mathcal{S}$ as the unique trajectory on the unstable manifold of $P_3$. We are thus for the two-dimensional unstable manifold of $Q_5$ in exactly the same position as we were in Steps 3-5 for the two-dimensional unstable manifold of $P_0$. We can thus perform a completely similar induction argument as in Steps 3-5 above with respect to the number of oscillations with respect to the surface $\mathcal{S}$ and then recall that the local behavior of profiles contained in the orbits stemming from $Q_5$ is a dead-core according to Lemma \ref{lem.Q1Q5}, to complete the proof of the second part of the statement of Theorem \ref{th.mult}. 

\medskip

\noindent \textbf{Step 7. Quantitative improvement for $\sigma=0$}. In the case $\sigma=0$, the surface $\mathcal{S}$ coincides with the plane $\{Y=0\}$ and the orbit $r_0$ entering $Q_{\gamma_0}$ is represented by the line $\{X=Z\}$ inside the plane $\{Y=0\}$ in the system \eqref{PSsyst}, which gives the constant profile
$$
f(\xi)=\left(\frac{1}{p-1}\right)^{1/(p-1)}.
$$
The linearization performed in a neighborhood of this constant solution in \cite[Section 1.4, Chapter IV, pp. 191-193]{S4} in any space dimension $N\geq1$ shows that there are solutions to \eqref{SSODE} corresponding to orbits on the unstable manifold of $P_0$ in our notation, presenting exactly $k$ different maxima and minima, provided $p_{k+1}(0)\leq p<p_k(0)$, where $p_k(0)=(mk-1)/(k-1)$. Since maxima and minima on the profile correspond to oscillations with respect to the plane $\{Y=0\}$ in our setting, taking into account that the orbit from $P_0$ contained in the plane $\{X=0\}$ enters $Q_3$ with a decreasing trajectory for any $p\in(m,p_s(0))$, as established in Lemma \ref{lem.X0}, we can repeat Steps 3-5 above with these precise numbers $p_k(0)$. In particular, let us notice that
$$
p_k(0)<p_s(0) \qquad {\rm iff} \qquad k>K(m,N)=\frac{mN-N+2m+2}{4m},
$$
which shows that for any $p\in(m,p_s(0))$, we have at least $k$ different profiles, where $k$ is the closest integer inferior to the number $K(m,N)$ defined in \eqref{KMN}. A completely similar argument has been employed (for larger values of $p$) in the recent paper \cite{IS24}, where full details are given for $\sigma=0$.

\medskip

\noindent \textbf{Step 8. Profiles with local behavior \eqref{beh.P3} as $\xi\to0$}. According to Lemma \ref{lem.P3}, we are looking in this step for connections from $P_3$ to $Q_1$. Similarly as in the Steps 3-5 above, at every change (as $p$ moves decreasingly towards $m$) of the number of oscillations with respect to the surface $\mathcal{S}$ of the trajectory of the unique orbit contained in the unstable manifold of $P_3$, there is one value of $p$ for which this orbit connects $P_3$ to $Q_1$. Since we do not have an estimate of the number of oscillations of this orbit when $p\sim p_s(\sigma)$, but we know that it tends to infinity as $p\to m$, $p>m$, by the continuity with respect to the limiting case $p=m$ as explained in Step 3 above, there is at least one value of $p$ at which the number of oscillations changes and the argument in Step 3 can be applied to get an orbit connecting $P_3$ to $Q_1$.
\end{proof}

\noindent \textbf{Remark.} The fact that the proof is performed for $m<p\leq p_c(\sigma)$ does not mean necessarily that $p_1(\sigma)<p_c(\sigma)$, as the range of existence of the first solution (as deduced in Step 2 of the proof) might be larger. Actually, except for $\sigma=0$, we have no clue related to a precise estimate of the numbers $p_k(\sigma)$ for a given $\sigma\in(0,\sigma_0)$.

\section{Non-existence. Proof of Theorem \ref{th.nonexist}}\label{sec.nonexist}

This section is devoted to the proof of the non-existence result for $\sigma>0$ sufficiently large as stated in Theorem \ref{th.nonexist}. One of the most interesting aspects of this proof is that it involves a mix of two very different techniques: on the one hand, for $m<p\leq p_F(\sigma)$, with $p_F(\sigma)$ the Fujita-type exponent defined in \eqref{Fujita}, we construct a Pohozaev functional and employ it to prove non-existence. On the other hand, for $p_F(\sigma)<p<p_s(\sigma)$, we employ an argument of geometric barriers in the phase space associated to the system \eqref{PSsyst} in form of a combination of planes and surfaces blocking the access of the orbits to the critical point $Q_1$, provided $\sigma$ is sufficiently large. We divide this section into three subsections according to this scheme.

\subsection{A Pohozaev identity}\label{subsec.Poh}

Despite the fact that we are working with radially symmetric solutions in the rest of this work, we will work throughout this section with general solutions, the outcome being a more general non-existence result (including non-radial self-similar solutions) in the range where the Pohozaev identity deduced below applies. We proceed as in \cite{FT00, IL22} and in a first step, pass to self-similar variables by setting
\begin{equation}\label{selfsim.var}
u(x,t)=(T-t)^{-\alpha}v(x(T-t)^{-\beta}), \qquad y=x(T-t)^{-\beta},
\end{equation}
and plug this ansatz into Eq. \eqref{eq1} to deduce after straightforward calculations that $v(y)$ solves the equation
\begin{equation}\label{eq.sim.var}
-\Delta V+\alpha V^{1/m}+\beta y\cdot\nabla V^{1/m}-|y|^{\sigma}V^{p/m}=0, \qquad V(y)=v^m(y).
\end{equation}
Assume now that we are dealing with solutions $v(y)$ satisfying the following decay estimates as $|y|\to\infty$:
\begin{equation}\label{decays1}
v(y)\leq C_1|y|^{-(\sigma+2)/(p-m)}, \qquad |\nabla v(y)|\leq C_2|y|^{-(\sigma+2)/(p-m)-1},
\end{equation}
for some $C_1$, $C_2>0$. We then infer from \eqref{decays1} and \eqref{eq.sim.var} that
\begin{equation}\label{decays2}
\begin{split}
V(y)\leq C_1^m|y|^{-m(\sigma+2)/(p-m)}, \qquad &|\nabla V(y)|\leq C_3|y|^{-m(\sigma+2)/(p-m)-1},\\ &\Delta V(y)\leq C_0|y|^{-(\sigma+2)/(p-m)},
\end{split}
\end{equation}
as $|y|\to\infty$, for some positive constants $C_0$ and $C_3$. The rest of the construction follows several steps, as indicated below.

\medskip

\noindent \textbf{Step 1. Multiply \eqref{eq.sim.var} by $y\cdot\nabla V$}. We compute term by term the outcome, integrating by parts when needed (and we justify at the end of the step that the integrals obtained are convergent). Multiplying the first term in \eqref{eq.sim.var} by $y\cdot\nabla V$ and integrating, we find
\begin{equation}\label{Poh1}
\begin{split}
-\int_{\real^N}\left(y\cdot\nabla V \right)\Delta V\,dy&=\int_{\real^N}\nabla V\cdot\nabla\left(y\cdot\nabla V\right)\,dy=\int_{\real^N}\sum\limits_{i,j=1}^{N}\partial_iV\partial_i\left(x_j\partial_jV \right)\,dy\\
&=\int_{\real^N}\sum\limits_{i,j=1}^{N}\left[y_j\partial_i V\partial_{i}\partial_{j}V+\delta_{ij}\partial_i V \partial_j V \right]\,dy\\
&=\frac{1}{2}\int_{\real^N}y\cdot\nabla(|\nabla V|^2)\,dy+\int_{\real^N}|\nabla V|^2\,dy\\
&=-\frac{N-2}{2}\int_{\real^N}|\nabla V|^2\,dx.
\end{split}
\end{equation}
The second term obtained after this multiplication and integration is the following:
\begin{equation}\label{Poh2}
\alpha\int_{\real^N}V^{1/m}y\cdot\nabla V\,dy=\frac{m\alpha}{m+1}\int_{\real^N}y\cdot\nabla V^{(m+1)/m}=-\frac{mN\alpha}{m+1}\int_{\real^N}V^{(m+1)/m}\,dy.
\end{equation}
The third term gives
\begin{equation}\label{Poh3}
\beta\int_{\real^N}(y\cdot\nabla V^{1/m})(y\cdot\nabla V)\,dy=\frac{\beta}{m}\int_{\real^N}V^{(1-m)/m}(y\cdot\nabla V)^2\,dy
\end{equation}
Finally, the fourth term in \eqref{eq.sim.var}, multiplied by $y\cdot\nabla V$, leads to
\begin{equation}\label{Poh4}
\begin{split}
-\int_{\real^N}|y|^{\sigma}V^{p/m}(y\cdot\nabla V)\,dy&=-\frac{m}{m+p}\int_{\real^N}|y|^{\sigma}y\cdot\nabla V^{(m+p)/m}\,dy\\
&=\frac{m(N+\sigma)}{m+p}\int_{\real^N}|y|^{\sigma}V^{(m+p)/m}\,dy.
\end{split}
\end{equation}
Gathering the identities \eqref{Poh1}, \eqref{Poh2}, \eqref{Poh3} and \eqref{Poh4}, we derive from \eqref{eq.sim.var} the following equality:
\begin{equation}\label{Poh5}
\begin{split}
0&=-\frac{N-2}{2}\int_{\real^N}|\nabla V(y)|^2\,dy-\frac{mN\alpha}{m+1}\int_{\real^N}V^{(m+1)/m}(y)\,dy\\
&+\frac{\beta}{m}\int_{\real^N}V^{(1-m)/m}(y)(y\cdot\nabla V(y))^2\,dy+\frac{m(N+\sigma)}{m+p}\int_{\real^N}|y|^{\sigma}V^{(m+p)/m}(y)\,dy.
\end{split}
\end{equation}
Let us next observe that all the previous integrals are convergent at infinity, as implied by \eqref{decays2}, and thus all the previous calculations are allowed. Indeed, we have
$$
|(y\cdot\nabla V(y))\Delta V(y)|\leq C_4|y|^{-(m+1)(\sigma+2)/(p-m)}, \qquad |\nabla V(y)|^2\leq C_5|y|^{-2m(\sigma+2)/(p-m)-2},
$$
then
\begin{equation*}
\begin{split}
&V^{(m+1)/m}(y)\leq C_6|y|^{-(m+1)(\sigma+2)/(p-m)}, \\ &|V^{(1-m)/m}(y)(y\cdot\nabla V(y))^2|\leq C_8|y|^{-(m+1)(\sigma+2)/(p-m)},
\end{split}
\end{equation*}
and finally
$$
|y|^{\sigma}V^{(m+p)/m}(y)\leq C_9|y|^{-(m+p)(\sigma+2)/(p-m)+\sigma},
$$
for constants $C_i>0$. And, since we are interested on the interval $m<p<p_F(\sigma)$, it follows by direct calculation that
$$
\frac{(m+1)(\sigma+2)}{p-m}>N, \qquad \frac{(m+p)(\sigma+2)}{p-m}-\sigma>N, \qquad \frac{2m(\sigma+2)}{p-m}+2>N,
$$
the last two inequalities holding in fact true for any $p\in(m,p_s(\sigma))$.

\medskip

\noindent \textbf{Step 2. Multiply \eqref{eq.sim.var} by $V$}. After a multiplication by $V$ of the equation \eqref{eq.sim.var} and an integration over $\real^N$ (employing integration by parts when needed), one gets the following identity
\begin{equation}\label{Poh6}
\begin{split}
0&=\int_{\real^N}|\nabla V(y)|^2\,dy+\alpha\int_{\real^N}V^{(m+1)/m}(y)\,dy\\
&-\frac{N\beta}{m+1}\int_{\real^N}V^{(m+1)/m}(y)\,dy-\int_{\real^N}|y|^{\sigma}V^{(m+p)/m}(y)\,dy.
\end{split}
\end{equation}
Once more, these integrals are convergent at infinity, as it follows from \eqref{decays2}.

\medskip

\noindent \textbf{Step 3. Gathering the identities \eqref{Poh5} and \eqref{Poh6}}. We next combine the two identities \eqref{Poh5} and \eqref{Poh6} with the goal of reducing the terms in $|y|^{\sigma}V^{(m+p)/m}(y)$ between them. To this end, we multiply \eqref{Poh6} by $m(N+\sigma)/(m+p)$ and sum the outcome to \eqref{Poh5}. We obtain
\begin{equation}\label{Pohozaev}
\begin{split}
0&=-\frac{N-2}{2}\int_{\real^N}|\nabla V(y)|^2\,dy+\frac{m(N+\sigma)}{m+p}\int_{\real^N}|\nabla V(y)|^2\,dy\\
&-\frac{\alpha mN}{m+1}\int_{\real^N}V^{(m+1)/m}(y)\,dy+\frac{\alpha m(N+\sigma)}{m+p}\int_{\real^N}V^{(m+1)/m}(y)\,dy\\
&-\frac{N\beta m(N+\sigma)}{(m+p)(m+1)}\int_{\real^N}V^{(m+1)/m}(y)\,dy+\frac{\beta}{m}\int_{\real^N}V^{(1-m)/m}(y)(y\cdot\nabla V(y))^2\,dy\\
&=\frac{m(N+2\sigma+2)-p(N-2)}{2(m+p)}\int_{\real^N}|\nabla V(y)|^2\,dy+\frac{\beta}{m}\int_{\real^N}V^{(1-m)/m}(y)(y\cdot\nabla V(y))^2\,dy\\
&+\frac{mQ(m,N,p,\sigma)}{(m+1)(m+p)L}\int_{\real^N}V^{(m+1)/m}(y)\,dy,
\end{split}
\end{equation}
where $L=\sigma(m-1)+2(p-1)>0$ and
\begin{equation}\label{interm18}
Q(m,N,p,\sigma)=(m+1)\sigma^2+(m+1)(N+2)\sigma+N(mN+2)-N(N+2\sigma+2)p.
\end{equation}
The identity \eqref{Pohozaev} will be employed at the end of this section in order to establish non-existence of self-similar solutions for $m<p\leq p_F(\sigma)$.

\medskip

\noindent \textbf{Remark.} Coming back to our self-similar solutions $f(\xi)$, with $\xi=|y|$ in the previous notation, that enter the critical point $Q_1$, they fulfill the decay rates \eqref{decays1}, as follows from \eqref{beh.Q1} and \eqref{beh.Q1.deriv}. Thus, the identity \eqref{Pohozaev} applies in particular to them.

\subsection{A system of planes and surfaces}\label{subsec.geom}

Let us consider now a geometric construction in the phase space associated to the system \eqref{PSsyst} combining several planes and surfaces and generating an invariant region for $p_F(\sigma)<p<p_s(\sigma)$. This system has been already introduced in \cite[Section 6]{IS23b}, thus we shall skip some calculations in the forthcoming steps.

\medskip

\noindent \textbf{1. A plane in the half-space $\{Y>0\}$}. We begin with the plane given by
\begin{equation}\label{plane1}
Z={\rm pln}(X,Y):=(N+\sigma)\left(\frac{X}{N}-Y\right), \qquad Y>0.
\end{equation}
Taking the normal direction as
$$
\overline{n}=\frac{1}{N}(-(N+\sigma),N(N+\sigma),N),
$$
the direction of the flow of the system \eqref{PSsyst} across the plane \eqref{plane1} is given by the sign of
\begin{equation}\label{flow.plane}
F_1(X,Y)=-p(N+\sigma)Y\left[Y-\frac{(p-m)(N+\sigma)+L}{Np(\sigma+2)}X\right].
\end{equation}
Since $Z\geq0$, we have $Y\leq X/N$ on the plane \eqref{plane1}, hence
$$
Y-\frac{(p-m)(N+\sigma)+L}{Np(\sigma+2)}X\leq\frac{X}{N}\left[1-\frac{(p-m)(N+\sigma)+L}{p(\sigma+2)}\right]=\frac{p_F(\sigma)-p}{p(\sigma+2)}X<0,
$$
which proves that $F_1(X,Y)>0$ if $p>p_F(\sigma)$ and $Y>0$.

\medskip

\noindent \textbf{2. A surface in the half-space $\{Y\leq0\}$}. Set
\begin{equation}\label{surface}
Z={\rm sup}(X,Y):=(N+\sigma)\left(\frac{X}{N}-Y\right)-pY^2+\frac{(p-m)(N+\sigma)}{N(\sigma+2)}XY,
\end{equation}
for $Y\leq0$. Taking
$$
\overline{N}=\left(-\frac{(N+\sigma)(\sigma+2+(p-m)Y)}{N(\sigma+2)},-\frac{(p-m)(N+\sigma)}{N(\sigma+2)}X+2pY+N+\sigma,1\right),
$$
as normal vector, we obtain that the direction of the flow of the system \eqref{PSsyst} across the surface \eqref{surface} is given by the sign of the expression
\begin{equation}\label{flow.surf}
\begin{split}
F_2(X,Y)&=\left(Y+\frac{\sigma+2}{p-m}\right)\left[\frac{\sigma(p-m)^2(N+\sigma)}{N^2(\sigma+2)^2}X^2+\frac{(p-m)C(m,N,p,\sigma)}{N(\sigma+2)}XY\right.\\
&\left.+(p-m)pY^2\right],
\end{split}
\end{equation}
with
$$
C(m,N,p,\sigma)=(N+\sigma)(m-1)-2p\sigma.
$$
Observe that all the terms in the right hand side of \eqref{flow.surf} are positive in the region $\{-(\sigma+2)/(p-m)<Y<0\}$, provided $C(m,N,p,\sigma)<0$, that is,
\begin{equation}\label{limitsurf}
p>\frac{(N+\sigma)(m-1)}{2\sigma}, \qquad {\rm or \ equivalently}, \qquad \sigma>\frac{N(m-1)}{2p-m+1}.
\end{equation}
Thus, if \eqref{limitsurf} holds true, $F_2(X,Y)>0$ in $\{-(\sigma+2)/(p-m)<Y<0\}$.

\medskip

\noindent \textbf{3. The plane $\{Y=-(\sigma+2)/(p-m)\}$}. According to Lemma \ref{lem.noret}, if either $N\in\{1,2\}$ or $N\geq3$ and $p\in(m,p_c(\sigma)]$, any trajectory lying on the unstable manifold of $P_0$, once crossing this plane, will not return and will enter the critical point (stable node) $Q_3$.

\medskip

\noindent \textbf{4. The cylinder \eqref{iso1}}. According to Lemma \ref{lem.noret}, if $N\geq3$ and $p\in(p_c(\sigma),p_s(\sigma))$, any trajectory lying on the unstable manifold of $P_0$ will remain in the exterior region to the cylinder \eqref{iso1} and, once crossing the plane $\{Y=-(\sigma+2)/(p-m)\}$, will not return and will again enter the stable node $Q_3$.

\medskip

This construction will be employed next in order to prove non-existence of solutions provided $p_F(\sigma)<p<p_s(\sigma)$.

\subsection{Proof of Theorem \ref{th.nonexist}}\label{subsec.non}

This part is devoted to the proof of Theorem \ref{th.nonexist}. It will be done in a few steps, according to the range of $p$.

\medskip

\noindent \textbf{Step 1: Exploiting the Pohozaev identity.} In this step, we employ the Pohozaev identity \eqref{Pohozaev} constructed in Section \ref{subsec.Poh}. Notice that since either $N\in\{1,2\}$ or $N\geq3$ and $m<p<p_s(\sigma)$, the coefficients of the first two integral terms in \eqref{Pohozaev} are positive. Since all the integrands are also positive, it suffices in \eqref{interm18} to have
$$
Q(m,N,p,\sigma)\geq 0,
$$
in order to ensure that there are no nontrivial solutions to Eq. \eqref{eq.sim.var} and thus no self-similar solutions to Eq. \eqref{eq1}. This condition is equivalent to
\begin{equation}\label{nonex.1}
m\leq p\leq p_1(\sigma):=m+\frac{(\sigma+2)[\sigma(m+1)-N(m-1)]}{N(N+2\sigma+2)},
\end{equation}
and we notice that the right hand side in \eqref{nonex.1} is bigger than $m$ provided $\sigma>N(m-1)/(m+1)$ and satisfies
$$
m+\frac{(\sigma+2)[\sigma(m+1)-N(m-1)]}{N(N+2\sigma+2)}-p_F(\sigma)=\frac{(\sigma+2)[\sigma(m-1)-mN-2]}{N(N+2\sigma+2)},
$$
which is non-negative if $\sigma\geq\sigma^*=(mN+2)/(m-1)$, and this is how the exponent $\sigma^*$ in \eqref{sigmastar} comes into play. We thus conclude that, if
\begin{equation}\label{cond1}
\sigma\geq\sigma^*, \qquad m<p\leq p_F(\sigma),
\end{equation}
there are no non-trivial self-similar solutions (not even in non-radial form) to Eq. \eqref{eq1}. Moreover, let us notice that
$$
\frac{(m+1)(\sigma+2)}{p_1(\sigma)-m}-N=\frac{N[2mN+2m+2+\sigma(m+1)]}{\sigma(m+1)-N(m-1)}>0,
$$
provided $\sigma>\sigma^*>N(m-1)/(m+1)$ (as follows from \eqref{interm23} below), which implies that the integrals leading to the proof of the identity \eqref{Pohozaev} are also convergent at infinity for $p\in(m,p_1(\sigma))$ and thus the identity holds true.

\medskip

\noindent \textbf{Step 2: $p_F(\sigma)<p<p_s(\sigma)$}. In this part, we employ the system of planes and surfaces constructed in Section \ref{subsec.geom}. Indeed, the Taylor expansion near the origin of the unstable manifold stemming from $P_0$, according to the theory in \cite[Section 2.7]{Shilnikov}, has the form
\begin{equation}\label{unstable}
Z=(N+\sigma)\left(\frac{X}{N}-Y\right)+aX^2+bXY+cY^2+o(|(X,Y)|^2),
\end{equation}
with coefficients
\begin{equation}\label{intermXX}
\begin{split}
&a=-\frac{\sigma(N+\sigma)A(m,N,p,\sigma)}{N^2(\sigma+2)(N+2)(N+\sigma+2)(N+2\sigma+2)}, \\
&b=-\frac{(N+\sigma)A(m,N,p,\sigma)}{N(\sigma+2)(N+\sigma+2)(N+2\sigma+2)}, \qquad c=-\frac{(N+\sigma)p}{N+2\sigma+2},
\end{split}
\end{equation}
where
\begin{equation*}
\begin{split}
A(m,N,p,\sigma)&=-(N^2+3N\sigma+4N+2\sigma+4)(p-p_F(\sigma))\\&+\frac{(\sigma+2)(N+2)(N+2\sigma+2)}{N}.
\end{split}
\end{equation*}
Noticing that, with the coefficients defined in \eqref{intermXX}, we have on the plane $Y=X/N$ that
$$
aX^2+bXY+cY^2\Big|_{Y=X/N}=\frac{(N+\sigma)(p-p_F(\sigma))}{N(N+2)(\sigma+2)}X^2>0,
$$
for $p>p_F(\sigma)$, we infer that the orbits stemming from $P_0$ for $\sigma>0$ enter the region $\mathcal{F}=\{(X,Y,Z): Y>0, Z>{\rm pln}(X,Y)\}$ and according to the direction of the flow \eqref{flow.plane} of the system \eqref{PSsyst} across the plane \eqref{plane1}, they stay in the region $\mathcal{F}$ until intersecting the plane $\{Y=0\}$. For more details of the previous calculations, the reader is referred to \cite[Section 6]{IS23b}. When these orbits intersect the plane $\{Y=0\}$, the surface \eqref{surface} and, if $p>p_c(\sigma)$, also the cylinder \eqref{iso1} enter into play. More precisely, introducing the region
\begin{equation}\label{regionG}
\mathcal{G}:=\left\{(X,Y,Z)\in\real^3: Y\leq0, \ Z>{\rm sup}(X,Y), \ Z>-\frac{N+\sigma}{N-2}Y(mY+N-2)\right\},
\end{equation}
we deduce that the trajectories emanating from $P_0$ on its two-dimensional unstable manifold pass from the region $\mathcal{F}$ into the region $\mathcal{G}$ as crossing the plane $\{Y=0\}$, and according to the direction of the flow of the system \eqref{PSsyst} on its walls given by \eqref{flow.cyl} and \eqref{flow.surf} if \eqref{limitsurf} is in force (and also on the region contained in $\{Y=0\}$ equal to $\mathcal{F}\cap\mathcal{G}$, since $X-Z<0$ while $Z>{\rm pln}(X,0)$), these trajectories will stay in the region $\mathcal{G}$ at least until crossing the plane $\{Y=-(\sigma+2)/(p-m)\}$. Lemma \ref{lem.noret} then entails that, if a trajectory escapes $\mathcal{G}$ by crossing the latter plane, it will connect to $Q_3$ and not to $Q_1$.

\medskip

It thus remains to show that there is no way to enter the critical point $Q_1$ from the region $\mathcal{G}$ defined in \eqref{regionG}. To this end, we recall that the center manifolds on which any trajectory might reach $Q_1$ have the local Taylor expansion \eqref{cmf} in the variables $(x,y,z)$ introduced in \eqref{change2}. In order to compare, we have to pass the region $\mathcal{G}$ into the same variables, and in particular, we infer from \eqref{surface} and \eqref{regionG} that
\begin{equation}\label{interm19}
-py^2+(N+\sigma)\left[-x+\frac{p-m}{N(\sigma+2)}\right]y+\frac{N+\sigma}{N}x-xz<0,
\end{equation}
or equivalently, by solving \eqref{interm19} in terms of $y$ and performing a Taylor development up to the second order,
\begin{equation}\label{cmf1}
\begin{split}
y<y_{\rm sup}&:=-\frac{\sigma+2}{p-m}x+\frac{N(\sigma+2)^2(m(N+\sigma)-p(N-2))}{(N+\sigma)(p-m)^3}x^2\\&+\frac{(\sigma+2)N}{(N+\sigma)(p-m)}xz+o(|(x,z)|^2)
\end{split}
\end{equation}
in a neighborhood of $(x,y,z)=(0,0,0)$. We also infer from \eqref{cmf} that, on the trajectories entering $Q_1$, one has
\begin{equation}\label{cmf2}
\begin{split}
y=y(Q_1)&:=-\frac{\sigma+2}{p-m}x+\frac{(\sigma+2)^2(m(N+\sigma)-p(N-2))}{(p-m)^3}x^2\\&+\frac{\sigma+2}{p-m}xz+o(|(x,z)|^2).
\end{split}
\end{equation}
Notice next that \eqref{cmf1} and \eqref{cmf2} imply
\begin{equation}\label{interm20}
\begin{split}
y(Q_1)-y_{\rm sup}&=\frac{\sigma x}{N+\sigma}\left[\frac{(\sigma+2)^2(m(N+\sigma)-p(N-2))}{(p-m)^3}x+\frac{\sigma+2}{p-m}z\right]+o(|(x,z)|^2)\\
&=\frac{\sigma(\sigma+2)x^2}{(N+\sigma)(p-m)}(Z-Z(P_2))+o(|(x,z)^2|)>0,
\end{split}
\end{equation}
where we have used the fact that $Z=z/x$, as anyway $Z(P_2)\leq0$ if $p_F(\sigma)<p\leq p_c(\sigma)$ according to \eqref{zp2} and $Z>Z(P_2)$ inside the region $\mathcal{G}$ according to the cylinder \eqref{iso1}, if $p_c(\sigma)<p<p_s(\sigma)$. We then derive from \eqref{interm20} that $y<y_{\rm sup}<y(Q_1)$ in a neighborhood of $Q_1$ and inside the region $\mathcal{G}$, for any $\sigma>0$, which shows that no trajectories can enter the critical point $Q_1$ throughout $\mathcal{G}$. This, together with Lemma \ref{lem.noret}, establish the non-existence of trajectories connecting $P_0$ to $Q_1$ for $p_F(\sigma)<p<p_s(\sigma)$, provided \eqref{limitsurf} is in force.

\medskip

\noindent \textbf{Step 3: End of the proof}. On the one hand, we have established in Step 1 of the current proof that non-existence of self-similar blow-up profiles to Eq. \eqref{eq1} holds true for $m<p\leq p_F(\sigma)$, provided $\sigma>(mN+2)/(m-1)$. But in reality, we have a bit more: provided $\sigma>N(m-1)/(m+1)$, we have established in fact non-existence for
$$
m<p\leq p_1(\sigma)=m+\frac{(\sigma+2)[\sigma(m+1)-N(m-1)]}{N(N+2\sigma+2)},
$$
and if $\sigma>\sigma^*$ we have $p_1(\sigma)>p_F(\sigma)$, which is also needed in order to match this range with the outcome of the second step (that cannot be improved below the exponent $p_F(\sigma)$). On the other hand, we have covered in Step 2 the range $p_F(\sigma)<p<p_s(\sigma)$ if \eqref{limitsurf} holds true, which in particular involves another dependence of $p$ in terms of $\sigma$, namely
$$
p>p_2(\sigma)=\frac{(N+\sigma)(m-1)}{2\sigma}.
$$
The proof is completed if we show that, at least for $\sigma$ sufficiently large, we have $p_2(\sigma)<p_1(\sigma)$. We compute
$$
p_1(\sigma)-p_2(\sigma)=\frac{(N^2+2N\sigma+2\sigma^2+2N+4\sigma)(\sigma(m+1)-N(m-1))}{2N(N+2\sigma+2)\sigma}
$$
and observe that $p_1(\sigma)>p_2(\sigma)$ if $\sigma>N(m-1)/(m+1)$. Moreover, since
\begin{equation}\label{interm23}
\frac{mN+2}{m-1}-\frac{N(m-1)}{m+1}=\frac{N(3m-1)+2(m+1)}{(m-1)(m+1)}>0,
\end{equation}
we infer that the whole interval $p\in(m,p_s(\sigma))$ is covered, provided $\sigma>\sigma^*$ with $\sigma^*$ introduced in \eqref{sigmastar}, completing the proof of Theorem \ref{th.nonexist}.

We end this proof with a plot in Figure \ref{fig3} of the result of a numerical experiment suggesting how all the trajectories $l_C$ cross the no-return plane $\{Y=-(\sigma+2)/(p-m)\}$ (according to Lemma \ref{lem.noret}), as a visual image of how non-existence takes place.

\begin{figure}[ht!]
  % Requires \usepackage{graphicx}
  \begin{center}
  \includegraphics[width=11cm,height=7.5cm]{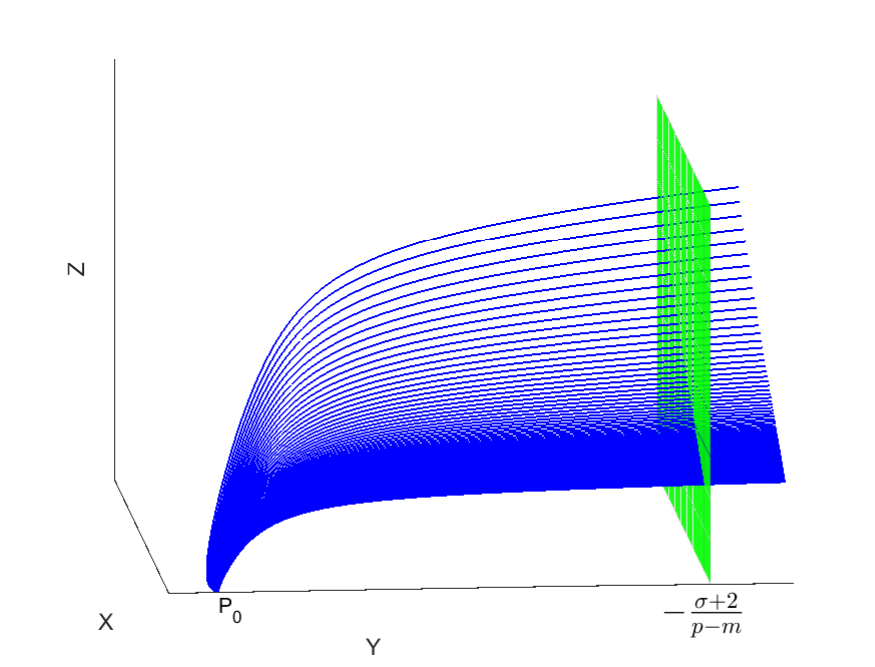}
  \end{center}
  \caption{Trajectories $l_C$ crossing the plane $\{Y=-(\sigma+2)/(p-m)\}$ in the non-existence range. Experiments for $m=2$, $N=5$, $p=2.1$, $\sigma=1$}\label{fig3}
\end{figure}

\bigskip

\noindent \textbf{Acknowledgements} R. G. I. and A. S. are partially supported by the Project PID2020-115273GB-I00 and by the Grant RED2022-134301-T funded by MCIN/AEI/10.13039/ \\ 501100011033 (Spain).

\bigskip

\noindent \textbf{Data availability} Our manuscript has no associated data.

\bigskip 

\noindent \textbf{Conflict of interest} The authors declare that there is no conflict of interest.

\bibliographystyle{plain}

\end{document}